%% file: DLSRMStationaryn-20260914V54-b.tex
\documentclass[a4paper,12pt]{article}
\usepackage{dsfont}
\usepackage{amssymb}
\usepackage{latexsym}
\usepackage{amsmath}
\usepackage{color}
\usepackage{comment}
\usepackage{amsthm}
\usepackage[dvips]{graphicx}
\usepackage{layout} 
\usepackage{ulem}
\usepackage{enumerate}
\usepackage{bm}
\usepackage{bbm} 
\usepackage[bbgreekl]{mathbbol} 
\usepackage{authblk}
\usepackage{setspace} 
\newif\ifrs
\rstrue
\ifrs \usepackage{mathrsfs} \fi  
\newif\ifcol
\coltrue 

\newtheorem{theorem}{Theorem}[section]

\newtheorem{lemma}[theorem]{Lemma}

\newtheorem{proposition}[theorem]{Proposition}

\newtheorem{remark}[theorem]{Remark}
\newtheorem{example}[theorem]{Example}
\numberwithin{equation}{section}
\newtheorem{theorem*}{Theorem}
\newtheorem{ass*}[theorem*]{Assumption}
\newtheorem{note*}[theorem*]{Note}
\newtheorem{lemma*}[theorem*]{Lemma}
\newtheorem{definition*}[theorem*]{Definition}
\newtheorem{proposition*}[theorem*]{Proposition}
\newtheorem{corollary*}[theorem*]{Corollary}
\newtheorem{remark*}[theorem*]{Remark}
\newtheorem{example*}[theorem*]{Example}
\numberwithin{equation}{section}
\newcommand{\varep}{\varepsilon}

\newcommand{\constad}{c_x}

\def\tm{{t_-}}
\def\DL{0}
\def\sfB{{\bbB}}
\newcommand{\tsubo}{0}
\newcommand\ttU{{\tt U}}
\newcommand\bbd{{\mathbbm d}}

\newcommand\Csa{C_1}
\newcommand\Csb{C_2}
\newcommand\Csc{C_3}
\newcommand\Csd{C_4}
\newcommand\Cse{C_5}
\newcommand\Csf{C_6}
\newcommand\Csg{C_7}
\newcommand\Csh{C_8}
\newcommand\Csi{C_{58}}
\newcommand\csj{c_{59}}
\newcommand\Csk{C_{18}}
\newcommand\Csl{C_{11}}
\newcommand\Csm{C_{12}}
\newcommand\Csn{C_{13}}
\newcommand\Cso{C_{14}}
\newcommand\Csp{C_{9}}
\newcommand\Csq{C_{15}}
\newcommand\Csr{C_{16}}
\newcommand\Css{c_{22}}
\newcommand\Cst{c_{18}}
\newcommand\Csu{C_{5}}
\newcommand\Csv{C_{20}}
\newcommand\Csw{C_{21}}
\newcommand\Csy{C_{23}}
\newcommand\Csz{C_{24}}
\newcommand\Csaa{C_{25}}
\newcommand\Csab{C_{4}}
\newcommand\Csac{C_{6}}
\newcommand\Csad{C_{8}}
\newcommand\Csae{C_{12}}
\newcommand\Csaf{c_{11}}
\newcommand\Csag{C_{13}}
\newcommand\Csah{C_{7}}
\newcommand\Csai{C_{9}}
\newcommand\Csaj{C_{3}}
\newcommand\Csak{C_{1}}
\newcommand\Csal{C_{14}}
\newcommand\Csam{C_{37}}
\newcommand\Csan{C_{38}}
\newcommand\Csao{C_{10}}
\newcommand\Csap{C_{40}}
\newcommand\Csaq{C_{41}}
\newcommand\Csar{C_{16}}
\newcommand\Csas{c_{23}}
\newcommand\Csat{C_{25}}
\newcommand\Csau{C_{45}}
\newcommand\Csaw{C_{47}}
\newcommand\Csax{C_{15}}
\newcommand\Csay{C_{19}}
\newcommand\Csaz{C_{20}}
\newcommand\Csba{C_{27}}
\newcommand\Csbb{C_{26}}
\newcommand\Csbc{C_{2}}
\newcommand\Csbd{C_{54}}
\newcommand\Csbe{C_{55}}
\newcommand\Csbf{C_{56}}
\newcommand\Csbg{C_{57}}
\newcommand\Csbh{C_{17}}
\newcommand\Csbi{C_{21}}
\newcommand\Csbj{C_{25}}
\newcommand\Csbk{c_{24}}
\newcommand\Csbl{C_{28}}

\newcommand\constzz{{C_{*}}}

\newcommand\sfq{{\sf q}}
\newcommand\bfR{{\bf R}}
\newcommand\ttT{{\tt T}}

\newcommand\onecalw{\calw}
\newif\ifcol
\colfalse
\ifcol
\newcommand{\colorr}{\color{black}}
\newcommand{\colorg}{\color[rgb]{0,0.5,0}}
\newcommand{\colorb}{\color[rgb]{0,0,0.8}}

\newcommand{\colorn}{\color[rgb]{1,1,1}}

\newcommand{\colory}{\color{yellow}}
\newcommand{\coloroy}{\color[rgb]{1,0.95,0}}

\newcommand{\colorro}{\color[rgb]{0.851,0.255,0.467}} 

\else
\newcommand{\colorb}{\color{black}}

\newcommand{\colorr}{\color{black}}
\newcommand{\colorg}{\color{black}}

\newcommand{\colorn}{\color{black}}
\newcommand{\colory}{\color{black}}

\newcommand{\coloroy}{\color{black}}
\newcommand{\colorro}{\color{black}}
\fi
\newif\ifcol
\colfalse
\ifcol
\newcommand{\cred}{\color[rgb]{0.8,0,0}}

\newcommand{\cblue}{\color[rgb]{0,0,0.8}}
\else
\newcommand{\cred}{\color{black}}
\newcommand{\cblue}{\color{black}}
\fi
\newif\ifcol
\colfalse
\ifcol

\newcommand{\tred}{\color[rgb]{0.8,0,0}}

\else
\newcommand{\tred}{\color{black}}
\fi
\newif\ifcol
\colfalse
\ifcol

\newcommand{\fred}{\color[rgb]{0.8,0,0}}

\else
\newcommand{\fred}{\color{black}}
\fi
\newif\ifcol
\coltrue
\ifcol

\else
\fi
\newif\ifcol
\colfalse
\ifcol

\newcommand{\ared}{\color[rgb]{0.8,0,0}}

\newcommand{\ablue}{\color[rgb]{0,0,0.8}}

\else
\newcommand{\ared}{\color{black}}
\newcommand{\ablue}{\color{black}}
\fi
\newif\ifcol
\colfalse
\ifcol

\newcommand{\bred}{\color[rgb]{0.8,0,0}}

\else
\newcommand{\bred}{\color{black}}
\fi
\excludecomment{en-text}
\includecomment{jp-text}
\includecomment{comment}
\input nakamacro300823-300916+.tex

\newcommand{\ol}{\overline}

\renewcommand{\colory}{\color{yellow}}
\renewcommand{\koko}{{\colory koko}}
\newcommand{\kokokara}{{\colory kokokara}}
\newcommand{\wh}{\widehat}
\newcommand{\wt}{\widetilde}

\newcommand{\gray}{\color[rgb]{0.5,0.5,0.5}}

\excludecomment{en-text}
\includecomment{jp-text}
\includecomment{comment}

\newcounter{constant}

\newcommand{\cst}{\stepcounter{constant} \ensuremath{{\tt C}_{\theconstant}}}

\begin{document}

\title{
{\cred Risk comparison theorems and application to deep learning of diffusion coefficients}
\footnote{
This work was in part supported by 
Japan Science and Technology Agency CREST JPMJCR2115;  
Japan Society for the Promotion of Science Grants-in-Aid for Scientific Research 
No. 23H03354 (Scientific Research);  
and by a Cooperative Research Program of the Institute of Statistical Mathematics. 
Sincere gratitude goes to Mr. Yoshito Date (EMC Healthcare Co., Ltd.), Mr. Naokatsu Hasegawa (Yayoi Kogyo Co., Ltd.), and Mr. Masaki Nonaka (TAUNS Laboratories, Inc.) for their valuable support.
}
}
\author[1]{Arnaud Gloter}
\author[2,3]{Nakahiro Yoshida}
\affil[1]{Laboratoire de Math\'ematiques et Mod\'elisation d'Evry, Universit\'e \'Evry Paris-Saclay
	\footnote{
		Laboratoire de Math\'ematiques et Mod\'elisation d'Evry, CNRS, Univ Evry, 
		Universit\'e Evry Paris-Saclay, 91037, Evry, France. e-mail: arnaud.gloter@univ-evry.fr}
}
\affil[2]{Graduate School of Mathematical Sciences, University of Tokyo
\footnote{Graduate School of Mathematical Sciences, University of Tokyo: 3-8-1 Komaba, Meguro-ku, Tokyo 153-8914, Japan. e-mail: nakahiro@ms.u-tokyo.ac.jp}
        }
\affil[3]{The Institute of Statistical Mathematics
        }
\maketitle
\ \\
{\it Summary} \ 
{\cred
We investigate the nonparametric estimation of the diffusion matrix in stochastic differential equations featuring multidimensional, strong mixing covariate processes. We propose a flexible statistical framework based on general function classes that does not require a linear basis representation, rendering our results directly applicable to deep neural network estimators. Our approach employs a two-step estimation procedure: constructing a preliminary 
nonparametric 
quasi-likelihood estimator and subsequently regularizing it via a $\beta$-H\"older class approximation. We establish general risk comparison theorems between empirical and generalization risks for arbitrary estimators without relying on a specific probabilistic structure of the underlying process. 
{\fred In diffusion matrix learning based on \(n + 1\) observations over the time interval \([0,T]\), the derived upper bounds capture the intrinsic interplay between the \(T\)-rate, associated with the mixing behavior of the covariate process, and the intrinsic \(n\)-rate governing the volatility estimation.
}%
%
}
\ \\
\ \\
{\it Keywords and phrases} \ 
{\cred
Nonparametric estimation, Diffusion matrix, High-frequency data, Deep neural networks, Generalization error, Risk comparison theorems, Strong mixing, Minimax optimality.
}
\ \\

\section{Introduction}

This work concerns the nonparametric estimation of the diffusion coefficient  in stochastic differential equations. We consider the model 
\begin{equation*}
	dY_t=b(X_t) dt + \sigma(X_t) dw_t,
\end{equation*}
where $Y{\cred\>=(Y_t)_{t\in\bbR_+}}$ and $X{\cred\>=(X_t)_{t\in\bbR_+}}$ are multidimensional processes and $w{\cred\>=(w_t)_{t\in\bbR_+}}$ is a {\cred multidimensional} Brownian motion. 
The process $X$ is assumed to {\cred satisfy a strong mixing condition}  
and represents a collection of covariates affecting the volatility of the process $Y$. These covariates may be endogenous or may include components of the process $Y$.  In particular, this framework encompasses the classical diffusion setting for $X=Y$. We assume that high-frequency observations $(X_\tj,Y_\tj)_{j=0,...,n}$
are available, with $h_n\to0$ and ${\cred T\equiv\>}T_n=nh_n\to\infty$. Our objective is to estimate the diffusion matrix $S(x)=\sigma\sigma^\star(x)$ on a compact subset of the state space of $X$.

In the parametric situation, the estimation of the diffusion coefficients from high-frequency observations has been extensively studied; see, for example, 
{\cred 
\cite{genon-catalotEstimationDiffusionCoefficient1993,ogihara2014quasi,uchida2013quasi} for estimation over a finite time horizon, 
 \cite{kesslerEstimationErgodicDiffusion1997,
uchida2012adaptive,yoshidaEstimationDiffusionProcesses1992b} for ergodic diffusion processes, 
\cite{uchida2014adaptive,yoshida2011polynomial} for Bayesian analysis, 
\cite{inatsugu2021global,ogihara2011quasi,shimizu2006estimation} for jump diffusion processes, 
and 
 \cite{gloter2021adaptive,gloter2024non} for degenerate diffusion processes. 
 }%

%
{\cred 
The nonparametric estimation of $S$ is addressed, 
when the process is observed over a fixed interval of time $[0,1]$, in the references \cite{bandiFullyNonparametricEstimation2003, florensEstimationDiffusionCoefficient1998, hoffmannAdaptiveEstimationDiffusion1999a, hoffmannEstimationDiffusion1999, jacodNonparametricKernelEstimation2000}. Since the diffusion coefficient can only be estimated 
in regions visited by the process $X$, these approaches heavily rely on the local time of the process. Consequently, they are inherently confined to one-dimensional diffusions. In this setting, the minimax rate of estimation is $n^{\beta/(1+2\beta)}$, where $\beta$ denotes the smoothness of $S$ (see \cite{hoffmannAdaptiveEstimationDiffusion1999a}).
}

The estimation of the diffusion coefficient for ergodic diffusion processes observed over a long time horizon was studied by \cite{comtePenalizedNonparametricMean2007c}. This framework has subsequently been extended to noisy observations in \cite{schmisserNonparametricEstimationDiffusion2012a}, and to diffusions with jumps in \cite{schmisserNonParametricEstimation2019b} and \cite{amorinoNonparametricInferenceCoefficients2022}.

Recently, statistical inference based on repeated independent diffusion trajectories has attracted growing interest (see, e.g., \cite{comteNonparametricDriftEstimation2020b, genon-catalotEstimationStochasticDifferential2016}). In this setting, the nonparametric estimation of the diffusion coefficient has been investigated by \cite{denisNonparametricPluginClassifier2024} and \cite{ella-mintsaNonparametricEstimationDiffusion2024a}. %
{\cred
It should be stressed that, as far as volatility estimation is concerned, the underlying process is confined to the one-dimensional case in all of these references.}

%
{\cred 
In the present work, we aim to accommodate a more general structure for the process $Z=(X,Y)$ and, in particular, consider multidimensional covariate processes. Moreover, our statistical analysis is formulated for general classes of approximation functions, rendering it readily applicable to deep neural network estimators of the diffusion coefficient.
}

%
{\cred 
It is known that in high-dimensional contexts, a representation by neural networks tends to mitigate the curse of dimensionality in regression problems. It was shown in \cite{schmidt2020nonparametric} that a ReLU (Rectified Linear Unit) deep neural network can achieve a faster rate of convergence than wavelet estimators under structural constraints on the regression function. Approximation properties of neural networks and their statistical implications have been extensively studied in recent years; see, among others,   
\cite{bauerDeepLearningRemedy2019, 
kohlerRateConvergenceFully2021, okoDiffusionModelsAre2023, suzukiAdaptivityDeepReLU2018a, suzukiDeepLearningAdaptive2021}.
A survey of recent developments in deep learning can be found in \cite{devoreNeuralNetworkApproximation2021, fanSelectiveOverviewDeep2021, suhSurveyStatisticalTheory2025}.
}

{\cred Theoretical guarantees for the generalization error of deep neural networks in the context of estimation of stochastic processes remain relatively limited. Some results for times series are established in 
\cite{kengneExcessRiskBound2025, kohlerRateConvergenceDeep2023, kurisuAdaptiveDeepLearning2025}
(see also reference therein).  
Nonparametric estimation of the drift function using deep neural networks has been considered for ergodic diffusion processes in \cite{digregorioNeuralDriftEstimation2025, ogaDriftEstimationMultidimensional2024}, while that for repeated diffusion processes has been investigated in \cite{zhaoDriftEstimationDiffusion2026}. 
Additionally, deep learning approaches for point processes have been explored in \cite{gyotoku2025deep}.}

{\cred
The ultimate goal of this study is to establish such a theoretical framework for estimating the diffusion matrix. Let $\mathfrak{F}_n$ denote a class of candidate functions for the estimation of $S$. This class may, in particular, consist of the outputs generated by a neural network architecture. For any $S \in \mathfrak{F}_n$, we quantify its discrepancy from the true diffusion matrix $S^*$ via a function $U(x,S)$ defined in (\ref{202608011524}). This function satisfies $U(x,S) \asymp \big|S(x)-S^*(x) \big|^2 \onecalw(x)$, where $\onecalw$ is a {\cred non-negative} smooth function with compact support. The approximation error associated with the model class $\mathfrak{F}_n$ is given by $\inf_{S \in \mathfrak{F}_n} n^{-1} E \big[ {\cred \sum_{j=1}^{n} U(X_{t_{j-1}}, S)} \big]$. When the process $X$ is stationary, this quantity is comparable to the $L^2$ error of approximating $S^*$ by the chosen model class $\mathfrak{F}_n$.
}

%
{\cred 
Our estimation procedure consists of two steps. First, we construct a preliminary estimator $\wh{S}_n^\DL$ by minimizing the contrast function $\Phi_n(S,Z)$ defined in \eqref{202505042250}, which is associated with the Gaussian quasi-likelihood of the statistical model. Since the regularity of $\wh{S}_n^\DL$ cannot be guaranteed a priori, we subsequently approximate it by $\widehat{S}_n$, which belongs to a {\cred $\beta$-H\"older class} for some $\beta>0$. 
The corresponding generalization risk is defined as $\ol{\bfR}_n=
E \big[ \Phi_n(\wh{S}_n,\ol{Z})  \big]  - E \big[ \Phi_n(S^*,\ol{Z})  \big] $, 
{\cred where $\ol{Z}=(\ol{X},\ol{Y})$ is an independent copy of the process $Z=(X,Y)$.}
}
{\cred  
For the estimator $\wh{S}_n$, we can also evaluate its distance to $S^*$ through the predictable version of the generalization error, defined as
$\ol{\bfR}_n^p=
n^{-1}E\big[{\cred\sum_{j=1}^{n} U(\ol{X}_\tjm,\wh{S}_n)}\big]$. 
More explicit expressions of $\ol{\bfR}_n$ and $\ol{\bfR}_n^p$ are given in 
(\ref{202603120055}) and (\ref{202608011537}), respectively. 
}

%
{\cred
In Theorem \ref{202602140323}, we establish that the same upper bound holds for the two errors $\overline{\mathbf{R}}_n$ and $\overline{\mathbf{R}}_n^p$. This upper bound reads as
\bea\label{20608011544}
	\Delta_n
	{\cred+\wh{\Delta}_n}
	+\inf_{S\in\mathfrak{F}_n}
	{\cred E\bigg[\frac{1}{n}\sum_{j\in\bbI_n}U(S,X_\tjm)\bigg]}
	+n^{-1}(\log n)\log\caln_n+\delta_n\log n+{\colorro h_n^2},
\eea
where $\caln_n$ denotes the covering number of the class $\mathfrak{F}_n$ by balls of radius $\delta_n$ with respect to the supremum norm, and {\cred$\Delta_n$ and $\wh{\Delta}_n$ are} optimization tolerances, which can be theoretically taken to be $0$ {\cred under perfect optimization}. The validity of this upper bound requires a relationship among $T$, $n$ and $\log\caln_n$, as stated in \eqref{202604131857}. This condition guarantees that $T$ tends to infinity at some minimal rate and is used to compare the generalization errors of the estimator with their empirical counterparts. If this condition is not satisfied, we establish in Proposition \ref{202607131357}
 that an upper bound still holds, but with an additional term $T^{-2\beta/\sfd}$, where $\beta$ represents the smoothness of $S^*$ and $\sfd$ denotes the dimension of $X$. This extra term may dominate when $h_n$ is extremely small, 
and Condition \eqref{202604131857} precisely excludes this regime. 
}

%
{\cred
Such a discrepancy between the empirical and generalization errors does not arise in the neural network estimation of the drift function by \cite{ogaDriftEstimationMultidimensional2024}. This difference appears to result from the interaction between the {\cred${\fred T}$}-rate, which is associated with the mixing behavior of the covariate process $X$, and the intrinsic ${\fred n}$-rate governing the volatility estimation. It should be stressed that the results of \cite{comtePenalizedNonparametricMean2007c} contain a similar kind of constraint on the sampling step, taking the form of a condition between the dimension of the approximation space and {\cred$\sqrt{T}$}.
}

 {\cred
 Standard techniques based on the large deviations of $X$ over the time interval $[0,T]$ only guarantee a generalization error rate of the form $T^{-\kappa}$ for some constant $\kappa \in (0,1)$, because the mixing coefficient of the joint process $(X,Y)$ is at least as large as that of $X$.
However, this bound is suboptimal for the problem at hand.
A key feature in estimating the function $\sigma$ is the interplay between two distinct types of large deviations.
One arises from the short-time movements of the Brownian motions, whereas the other is associated with the mixing covariate process $X$.
Crucially, the temporally local fluctuations of $Y$ carry more information for the estimation of $\sigma$ than the long-term trend governed by $X$.
}

{\cred 

{\cred 
Generally, large deviation estimates are utilized to control the covering number $\mathcal{N}_n$ of the model $\mathfrak{F}_n$. 
Thus, in our problem, the short-time Brownian large deviation plays a crucial role 
{\fred to derive the term $n^{-1}(\log n)\log\caln_n$, }%
which is essential for establishing the bound (\ref{20608011544}). 
This strategy is applied to the estimation of $\mathbf{R}_n^p$ to derive (\ref{20608011544}), where $\mathbf{R}_n^p$ denotes the empirical version of $\overline{\mathbf{R}}_n^p$; that is, $\mathbf{R}_n^p = E[\mathcal{E}_n^p]$ with $\mathcal{E}_n^p$ given in (\ref{202602121114}).
\footnote{This bound follows from Lemmas \ref{202602140253} and \ref{202607131330}. }
}

Consequently, we can establish the bound (\ref{20608011544}) for $\overline{\mathbf{R}}_n^p$ provided that $\overline{\mathbf{R}}_n^p$ is dominated by $\mathbf{R}_n^p$ up to a negligible term. 
Since $\overline{\mathbf{R}}_n^p$ and $\mathbf{R}_n^p$ are characterized by $(\overline{X}_{t_{j-1}})_{j=1,...,n}$ and $(X_{t_{j-1}})_{j=1,...,n}$, respectively, the large deviation associated with $X$ (and its copy $\overline{X}$) plays a primary role in this step. 
To this end, in Section \ref{202606202120}, we establish certain risk comparison theorems to compare $\overline{\mathbf{R}}_n^p$ and $\mathbf{R}_n^p$ solely through the large deviations of $X$ over $[0,T]$. 
These risk comparison theorems generally assert that, regardless of the dependent models considered, the predictable generalization error is dominated by the predictable empirical error up to a residual term, which is controlled by the underlying process.
The bound (\ref{20608011544}) for $\overline{\mathbf{R}}_n$ also follows immediately from the estimate of $\overline{\mathbf{R}}_n^p$.
}

{\cred The organization of this paper is as follows. As noted above, Section \ref{202606202120} establishes comparison results between empirical and generalization risks for arbitrary estimators. This part is general and does not rely on any specific probabilistic structure, such as a diffusion process. Furthermore, we do not assume a linear basis representation for the estimator, making our results highly applicable to neural network estimators. Section \ref{202607151441} presents the model, estimation procedure, and statistical results. Section \ref{202607151444} provides the proofs for the upper bounds on generalization and empirical errors. Finally, Section \ref{202607151446} addresses the application to deep neural network inference, presenting an upper bound in Section  \ref{202607151446a} and exploring the minimax optimality of the estimation procedure in Section \ref{202607151446b}.
}

\section{Risk comparison theorems}\label{202606202120}
In this section, we will provide basic results to bridge the generalized and empirical errors. These results are generic and hopefully  of interest independently from the diffusion process we are finally aiming at in the article.

Let $\cald$ is an open set in $\bbR^\sfd$. 
Let $\beta>0$ and $\ell$ the maximum integer satisfying $\ell<\beta$. 
Denote by $C_b^{\ell,\beta-\ell}(\cald)$ the set of $\ell$-times continuously differentiable function $f$ on $\bbR^\sfd$ such that 
$\|f\|_{\fred C_b^{\ell,\beta-\ell}(\cald)}<\infty$, where 
\bea\label{202603031601}
\|f\|_{C_b^{\ell,\beta-\ell}(\cald)}
&=&
\sum_{i:0\leq i\leq\ell}
\sup_{x\in\cald}|\partial^if(x)|+\sup_{x,y\in\cald,x\not=y}\frac{|\partial^\ell f(x)-\partial^\ell f(y)|}{|x-y|^{\beta-\ell}}. 
\eea
In (\ref{202603031601}), $\partial^if^{(i)}$ denotes the $i$-th derivative (tensor) of $f$. 
The space $C_b^{\ell,\beta-\ell}(\cald)$ is regarded as a measurable space equipped with its Borel $\sigma$-field generated by 
the topology associated with the norm $\|f\|_{C_b^{\ell,\beta-\ell}(\cald)}$.

Suppose that ${\tt K}:\bbR\to\bbR$ is a kernel function of order $\ell$, 
i.e., ${\tt K}$ is continuous and satisfies
\beas &&
\int_\bbR|x|^s|{\tt K}(x)|dx\><\>\infty{\fred\ (s=0,1,...,\ell)}, \quad
\int_{\bbR}{\tt K}(x)dx \yeq 1,\quad
\int_{\bbR}x^{s}{\tt K}(x)dx \yeq
0\ {\fred(s=1,...,\ell)}. 
\eeas
The function ${\tt K}$ can take negative values. 
An example of the kernel ${\tt K}$ of arbitrary order $\ell$ is the super kernel {\colorro ${\tt K}_{\text{super}}:\bbR\to\bbR$} defined by
\beas 
{\colorro{\tt K}_{\text{super}}(z)} &=& \frac{1}{2\pi}\int_\bbR\cos(zu)\big(1-e^{-u^{-2}}\big)du\qquad(z\in\bbR). 
\eeas
%
{\colorro 
Another kernel, which we will use later, is a symmetric kernel ${\tt K}_0\in C^\infty(\bbR;\bbR)$ such that 
its Fourier transform $\wh{{\tt K}}_0\in C^\infty(\bbR;\bbR)$ has a support in $[-1,1]$ and $\wh{{\tt K}}_0(u)=1$ for $u\in[1/2,1/2]$. ${\tt K}_0$ is also a super kernel. 
Moreover, for a fixed $m\in\bbN$, 
define ${\tt K}_1$ as ${\tt K}_1(x)={\tt K}_0(4m x)$. 
}

For a kernel ${\tt K}$ of order $\ell$, 
{\fred define} $\varphi_{d_n}:\bbR^\sfd\to\bbR$ for $d_n>0$ as 
\beas 
\varphi_{d_n}(x)
&=&
d_n^{-\sfd}\prod_{i=1}^\sfd{\tt K}\big(d_n^{-1}x_i\big)
\qquad(x=(x_i)\in\bbR^\sfd)
\eeas
{\colorro Similarly, let
\beas 
\varphi_{0,d_n}(x)
&=&
d_n^{-\sfd}\prod_{i=1}^\sfd{\tt K}_0\big(d_n^{-1}x_i\big)
\qquad(x=(x_i)\in\bbR^\sfd).
\eeas
}

%
%
{\colorb
Let $\psi:\bbR^\sfd\to\bbR$ be a bounded measurable function. 
}%
{\fred Suppose that  $\xi_{n,j}$, $\xi'_{n,j}$ $(j=1,...,n;\>n\in\bbN)$ are $\bbR^\sfd$-valued random variables. }%
{\colorro
Let 
\bea\label{202604111241}
E_n
&=&
E\bigg[n^{-1}\sum_{j=1}^nV_n(\xi'_{n,j}){\colorb \psi}(\xi'_{n,j})\bigg]
-(1+\ep)E\bigg[n^{-1}\sum_{j=1}^nV_n(\xi_{n,j}){\colorb \psi}(\xi_{n,j})\bigg]
\eea
{\fred for $\ep>0$ and a random field $V_n$ on $\bbR^\sfd$. 
}%
Define $P_n(a)$ as 
\beas 
P_n(a)
&=& 
P\bigg[
n^{-1}\sum_{j=1}^n\varphi_{d_n}(\xi'_{n,j}-a){\colorb \psi}(\xi'_{n,j})
-(1+\ep)n^{-1}\sum_{j=1}^n\varphi_{d_n}(\xi_{n,j}-a){\colorb \psi}(\xi_{n,j})>0\bigg].
\eeas
}
{\colorro
Suppose that {\colorro$\hat{\calx}$} is a measurable set in $\bbR^\sfd$. 
%
Moreover, we suppose that ${\tt B}$ is a bounded set in {\colorro$C_b^{\ell,\beta-\ell}(\bbR^\sfd)$}. 
}%
We have the following comparison theorem. 
\begin{theorem}\label{202510111529a}
Suppose that  $C_b^{\ell,\beta-\ell}(\bbR^\sfd)$-valued random variables $V_n$ $(n\in\bbN)$ satisfy
\bd
\im[(i)] $V_n\in{\tt B}$ a.s. for $n\in\bbN$. 
\im[(ii)] $V_n\geq0$ {\colorb a.s.} 
for $n\in\bbN$. 
\ed
Let $(d_n)_{n\in\bbN}$ be a sequence of positive numbers such that $d_n\to0$ as $n\to\infty$. 
Let $\ep\in(0,1]$. 
Then 
\bd
\im[(a)] For $f\in{\tt B}$ and $n\in\bbN$, 
\bea\label{202606100529}
f*\varphi_{d_n}(x)
&=&
\int_{\bbR^\sfd}f(a)\varphi_{d_n}(x-a)da
\yeq
f(x)+{\colorro \ol{\ep}_n(f,x)},
\eea
where the term ${\colorro \ol{\ep}_n(f,x)}$ 
{\colorro satisfies $\sup_{f\in{\tt B},x\in\bbR^\sfd}| \ol{\ep}_n(f,x)|=O(d_n^\beta)$ as $d_n\to0$.}

\im[(b)] 
there exists constants {\fred $C(\hat{\calx},{\tt K})$} 
and $C({\tt B},{\tt K})$ such that 
\bea\label{202510111615}
E_n
&\leq&
{\cred C(\hat{\calx},{\tt K})}{\colorro(\sup_n\|V_n\|_\infty )}{\colorb \|\psi\|_\infty}
d_n^{-\sfd}
\sup_{a\in{\colorro\hat{\calx}}}
{\colorro P_n(a)}
+
C({\tt B},{\tt K}) {\colorb \|\psi\|_\infty}d_n^\beta
\nn\\&&
{\colorro +(2+\ep)d_n^{-\sfd}\|{\tt K}\|_\infty^\sfd {\colorb \|\psi\|_\infty} \int_{(\hat{\calx})^c}V_n(a)da}
\eea
for all $n\in\bbN$. 
%

\im[(c)] {\colorro 
If additionally $\varphi_{d_n}*V_n=V_n$ a.s., then it holds that 
\bea\label{202605180801}
E_n
&\leq&
{\cred C(\hat{\calx},{\tt K})}{\colorro(\sup_n\|V_n\|_\infty )}{\colorb \|\psi\|_\infty}
d_n^{-\sfd}
\sup_{a\in{\colorro\hat{\calx}}}
{\colorro P_n(a)}
\nn\\&&
{\colorro +(2+\ep)d_n^{-\sfd}\|{\tt K}\|_\infty^\sfd 
{\colorb \|\psi\|_\infty}
\int_{(\hat{\calx})^c}V_n(a)da}
\eea
instead of (\ref{202510111615}). 
}

\ed

\end{theorem}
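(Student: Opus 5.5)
Part (a) is the standard kernel approximation argument. Since the kernel is even in the relevant sense only through its moments, I will write $f*\varphi_{d_n}(x)=\int f(x-d_nu)\prod_i{\tt K}(u_i)\,du$ after the substitution $a=x-d_nu$. I then Taylor expand $f(x-d_nu)$ around $x$ to order $\ell-1$ with an integral (or mean value) remainder of order $\ell$. The monomials $u^\alpha$ with $1\le|\alpha|\le\ell$ integrate to zero against $\prod_i{\tt K}(u_i)$, because each such monomial factors as $\prod_i u_i^{\alpha_i}$, and at least one factor has $1\le\alpha_i\le\ell$. For the same reason the $\ell$-th order term can be replaced by $\partial^\ell f(\xi)-\partial^\ell f(x)$. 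The Hölder seminorm then bounds the remainder by $\|f\|_{C_b^{\ell,\beta-\ell}}d_n^{\beta}|u|^{\beta}$, up to a constant. Integrability of $|u|^\beta\prod|{\tt K}(u_i)|$ comes from the moment assumptions up to order $\ell$, together with $\beta-\ell\le1$ and $|u|^\beta\le C(1+|u|^{\ell+1})$. If needed, when $\beta=\ell+1$ one can instead interpolate between the orders $\ell$ and $\ell$ moments. Finally, uniformity over $f\in{\tt B}$ follows because ${\tt B}$ is norm bounded.

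For (b), I will apply (a) pointwise to the random function $V_n$, which gives $V_n(x)=\int V_n(a)\varphi_{d_n}(x-a)da-\ol\ep_n(V_n,x)$. Substituting this into $E_n$ and using $|\psi|\le\|\psi\|_\infty$, the error terms contribute at most $(2+\ep)\|\psi\|_\infty Cd_n^\beta$, which is the $C({\tt B},{\tt K})$ term. By Fubini, the main part is
\[E\Big[\int V_n(a)\,G_n(a)\,da\Big],\qquad G_n(a)=n^{-1}\sum_j\varphi_{d_n}(\xi'_{n,j}-a)\psi(\xi'_{n,j})-(1+\ep)n^{-1}\sum_j\varphi_{d_n}(\xi_{n,j}-a)\psi(\xi_{n,j}).\]
I split the $a$-integral into $\hat\calx$ and $(\hat\calx)^c$. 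On $(\hat\calx)^c$ I use $V_n\ge0$ and $|G_n(a)|\le(2+\ep)d_n^{-\sfd}\|{\tt K}\|_\infty^\sfd\|\psi\|_\infty$, which yields the third term. On $\hat\calx$ I use $V_n\ge0$ to bound $V_nG_n\le V_nG_n^+\le \|V_n\|_\infty\,|G_n(a)|\,1_{\{G_n(a)>0\}}$, and bound $|G_n|$ by the same sup bound. Taking expectation and using Fubini then gives
\[\|V\|_\infty(2+\ep)\|{\tt K}\|_\infty^\sfd\|\psi\|_\infty d_n^{-\sfd}\int_{\hat\calx}P_n(a)\,da.\]
This is bounded by $C(\hat\calx,{\tt K})\cdots d_n^{-\sfd}\sup_{\hat\calx}P_n$ with $C=3\|{\tt K}\|^\sfd_\infty\,\mathrm{Leb}(\hat\calx)$.

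The main obstacle is this last step. It requires $\hat\calx$ to have finite Lebesgue measure, which is implicit in the constant depending on $\hat\calx$; if $\hat\calx$ is unbounded, the constant would be infinite, so I take finiteness as part of the meaning. A further subtlety is that $V_n$ is random and correlated with the events $\{G_n(a)>0\}$, but the sup-norm bound circumvents this dependence. Measurability of $(\omega,a)\mapsto V_n(a)$ follows from the Borel structure and the continuity of evaluation, so Fubini is justified.

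For (c), if $\varphi_{d_n}*V_n=V_n$, then $V_n(x)=\int V_n(a)\varphi_{d_n}(x-a)da$ exactly, so the $\ol\ep_n$ terms vanish. The rest of the argument in (b) is unchanged and gives (\ref{202605180801}).
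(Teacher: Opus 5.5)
Your proposal follows the paper's proof essentially line for line. For (a): the change of variables $a=x-d_nu$, Taylor expansion with the product moments vanishing, and the H\"older bound on $\partial^\ell f$. For (b) and (c): the substitution $V_n=\varphi_{d_n}*V_n-\ol{\ep}_n(V_n,\cdot)$, Fubini, the split of the $a$-integral into $\hat{\calx}$ and $(\hat{\calx})^c$, and the bound $V_nG_n\le V_nG_n^+\le\|V_n\|_\infty(2+\ep)d_n^{-\sfd}\|{\tt K}\|_\infty^\sfd\|\psi\|_\infty 1_{\{G_n(a)>0\}}$, which gives $\mathrm{Leb}(\hat{\calx})\sup_{a\in\hat{\calx}}P_n(a)$. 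You are right that $\mathrm{Leb}(\hat{\calx})<\infty$ is needed and that the sup-norm bound sidesteps the dependence between $V_n$ and $\{G_n(a)>0\}$; both points are implicit in the paper too.

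The one step that fails is your justification of $\int|u|^\beta\prod_i|{\tt K}(u_i)|\,du<\infty$. Moments of order at most $\ell$ do not control a moment of order $\beta>\ell$. The inequality $|u|^\beta\le C(1+|u|^{\ell+1})$ needs an $(\ell+1)$-th moment, which is not assumed. ``Interpolating between orders $\ell$ and $\ell$'' gives nothing beyond order $\ell$.

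Without this moment, the remainder is only bounded by $C\min(|d_nu|^\beta,|d_nu|^\ell)$, which gives $o(d_n^\ell)$ rather than $O(d_n^\beta)$, and (a) genuinely fails. Take $\sfd=1$, $\ell=0$, $\beta=1/2$, ${\tt K}(x)=c(1+|x|)^{-1-\delta}$ with $0<\delta<1/2$, and $f(x)=\min(|x|^{1/2},1)$. Then $f*\varphi_{d}(0)-f(0)\ge\int_{|u|\ge 1/d}{\tt K}(u)\,du\asymp d^{\delta}\gg d^{1/2}$.

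So you must add the hypothesis $\int_{\bbR}|x|^\beta|{\tt K}(x)|\,dx<\infty$; then $|u|^\beta\le C\sum_i|u_i|^\beta$ handles the $\sfd$-dimensional integral. The paper's proof uses exactly this integral without comment. It holds for the super kernel and for the rapidly decreasing ${\tt K}_0$, ${\tt K}_1$ used later, so nothing downstream is affected.

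A smaller point: since $V_n$ is random, the third term your argument (and the paper's) actually produces is $E\big[\int_{(\hat{\calx})^c}V_n(a)\,da\big]$.
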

\proof
We start with the proof of (a) for self-containedness. 
When a function $f:\bbR^\sfd\to\bbR$ is of class $C_b^{\ell,\beta-\ell}$, with multi-index, 
we have 
\beas&&
\bigg|\int_{\bbR^\sfd}\int_0^1\sum_{{\bf k}:|{\bf k}|=\ell}\frac{(1-s)^{\ell-1}}{{\bf k}!}
f^{({\bf k})}\big(x+s(a-x)\big)ds(a-x)^{{\bf k}}ds\varphi_{d_n}(x-a)da\bigg|
\nn\\&=&
\bigg|\int_{\bbR^\sfd}\int_0^1\sum_{{\bf k}:|{\bf k}|=\ell}\frac{(1-s)^{\ell-1}}{{\bf k}!}
\big\{f^{({\bf k})}\big(x+s(a-x)\big)-f^{({\bf k})}(x)\big\}ds(a-x)^{{\bf k}}ds\varphi_{d_n}(x-a)da\bigg|
\nn\\&=&
\bigg|\int_{\bbR^\sfd}\int_0^1\sum_{{\bf k}:|{\bf k}|=\ell}\frac{(1-s)^{\ell-1}}{{\bf k}!}
\big\{f^{({\bf k})}\big(x-d_nsu\big)-f^{({\bf k})}(x)\big\}ds(-d_nu)^{{\bf k}}ds\prod_{i=1}^\sfd{\tt K}(u_i)du\bigg|
\nn\\&\leq&
C_1({\tt B},{\tt K})
d_n^\beta
\int_{\bbR^\sfd}|u|^\beta\prod_{i=1}^\sfd\big|{\tt K}(u_i)\big|du
\eeas
for all $n\in\bbN$, for some constant $C_1({\tt B},{\tt K})$ independent of $n\in\bbN$ and $f\in{\tt B}$. 
Therefore, Taylor's formula gives 
\bea\label{202510091950}&&
\int_{\bbR^\sfd}f(a)\varphi_{d_n}(x-a)da
\nn\\&=&
\int_{\bbR^\sfd}\sum_{{\bf k}:|{\bf k}|\leq\ell-1}\frac{1}{{\bf k}!}f^{({\bf k})}(x)(a-x)^{{\bf k}}\varphi_{d_n}(x-a)da
\nn\\&&
+\int_{\bbR^\sfd}\int_0^1\sum_{{\bf k}:|{\bf k}|=\ell}\frac{(1-s)^{\ell-1}}{{\bf k}!}
f^{({\bf k})}\big(x+s(a-x)\big)ds
(a-x)^{{\bf k}}ds\varphi_{d_n}(x-a)da
\nn\\&=&
f(x)+{\colorro \ol{\ep}_n(f,x)}
\eea
for all $n\in\bbN$, 
where the term ${\colorro \ol{\ep}_n(f,x)}$ is of order $O(d_n^\beta)$ uniformly in $x\in\bbR^\sfd$ and $f\in{\tt B}$, 
{\colorro and obviously ${\colorro \ol{\ep}_n(f,x)}=0$ when $\varphi_{d_n}*f=f$.}
Then 
\beas
E_n
&=&
E\bigg[n^{-1}\sum_{j=1}^nV_n(\xi'_{n,j}){\colorb \psi}(\xi'_{n,j})\bigg]
-(1+\ep)E\bigg[n^{-1}\sum_{j=1}^nV_n(\xi_{n,j}){\colorb \psi}(\xi_{n,j})\bigg]
\nn\\&=&
E\bigg[n^{-1}\sum_{j=1}^n
\int_{\bbR^\sfd}V_n(a)\varphi_{d_n}(\xi'_{n,j}-a){\colorb \psi}(\xi'_{n,j})da\bigg]
\nn\\&&
-(1+\ep)E\bigg[n^{-1}\sum_{j=1}^n
\int_{\bbR^\sfd}V_n(a)\varphi_{d_n}(\xi_{n,j}-a){\colorb \psi}(\xi_{n,j})da\bigg]
+{\colorro \ep_n}
\eeas
and hence 
\beas
E_n
&\leq&
\int_{{\colorro\hat{\calx}}}E\bigg[V_n(a)\bigg\{n^{-1}\sum_{j=1}^n
\varphi_{d_n}(\xi'_{n,j}-a){\colorb \psi}(\xi'_{n,j})
\nn\\&&\hspace{30pt}
-(1+\ep)n^{-1}\sum_{j=1}^n
\varphi_{d_n}(\xi_{n,j}-a){\colorb \psi}(\xi_{n,j})\bigg\}_+\bigg]da
\nn\\&&
{\colorro
+\int_{(\hat{\calx})^c}E\bigg[V_n(a)\bigg\{n^{-1}\sum_{j=1}^n
\varphi_{d_n}(\xi'_{n,j}-a){\colorb \psi}(\xi'_{n,j})
}
\nn\\&&\hspace{30pt}
{\colorro
-(1+\ep)n^{-1}\sum_{j=1}^n
\varphi_{d_n}(\xi_{n,j}-a){\colorb \psi}(\xi_{n,j})\bigg\}_+\bigg]da}
+{\colorro \ep_n}
\eeas
since $V_n\geq0$, 
{\colorro where $\ep_n{\cred \leq C({\tt B},{\tt K}){\colorb \|\psi\|_\infty}d_n^\beta}$ in Case (b) and $\ep_n=0$ in Case (c).} 
Therefore
\beas
E_n
&\leq&
{\fred(2+\ep)}
(\sup_n\|V_n\|_\infty )d_n^{-\sfd}\|{\tt K}\|_\infty^\sfd
\text{Leb}({\colorro{\hat{\calx}}}){\colorb \|\psi\|_\infty}
\nn\\&&
\times
\sup_{a\in{\colorro\hat{\calx}}}
P\bigg[n^{-1}\sum_{j=1}^n
\varphi_{d_n}(\xi'_{n,j}-a){\colorb \psi}(\xi'_{n,j})
-(1+\ep)n^{-1}\sum_{j=1}^n
\varphi_{d_n}(\xi_{n,j}-a){\colorb \psi}(\xi_{n,j})>0\bigg]
\nn\\&&
{\colorro +(2+\ep)d_n^{-\sfd}\|{\tt K}\|_\infty^\sfd 
{\colorb \|\psi\|_\infty}
\int_{(\hat{\calx})^c}V_n(a)da}
+{\colorro \ep_n}. 
\eeas
This shows (\ref{202510111615}) {\colorro and (\ref{202605180801}).}
\qed\halflineskip

{\colorro Let 
\beas 
Q_n(a) 
&=& 
P\bigg[
\bigg|n^{-1}\sum_{j=1}^n\bigg\{\varphi_{d_n}(\xi_{n,j}-a){\colorb \psi}(\xi_{n,j})
-E\big[\varphi_{d_n}(\xi_{n,j}-a){\colorb \psi}(\xi_{n,j})\big]\bigg\}\bigg|
\nn\\&&\hspace{60pt}
{\cred\geq\>}\frac{\ep}{2(1+\ep)}n^{-1}\sum_{j=1}^nE\big[\varphi_{d_n}(\xi_{n,j}-a){\colorb \psi}(\xi_{n,j})\big]
\bigg]
\eeas
}
Theorem \ref{202510111529a} is customized for the generalization error as the following comparison theorem. 
\begin{theorem}\label{202510111529}
Additionally to the conditions in Theorem \ref{202510111529a}, suppose that 
$(\xi_{n,j})_{j=1,...,n}=^d(\xi'_{n,j})_{j=1,...,n}$ for all $n\in\bbN$. Then, for $E_n$ of (\ref{202604111241}), 
\bd
\im[(a)] 
it holds that 
\bea\label{202510112259}
E_n
&\leq&
2{\cred C(\hat{\calx},{\tt K})}{\colorro(\sup_n\|V_n\|_\infty )}{\colorb \|\psi\|_\infty}
d_n^{-\sfd}
\sup_{a\in{\colorro\hat{\calx}}}
{\colorro Q_n(a)}
+C({\tt B},{\tt K}) {\colorb \|\psi\|_\infty}d_n^\beta
\nn\\&&
{\colorro +(2+\ep)d_n^{-\sfd}\|{\tt K}\|_\infty^\sfd 
{\colorb \|\psi\|_\infty}
\int_{(\hat{\calx})^c}V_n(a)da}
\eea
for all $n\in\bbN$. 
The two constants in (\ref{202510112259}) are the same as those of (\ref{202510111615}), respectively. 

\im[(b)] 
{\colorro 
If additionally $\varphi_{d_n}*V_n=V_n$ a.s., then 
\bea\label{202605180826}
E_n
&\leq&
2{\cred C(\hat{\calx},{\tt K})}(\sup_n\|V_n\|_\infty ){\colorb \|\psi\|_\infty}
d_n^{-\sfd}
\sup_{a\in{\colorro\hat{\calx}}}
{\colorro Q_n(a)}
\nn\\&&
{\colorro +(2+\ep)d_n^{-\sfd}\|{\tt K}\|_\infty^\sfd{\colorb \|\psi\|_\infty} \int_{(\hat{\calx})^c}V_n(a)da}
\eea
instead of (\ref{202510112259}). 
}
\ed
\end{theorem}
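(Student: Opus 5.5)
Theorem \ref{202510111529} follows from Theorem \ref{202510111529a}: it suffices to show that $P_n(a)\le 2Q_n(a)$ for every $a\in\hat{\calx}$ and every $n$. Once this holds, inserting it into (\ref{202510111615}) gives (\ref{202510112259}) with the same constants $C(\hat{\calx},{\tt K})$ and $C({\tt B},{\tt K})$, and inserting it into (\ref{202605180801}) gives (\ref{202605180826}) under the extra condition $\varphi_{d_n}*V_n=V_n$. So the work reduces to a pointwise-in-$a$ probability comparison, and the structure of $V_n$ plays no further role.

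To compare $P_n(a)$ with $Q_n(a)$, fix $a$ and write
\[
S'_n=n^{-1}\sum_{j=1}^n\varphi_{d_n}(\xi'_{n,j}-a)\psi(\xi'_{n,j}),\qquad
S_n=n^{-1}\sum_{j=1}^n\varphi_{d_n}(\xi_{n,j}-a)\psi(\xi_{n,j}),
\]
and set $\mu_n=E[S_n]$. The equality in law $(\xi_{n,j})_j=^d(\xi'_{n,j})_j$ gives $E[S'_n]=\mu_n$ and shows that $S'_n-\mu_n$ has the same law as $S_n-\mu_n$. The event $\{S'_n-(1+\ep)S_n>0\}$ can be rewritten as
\[
(S'_n-\mu_n)-(1+\ep)(S_n-\mu_n)>\ep\mu_n .
\]
If this holds, then either $S'_n-\mu_n\ge \frac{\ep}{2}\mu_n$ or $-(1+\ep)(S_n-\mu_n)>\frac{\ep}{2}\mu_n$. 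In the first case $|S'_n-\mu_n|\ge\frac{\ep}{2}\mu_n\ge\frac{\ep}{2(1+\ep)}\mu_n$. In the second case $|S_n-\mu_n|>\frac{\ep}{2(1+\ep)}\mu_n$. A union bound, together with the equality in law of $S'_n$ and $S_n$, then yields $P_n(a)\le 2Q_n(a)$. The threshold in $Q_n(a)$ is exactly $\frac{\ep}{2(1+\ep)}\mu_n$, which is why the second case sets the constant.

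The main obstacle is the sign of $\mu_n$, because $\psi$ and the kernel ${\tt K}$ may take negative values. The split above only uses that the right-hand side $\ep\mu_n$ is divided into two pieces $\frac{\ep}{2}\mu_n$, so the union-bound step is valid for any real $\mu_n$. However, the final comparison $\frac{\ep}{2}\mu_n\ge\frac{\ep}{2(1+\ep)}\mu_n$ in the first case needs $\mu_n\ge0$. When $\mu_n<0$, one instead splits $\ep\mu_n$ as $\frac{\ep}{2(1+\ep)}\mu_n+\frac{\ep(1+2\ep)}{2(1+\ep)}\mu_n$ and checks that the first-case event still implies $|S'_n-\mu_n|\ge\frac{\ep}{2(1+\ep)}\mu_n$. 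This is trivially true when the threshold is negative, since then $Q_n(a)=1$ and the bound $P_n(a)\le 1\le 2Q_n(a)$ holds outright. I would therefore handle $\mu_n<0$ by this trivial bound and run the argument above only for $\mu_n\ge0$. Finally, taking $\sup_{a\in\hat{\calx}}$ on both sides of $P_n(a)\le2Q_n(a)$ completes the proof.
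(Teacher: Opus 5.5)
Your proposal is correct and follows the paper's own argument. Both reduce the claim to the pointwise bound $P_n(a)\le 2Q_n(a)$ through a two-event union bound that uses the equality in law of the two empirical sums. Both also dispose of a nonpositive mean by noting that $Q_n(a)=1$ there. The only difference is where the threshold is split: you halve $\epsilon\mu_n$, whereas the paper thresholds $M'-E[M']$ directly at $\frac{\epsilon}{2(1+\epsilon)}E[M]$ and obtains the larger threshold $\frac{\epsilon(1+2\epsilon)}{2(1+\epsilon)^2}E[M]$ for the other event; this does not affect the conclusion.
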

\proof 
For notational simplicity, write 
\beas 
{\tt M}=n^{-1}\sum_{j=1}^n\varphi_{d_n}(\xi_{n,j}-a){\colorb \psi}(\xi_{n,j})\quad\text{and}\quad
{\tt M}'=n^{-1}\sum_{j=1}^n\varphi_{d_n}(\xi'_{n,j}-a){\colorb \psi}(\xi'_{n,j}). 
\eeas
Both ${\tt M}$ and ${\tt M}'$ are depending on $a$. 
We have $E[{\tt M}]=E[{\tt M}']$ and 
\beas &&
{\cred P_n(a)\yeq}
P\big[{\tt M}'-(1+\ep){\tt M}>0\big]
\nn\\&\leq&
P\bigg[{\tt M}'-E[{\tt M}']>\frac{\ep}{2(1+\ep)}E[{\tt M}]\bigg]
+P\bigg[{\tt M}'-E[{\tt M}']\leq\frac{\ep}{2(1+\ep)}E[{\tt M}], \>{\tt M}'-(1+\ep){\tt M}>0\bigg]
\nn\\&\leq&
P\bigg[{\tt M}'-E[{\tt M}']>\frac{\ep}{2(1+\ep)}E[{\tt M}]\bigg]
+P\bigg[(1+\ep){\tt M}-E[{\tt M}']\leq\frac{\ep}{2(1+\ep)}E[{\tt M}]\bigg]
\nn\\&\leq&
P\bigg[{\tt M}-E[{\tt M}]>\frac{\ep}{2(1+\ep)}E[{\tt M}]\bigg]
+P\bigg[{\tt M}-E[{\tt M}]\leq-\frac{\ep(1+2\ep)}{2(1+\ep)^2}E[{\tt M}]\bigg].
\eeas
Due to (\ref{202510111615}), this gives (\ref{202510112259}) when $E[{\tt M}]>0$ for all $a\in\calx$. 
Obviously, 
\beas
E_n
&\leq& 
{\cred C(\hat{\calx},{\tt K})(\sup_n\|V_n\|_\infty )}
{\colorb \|\psi\|_\infty}d_n^{-\sfd}+C({\tt B},{\tt K}){\colorb \|\psi\|_\infty} d_n^\beta
{\colorro +(2+\ep)d_n^{-\sfd}\|{\tt K}\|_\infty^\sfd {\colorb \|\psi\|_\infty}\int_{(\hat{\calx})^c}V_n(a)da}
\eeas
from (\ref{202510111615}), therefore 
(\ref{202510112259}) holds when $E[{\tt M}]\leq0$ for some $a\in{\colorro\hat{\calx}}$
{\colorro since $Q_n(a)=1$}. Thus (\ref{202510112259}) is valid in any case. 
\qed\halflineskip

{\colorro 
Suppose that $\calx$ is a compact set in $\bbR^\sfd$ 
and $\hat{\calx}$ is an open set of $\bbR^\sfd$ such that $\calx{\colorro\subset\hat{\calx}}$. 
Let $m\in\bbN$. 
As before, suppose that  $\xi_n=(\xi_{n,j})_{j=1,...,n}$ and $\xi_n'=(\xi_{n,j}')_{j=1,...,n}$ $(n\in\bbN)$ are 
collections of $\bbR^\sfd$-valued random variables; 
these variables may be correlated with each other. 
The sequence of positive numbers $(d_n)_{n\in\bbN}$ satisfies $\lim_{n\to\infty}d_n=0$. 
%
{\cred Write $[\sfd_D]=\{1,...,\sfd_D\}$. 
Suppose that $D_n=(D_n^{(i)})_{i\in[\sfd_D]}$ is a collection of ${\tt B}$-valued random fields $D_n^{(i)}$ }%
satisfying that $\text{supp }{\cred D_n^{(i)}}\in\calx$ a.s. for every $n\in\bbN$ {\cred and $i\in[\sfd_D]$}. 
We define $\bbR_n(\xi_n)$ as 
\beas 
\bbR_n(\xi_n)
&=& 
E\bigg[n^{-1}\sum_{j=1}^n\big|D_n(\xi_{n,j})\big|^{2m}{\colorb\psi(\xi_{n,j})}\bigg].
\eeas
For $\ep>0$, let 
\bea\label{202606121043}
Q_{1,n}(a) 
&=& 
P\bigg[
\bigg|n^{-1}\sum_{j=1}^n\bigg\{\varphi_{0,d_n/(4m)}(\xi_{n,j}-a){\colorb \psi}(\xi_{n,j})
-E\big[\varphi_{0,d_n/(4m)}(\xi_{n,j}-a){\colorb \psi}(\xi_{n,j})\big]\bigg\}\bigg|
\nn\\&&\hspace{60pt}
{\cred\geq\>}\frac{\ep}{2(1+\ep)}n^{-1}\sum_{j=1}^nE\big[\varphi_{0,d_n/(4m)}(\xi_{n,j}-a){\colorb \psi}(\xi_{n,j})\big]
\bigg]. 
\eea 

We obtain the following comparison theorem for the $L^{2m}$-risks. 
{\fred 
This theorem allows us to replace the time series training data with an independent copy in the risk function.
}
\begin{theorem}\label{202606111044}
{\cred Suppose that $\psi:\bbR^\sfd\to\bbR$ is a nonnegative bounded measurable function. }%
Suppose that $\xi_n=^d\xi_n'$ for every $n\in\bbN$. 
Then there exists a constant $C$ such that 
\bea\label{202606121049}
\bbR_n(\xi_n')
&\leq& 
C
\bigg[
\bbR_n(\xi_n)
+d_n^{2m\beta}
+d_n^{-\sfd}
\sup_{a\in\hat{\calx}}Q_{1,n}(a)
\bigg]
\eea
for all $n\in\bbN$. 
\end{theorem}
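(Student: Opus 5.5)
The plan is to reduce to Theorem \ref{202510111529}(b), with $\ep=1$ and the kernel ${\tt K}_1$. To do so, $|D_n|^{2m}$ is replaced by the $2m$-th power of a band-limited smoothing of $D_n$, for which the reproducing property $\varphi*V_n=V_n$ holds exactly.

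\textbf{Step 1 (smoothing).} Set $\tilde D_n^{(i)}=\varphi_{0,d_n}*D_n^{(i)}$. Since ${\tt K}_0$ is a super kernel, it has vanishing moments of every order. Theorem \ref{202510111529a}(a), applied with ${\tt K}_0$, therefore gives
\[
\sup_{x}|\tilde D_n(x)-D_n(x)|\le C d_n^{\beta}
\]
uniformly over ${\tt B}$. Because $\wh{{\tt K}}_0$ is supported in $[-1,1]$, each $\tilde D_n^{(i)}$ has Fourier transform supported in $[-d_n^{-1},d_n^{-1}]^\sfd$. The function $V_n:=|\tilde D_n|^{2m}$ is a sum of products of $2m$ such functions. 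Its Fourier transform is thus a sum of $2m$-fold convolutions, each supported in $[-2m/d_n,2m/d_n]^\sfd$.

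\textbf{Step 2 (exact reproduction).} The kernel $\varphi_{0,d_n/(4m)}$ has Fourier transform $\prod_i\wh{{\tt K}}_0(d_nu_i/(4m))$. This equals $1$ whenever $|u_i|\le 2m/d_n$, since $\wh{{\tt K}}_0=1$ on $[-1/2,1/2]$. Hence $\varphi_{0,d_n/(4m)}*V_n=V_n$ exactly.

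Note that $\varphi_{0,d_n/(4m)}$ is, up to the harmless constant factor $(4m)^{\sfd}$, the kernel $\varphi_{d_n}$ built from ${\tt K}_1(x)={\tt K}_0(4mx)$. That constant is absorbed into the constants, and the probabilities $Q_{1,n}(a)$ are insensitive to positive rescaling. The remaining hypotheses of Theorem \ref{202510111529a} also hold:
\begin{itemize}
\item $V_n\ge0$;
\item $V_n$ lies in a fixed bounded set of $C_b^{\ell,\beta-\ell}$, because convolution with an $L^1$ kernel preserves the bounds of ${\tt B}$ and the $2m$-th power of a bounded smooth vector is smooth with bounded norms;
\item $\sup_n\|V_n\|_\infty<\infty$.
\end{itemize}
Theorem \ref{202510111529}(b), with $\psi\ge0$ and $\xi_n=^d\xi_n'$, then gives
\[
E\Big[n^{-1}\sum_j V_n(\xi'_{n,j})\psi(\xi'_{n,j})\Big]\le 2E\Big[n^{-1}\sum_jV_n(\xi_{n,j})\psi(\xi_{n,j})\Big]+C d_n^{-\sfd}\sup_{a\in\hat\calx}Q_{1,n}(a)+C d_n^{-\sfd}\int_{\hat\calx^c}V_n(a)\,da .
\]

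\textbf{Step 3 (returning to $D_n$).} By the elementary inequality $|x|^{2m}\le 2^{2m-1}(|y|^{2m}+|x-y|^{2m})$ and Step 1, we have both
\[
|D_n|^{2m}\le C\big(V_n+d_n^{2m\beta}\big)\quad\text{and}\quad V_n\le C\big(|D_n|^{2m}+d_n^{2m\beta}\big).
\]
Apply the first inequality at $\xi'$ on the left-hand side of Step 2 and the second at $\xi$ on the right-hand side. Since $\psi$ is bounded, this yields (\ref{202606121049}), up to the tail term.

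\textbf{Step 4 (tail term; the main obstacle).} Here $\tilde D_n$ is no longer compactly supported, so the tail must be shown to be negligible. Since $\mathrm{supp}\,D_n\subset\calx$ and $\mathrm{dist}(\calx,\hat\calx^c)=:\delta>0$, for $a\in\hat\calx^c$ we have
\[
|\tilde D_n(a)|\le C\int_{|b|\ge \mathrm{dist}(a,\calx)}|\varphi_{0,d_n}(b)|\,db .
\]
${\tt K}_0$ is a Schwartz function, as the inverse Fourier transform of a compactly supported $C^\infty$ function. This bound is therefore $O\big((d_n/\mathrm{dist}(a,\calx))^N\big)$ for every $N$. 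Raising to the power $2m$ and integrating over $\hat\calx^c$, with $N$ large enough for integrability, gives
\[
\int_{\hat\calx^c}V_n\,da=O(d_n^{2mN}).
\]
Choosing $N$ so that $d_n^{-\sfd}d_n^{2mN}\le d_n^{2m\beta}$ absorbs this term into $d_n^{2m\beta}$. The care needed lies in making this decay estimate uniform in $n$ and over ${\tt B}$. By comparison, the Fourier-support bookkeeping of Step 2 is routine.
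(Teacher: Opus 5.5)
Your proposal is correct and follows essentially the paper's own route. You smooth $D_n$ by $\varphi_{0,d_n}$, use the Fourier-support argument to get $\varphi_{0,d_n/(4m)}*|\varphi_{0,d_n}*D_n|^{2m}=|\varphi_{0,d_n}*D_n|^{2m}$ exactly, apply Theorem~\ref{202510111529}(b), return to $D_n$ via $|x|^{2m}\le C(|y|^{2m}+|x-y|^{2m})$ and Theorem~\ref{202510111529a}(a), and kill the tail integral using the rapid decay of ${\tt K}_0$.

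The differences are cosmetic. Fixing $\ep=1$ is harmless: $Q_{1,n}(a)$ is nonincreasing in $\ep$, so your bound implies the one for every $\ep\in(0,1]$. You make explicit the normalization factor $(4m)^{\sfd}$ that the paper glosses over when it uses ${\tt K}_1$. Your pointwise tail bound replaces the paper's Cauchy--Schwarz/H\"older estimate.
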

\proof 
We define $\bbR_{n,d_n}(\xi_n)$ as 
\beas 
\bbR_{n,d_n}(\xi_n)
&=& 
E\bigg[n^{-1}\sum_{j=1}^n\big|\big(\varphi_{0,d_n}*D_n\big)(\xi_{n,j})\big|^{2m}
{\cred \psi(\xi_{n,j})}\bigg]
\eeas

For some constant $C(m)$ depending on $m$, 
\beas 
|x|^{2m} &\leq& C(m)\big(|y|^{2m}+|x-y|^{2m}\big)
\eeas
for all $x,y\in{\cred\bbR^{\sfd_D}}$. 
Therefore, 
\bea\label{202606100824a}
\bbR_n(\xi_n')
&\simleq&
\bbR_{n,d_n}(\xi_n')
+\big(\|D_n-\varphi_{0,d_n}*D_n\|_{L^\infty(\Omega\times\bbR^\sfd{\cred\times[\sfd_D]})}\big)^{2m}
\eea
and 
\bea\label{202606100824}
\bbR_{n,d_n}(\xi_n)
&\simleq&
\bbR_n(\xi_n)
+\big(\|D_n-\varphi_{0,d_n}*D_n\|_{L^\infty(\Omega\times\bbR^\sfd{\cred\times[\sfd_D]})}\big)^{2m}. 
\eea
{\cred Here the non-negativity of $\psi$ is used, and 
the constant in $\simleq$ depends on $\|\psi\|_\infty$. }

For a function $g:\bbR^\sfd\to\bbC$, define $g^\sim:\bbR^\sfd\to\bbC$ as $g^\sim(u)={\fred \ol{g(-u)}}$ for $u\in\bbR^\sfd$. 
Now, {\cred in place of $V_n$, we consider the random function }
\bea\label{202606110815}
{\cred V_{1,n}}(x) &=& \big|\big(\varphi_{0,d_n}*D_n\big)(x)\big|^{2m}
\eea
for $x\in\bbR^\sfd$; $D_n$ is compactly supported, while ${\cred V_{1,n}}$ is not so 
because of the Paley-Wiener theorem. 
%
We denote by ${\mathfrak F}$ and ${\mathfrak F}^{-1}$ the Fourier transform and the Fourier inversion, respectively. 
Then, for $D_n=(D_n^{(i)})_{i\in[\sfd_D]}$, we have 
\beas 
{\cred V_{1,n}}(x) 
&=&
\big|{\mathfrak F}^{-1}{\mathfrak F}\big(\varphi_{0,d_n}*D_n\big)\big|^{2m}(x)
\nn\\&=&
\sum_{i_1,...,i_m}^{\sfd_D}\prod_{k=1}^m
\big|{\mathfrak F}^{-1}{\mathfrak F}\big(\varphi_{0,d_n}*D_n^{(i_k)}\big)\big|^{2}(x)
\nn\\&=&
\sum_{i_1,...,i_m}^{\sfd_D}\prod_{k=1}^m
\big[{\mathfrak F}^{-1}\big\{{\mathfrak F}\big(\varphi_{0,d_n}*D_n^{(i_k)}\big)*\big({\mathfrak F}\big(\varphi_{0,d_n}*D_n^{(i_k)}\big)\big)^\sim\big\}(x)\big]
\nn\\&=&
\sum_{i_1,...,i_m}^{\sfd_D}
{\mathfrak F}^{-1}\bigg[\big\{{\mathfrak F}\big(\varphi_{0,d_n}*D_n^{(i_1)}\big)*\big({\mathfrak F}\big(\varphi_{0,d_n}*D_n^{(i_1)}\big)\big)^\sim\big\}
*\cdots
\nn\\&&\hspace{30pt}
\cdots
*\big\{{\mathfrak F}\big(\varphi_{0,d_n}*D_n^{(i_m)}\big)*\big({\mathfrak F}\big(\varphi_{0,d_n}*D_n^{(i_m)}\big)\big)^\sim\big\}
\bigg](x),
\eeas
therefore, 
the Fourier transform of $V_{\fred 1,n}$ is 
\beas 
\wh{{\cred V_{1,n}}}(u)
&=&
\sum_{i_1,...,i_m}^{\sfd_D}
\big\{{\mathfrak F}\big(\varphi_{0,d_n}*D_n^{(i_1)}\big)*\big({\mathfrak F}\big(\varphi_{0,d_n}*D_n^{(i_1)}\big)\big)^\sim\big\}
*\cdots
\nn\\&&\hspace{30pt}
\cdots
*\big\{{\mathfrak F}\big(\varphi_{0,d_n}*D_n^{(i_m)}\big)*\big({\mathfrak F}\big(\varphi_{0,d_n}*D_n^{(i_m)}\big)\big)^\sim\big\} (u)
\nn\\&=&
\sum_{i_1,...,i_m}^{\sfd_D}
\big(\wh{\varphi_{0,d_n}}(u)\wh{D_n^{(i_1)}}(u)\big)*\big(\wh{\varphi_{0,d_n}}(-u)\wh{D_n^{(i_1)}}(-u)\big)
*\cdots
\nn\\&&\hspace{30pt}
\cdots
*\big(\wh{\varphi_{0,d_n}}(u)\wh{D_n^{(i_m)}}(u)\big)*\big(\wh{\varphi_{0,d_n}}(-u)\wh{D_n^{(i_m)}}(-u)\big)
\eeas
for $u=(u_i)_{i=1,...,\sfd}\in\bbR^\sfd$. 
{\fred Here the hat $\>\wh{\ }\>$ denotes the Fourier transform. }%
Moreover, $\text{supp }\wh{\varphi_{0,d_n}}\subset[-d_n^{-1},d_n^{-1}]^\sfd$ since 
\beas 
\wh{\varphi_{0,d_n}}(u)
&=& 
\prod_{i=1}^\sfd\wh{{\tt K}}_0(d_nu_i). 
\eeas
So, $\text{supp }\wh{V_{\fred 1,n}}\in[-2md_n^{-1},2md_n]^\sfd$. 
On the other hand, 
$
\wh{\varphi_{0,d_n/(4m)}}(u)=1
$ 
for $u\in[-2md_n^{-1},2md_n^{-1}]^\sfd$, due to the definition of ${\tt K}_0$. 
Thus, we see 
\beas 
\wh{\varphi_{0,d_n/(4m)}}(u)\wh{{\cred V_{1,n}}}(u)
&=&
\wh{{\cred V_{1,n}}}(u)
\eeas
for all $u\in\bbR^\sfd$. 
Consequently, 
\bea\label{202606100955}
\varphi_{0,d_n/(4m)}*{\cred V_{1,n}} &=& {\cred V_{1,n}}. 
\eea
Then we obtain Inequality 
(\ref{202605180826}) of Theorem \ref{202510111529}
{\fred with $V_n$ replaced by $V_{1,n}$ }%
given by (\ref{202606110815}), the kernel ${\tt K}_1$ for ${\tt K}$, and 
{\cred 
\beas 
\check{{\tt B}}
&=&
\bigg\{\bigg(\sum_{i=1}^{\sfd_D}\big(f^{(i)}\big)^2\bigg)^m;\>f^{(i)}\in\check{\tt B}_0\ (i\in[\sfd_D])\bigg\}
\eeas
in place of the original ${\tt B}$, where 
\beas 
{\fred\check{
{\tt B}}}_0 
&=& 
\bigg\{f\in C_b^{\ell,\beta-\ell}(\bbR^\sfd);\>\|f\|_{C_b^{\ell,\beta-\ell}(\bbR^\sfd)}\leq
\bigg(\int_\bbR\big|{\tt K}_0(y)\big|dy\bigg)^\sfd
{\>\fred\sup_{g\in{\tt B}}\|g\|_{C^{\ell,\beta-\ell}_b(\bbR^\sfd)}}
\bigg\}.
\eeas
The set $\check{{\tt B}}$ is a bounded subset of $C_b^{\ell,\beta-\ell}(\bbR^\sfd)$. 
}
More precisely, for $V_{1,n}(x):= \big|\big(\varphi_{0,d_n}*D_n\big)(x)\big|^{2m}$, it holds that 
\bea\label{202606120841}
E_{1,n}
&\leq&
2{\cred C(\hat{\calx},{\tt K}_1)}(\sup_n\|V_{1,n}\|_\infty ){\colorb \|\psi\|_\infty}
d_n^{-\sfd}
\sup_{a\in{\colorro\hat{\calx}}}
{\colorro Q_{1,n}(a)}
\nn\\&&
{\colorro +(2+\ep)d_n^{-\sfd}\|{\tt K}_1\|_\infty^\sfd {\colorb \|\psi\|_\infty}\int_{(\hat{\calx})^c}V_{1,n}(a)da},
\eea
where 
\beas 
E_{1,n}
&=&
\bbR_{n,d_n}(\xi_n')
-(1+\ep)\bbR_{n,d_n}(\xi_n)
\eeas
and 
$Q_{1,n}(a)$ is given by (\ref{202606121043}).

%
%
%
We have 
\beas &&
\int_{a\in(\hat{\calx})^c}V_{1,n}(a)da
\yeq
\int_{a\in(\hat{\calx})^c}
\big|\big(\varphi_{0,d_n}*D_n\big)(a)\big|^{2m}da
\nn\\&=& 
\int_{a\in(\hat{\calx})^c}
\bigg|
\int_\calx\varphi_{0,d_n}(a-x)D_n(x)dx\bigg|^{2m}da
\quad{\cred (\text{supp }D_n^{(i)}\in\calx\text{ a.s.})}
\nn\\&\leq&
\int_{a\in(\hat{\calx})^c}
\bigg(\int_{\calx}\big|\varphi_{0,d_n}(a-x)\big|^2dx\bigg)^m
\bigg(\int_{\calx}\big|D_n(x)\big|^{{\cred2}}dx\bigg)^mda
\nn\\&\leq&
C({\tt B},m{\fred, \sfd_D})\text{Leb}(\calx)^{{\fred2}m-1}
\int_{a\in(\hat{\calx})^c}
\int_{\calx}\big|\varphi_{0,d_n}(a-x)\big|^{2m}dxda
\nn\\&\leq&
C({\tt B},m{\fred, \sfd_D})\text{Leb}(\calx)^{{\fred2}m-1} \sup_{a\in(\hat{\calx})^c,\>x\in\calx}\big|\varphi_{0,d_n}(a-x)\big|^{2m-1}
\int_{\calx}\int_{a\in(\hat{\calx})^c}\big|\varphi_{0,d_n}(a-x)\big|dadx
\nn\\&\leq&
C({\tt B},m{\fred, \sfd_D})\text{Leb}(\calx)^{{\fred2}m} \sup_{a\in(\hat{\calx})^c,\>x\in\calx}\big|\varphi_{0,d_n}(a-x)\big|^{2m-1}
\int_{y=(y_i)\in\bbR^\sfd}\prod_{i=1}^\sfd|{\tt K}_0(y_i)|dy
\eeas
Since ${\tt K}_0$ is rapidly decreasing, 
$\int_{y=(y_i)\in\bbR^\sfd}\prod_{i=1}^\sfd|{\tt K}_0(y_i)|dy<\infty$, besides, 
for any $L>0$, there exists a constant $C({\tt K}_0,L)$ such that 
\beas
\sup_{a\in(\hat{\calx})^c,\>x\in\calx}\big|\varphi_{0,d_n}(a-x)\big|
&\leq&
C({\tt K}_0,L,\calx,\hat{\calx})d_n^L
\eeas
For {\cred$V_{1,n}$} of (\ref{202606110815}), we conclude that 
there exists a constant $C({\tt B}, m, {\tt K}_0,\calx,\hat{\calx}{\fred, \sfd_D})$ such that 
\bea\label{202606121051}
\int_{(\hat{\calx})^c}V_{1,n}(a)da
&\leq&
C({\tt B}, m, {\tt K}_0,\calx,\hat{\calx}{\fred, \sfd_D})d_n^{2m\beta+\sfd}
\eea
for all $n\in\bbN$. 

Theorem \ref{202510111529a} (a) gives the estimate 
\bea\label{202606121057}
\|D_n-\varphi_{0,d_n}*D_n\|_{L^\infty(\Omega\times\bbR^\sfd{\cred\times[\sfd_D]})}
&=& 
O(d_n^\beta)
\eea
since {\cred each $D_n^{(i)}$} takes values in ${\tt B}$ a.s. 

Now we obtain (\ref{202606121049}) from 
{\cred(\ref{202606100824a})}, (\ref{202606100824}), (\ref{202606120841}), (\ref{202606121051}) and (\ref{202606121057}). 
\qed\halflineskip

Later, we will use $D_n(x)$ for the components of $\big(\wh{S}_n(x)-S^*(x)\big)\calw(x)^{1/2}$.

}

\section{Learning of the diffusion matrix}\label{202607151441}
This and the following sections will discuss the problem of evaluating the generalization error in learning the diffusion {\cred matrix}.

\subsection{Stochastic regression model with an $\alpha$-mixing covariate process}
Let $\bbR_+=\bbR_{\geq0}=[0,\infty)$. 
On a stochastic basis $(\Omega,\calf,\bbF,P)$ with a right-continuous filtration $\bbF=(\calf_t)_{t\in\bbR_+}$, 
we consider an $\sfm$-dimensional adapted process $Y=(Y_t)_{t\in\bbR_+}$ that satisfies 
\bea\label{202509191103}
dY_t &=& b(X_t)dt+\sigma(X_t)dw_t\quad(t\in\bbR_+)
\eea
for a $\sfd$-dimensional {\cred $\bbF$-progressively measurable} process $X=(X_t)_{t\in\bbR_+}$. 
The coefficients functions $b:\bbR^\sfd\to\bbR^\sfm$ and $\sigma:\bbR^\sfd\to\bbR^\sfm\otimes\bbR^\sfr$ 
are {\colorro measurable}, and $w=(w_t)_{t\in\bbR_+}$ is an $\sfr$-dimensional Wiener process. 
{\colorro
More precisely, we assume that the map $t\mapsto b(X_t)$ is locally integrable in $dt$ a.s., and that the map $t\mapsto\sigma(X_t)$ is locally square-integrable in $dt$ a.s. }%
The model (\ref{202509191103}) is called a stochastic regression model. 
When $X=Y$ in particular, $Y$ is a diffusion process, though the model (\ref{202509191103}) 
admits more general processes {\cred as} $X$. 
In a statistical context, the process $X$ is a covariate process. 
We assume that $X$ is {\cred geometrically} $\alpha$-mixing. More precisely, 
for the $\sigma$-field $\calb_I^X=\sigma[X_t,;\>t\in I]$ for $I\subset\bbR_+$, 
the $\alpha$-mixing coefficient is given by 
\beas 
\alpha^X(r)
&=& 
\sup_{t\in\bbR_+}\sup\bigg\{\big|P[{\tt A}\cap{\tt B}]-P[{\tt A}]P[{\tt B}]\big|;\>
{\tt A}\in\calb^X_{[0,t]},{\tt B}\in\calb^X_{[t+r,\infty)}\bigg\}. 
\eeas
{\cred We will assume $\alpha^X(r)$ decays exponentially fast as $r\to\infty$. }%

{\colorro 
We assume that the process $X$ is an It\^o semimartingale having a differential representation 
\bea\label{202608011934}
dX_t &=& {\cred b^X_t}dt+{\cred\sigma^X_t}dw_t^X, 
\eea
where 
and $w^X=(w^X_t)_{t\in\bbR_+}$ is an $\sfr_X$-dimensional $\bbF$-Wiener process (possiblly correlated with $w$), 
${\cred b^X}=({\cred b^X_t})_{t\in\bbR_+}$ and ${\cred\sigma^X}=({\cred\sigma^X_t})_{t\in\bbR_+}$ are 
$\bbR^\sfd$-valued and $\bbR^\sfd\otimes\bbR^{\sfr_X}$-valued $\bbF$-progressively measurable processes, respectively. 

Let $\cals$ [resp. $\cals_{+}$] be the 
set of $\sfm\times\sfm$ symmetric matrices 
[resp. {\colorb nonnegative}-definite symmetric matrices]. 
For a measurable set $\calx$ in $\bbR^\sfd$, denote by $\bbS$ a family of measurable functions $S:{\colorb\calx}\to\cals_+$ 
such that 
\bea\label{202505050251}
0<\inf_{x\in\calx,S\in\bbS}\lambda_{\min}(S(x))
\leq\sup_{x\in\calx,S\in\bbS}\lambda_{\max}(S(x))<\infty, 
\eea
where $\lambda_{\min}(S)$ and $\lambda_{\max}(S)$ are the minimum eigenvalue and the maximum eigenvalue of the symmetric matrix $S$, respectively.

Consider a set $\sfB \subset C(\calx;\cals_+)\cap\bbS$ such that 
$\{S|_{\calx^o};\>S\in\sfB\}$ is a bounded set in $ C^{\ell,\beta-\ell}_b(\calx^o;\bbR^\sfm\otimes\bbR^\sfm)$. 
%
Here $\calx^o=\text{Int }\calx$ and 
the set $C^{\ell,\beta}_b(\calx^o;\bbR^\sfm\otimes\bbR^\sfm)$ is identified with $\big(C^{\ell,\beta}_b(\calx^o)\big)^{\sfm^2}$. 
Since $C^{\ell,\beta}_b(\calx^o;\bbR^\sfm\otimes\bbR^\sfm)$ has a topology associated with the norm 
$\|\cdot\|_{C^{\ell,\beta-\ell}(\calx^o)}$, it is regarded as a measurable space.

Suppose that $\check{\calx}$ is an open set of $\bbR^\sfd$. 
Denote by $S^*:\bbR^\sfd\to\cals_+$ the true function of {\cred$S=\sigma\sigma^\star$, $\star$ denoting the matrix transpose}. 
{\fred For $n\in\bbN$, let $\bbI_{0,n}=\{0,1,...,n\}$ and $\bbI_n=\{1,...,n\}$. }%
We will often identify $S:\bbR^\sfd\to\cals_+$ with the restriction $S|_{\sf A}$ of $S$ to a set ${\sf A}\subset\bbR^\sfd$. 
We assume the following condition.  
}%
\bd
\im[{\bf[A1]}] 
\bd
\im[(i)] 
There exists a positive constant $\gamma$ such that 
$\alpha^X(r)\leq\gamma^{-1}e^{-\gamma r}$ for all $r>0$. 

\im[(ii)] 
{\colorro 
For every $p>1$, 
\bea\label{202606291546}
\sup_{t\in\bbR_+}\big(\big\||{\cred b^X_t}|1_{\{X_t\in\check{\calx}\}}\big\|_p+\big\||{\cred\sigma^X_t}|1_{\{X_t\in\check{\calx}\}}\big\|_p\big) &<& \infty. 
\eea
}

\im[(iii)] {\colorro 
$S^*|_{\calx}\in\sfB$, {\cred $\sigma^*\in C(\check{\calx};\bbR^\sfm\otimes\bbR^\sfr)$, }
$S^*|_{\check{\calx}}\in C^2(\check{\calx};\bbR^\sfm\otimes\bbR^\sfm)$, 
and $b^*|_{\check{\calx}}\in C^2(\check{\calx};\bbR^\sfm)$. }

\im[(iv)] 
{\colorro
There exists $p_0>2$ such that 
{\cred 
\bea\label{202608061252}
{\fred\sup_{n\in\bbN}}
\sup_{j\in\bbI_n}\big\|h_n^{-1/2}|Y_\tj-Y_\tjm|\big\|_{p_0} &<& \infty.
\eea
}
}
\ed
\ed

{\cred Obviously, (\ref{202608061252}) holds if 
\bea\label{202608061257}
{\fred\sup_{s>0}}
\sup_{t\in\bbR_+}\big\|s^{-1/2}|Y_{t+s}-Y_t|\big\|_{p_0} &<& \infty.
\eea
A sufficient condition for (\ref{202608061257}), and thus for $[A1]$ (iv), is that 
}
\bd
\im[(iv$'$)] There exists $p_0>2$ such that 
$\sup_{t\in\bbR_+}\big(\big||b(X_t)|\big\|_{p_0}+\big\||\sigma(X_t)|\big\|_{p_0})<\infty$. 
\ed

The geometric mixing property like $[A1]$ (i) is well established for various stochastic processes including diffusion processes. 
We do not assume stationarity of $X$.

%
We will consider learning of the $\bbR^\sfm\otimes\bbR^\sfm$-valued function $S$ based on the data $(X_\tj,Y_\tj)_{t\in\bbI_{0,n}}$ 
sampled from $Z=(X,Y)$, 
where $\tj=t^n_j=jh$, $h=h_n$. 
High-frequency and long-run sampling is considered, that is, 
${\cred T=T_n=\>}nh\to\infty$ and $h\to0$ as $n\to\infty$. 
\halflineskip

{\colorb
\begin{remark}\rm
Let 
$\calb_I^{X,dY}=\sigma[X_t,Y_t-Y_s;\>s,t\in I]$ for $I\subset\bbR_+$, and 
\beas 
\alpha^{X,dY}(r)
&=&
\sup_{t\in\bbR_+}\sup\bigg\{\big|P[{\tt A}\cap{\tt B}]-P[{\tt A}]P[{\tt B}]\big|;\>
{\tt A}\in\calb^{X,dY}_{[0,t]},{\tt B}\in\calb^{X,dY}_{[t+r,\infty)}\bigg\}. 
\eeas
Then $\alpha^{X,dY}(r)\geq\alpha^X(r)$. 
This suggests that it is impossible to obtain the correct rate of 
convergence of the generalization error if the mixing condition is 
applied in a conventional way. Under this conventional approach, 
an error bound $T^{-\kappa}$ would follow for $T = nh$ and some 
constant $\kappa < 1$. This is because the mixing coefficient is 
expressed in terms of time, not $n$, making the resulting bound 
suboptimal since $T \ll n$. The comparison theorems in Section  \ref{202606202120}
bypass this difficulty.
{\cred Incidentally, the bound takes the form of $T^{-\kappa}$ 
in estimation of the drift function. }
\end{remark}
}

{\colorr 
\subsection{Two-step estimator for $S$}
{\cred Let $\beta\in\bbR_{\geq2}$. }%
Fix a continuous function 
$\onecalw:\bbR^\sfd\to\bbR_+=[0,\infty)$ such that $\onecalw^{{\colorro 1/2}}\in C^{\ell,\beta-\ell}_b(\bbR^\sfd)$ and 
$
\text{supp }\onecalw
\subset{\colorro\calx}
$ 
.
%

{\cred Let ${\tt M}_1[{\tt M}_2]=\text{Tr }({\tt M}_1{\tt M_2}^\star)$ for matrices ${\tt M}_1$ and ${\tt M}_2$ of the same size. }%
We write $y^{\otimes2}$ for $yy^\star$ for a $\sfm$-dimensional column vector $y$. 
Consider the contrast function 
\bea\label{202505042250}
\Phi_n (S,Z) &=& {\colorb\frac{1}{2n}}\sum_{j\in\bbI_n}\big\{h^{-1}S(X_\tjm)^{-1}[(\Delta_jY)^{\otimes2}]+\log\det S(X_\tjm)\big\}\onecalw(X_\tjm)
\eea
for $S\in\bbS$, 
with $\Delta_jY=Y_\tj-Y_\tjm$. 

%
For $n\in\bbN$, let ${\mathfrak F}_n$ be a family of mappings such that 
\bea\label{202604101836}
{\mathfrak F}_n\subset\bbS. 
\eea 
A family ${\mathfrak F}_n$ satisfying (\ref{202604101836}) is easily constructed, for example, 
as the image of a bounded set of 
{\cred $\cals$-valued continuous functions on $\calx$} 
by the exponential map.

The norm $|S|$ is defined as $|S|=\big(\sum_{i,j}S_{i,j}^2\big)^{1/2}$ for a matrix $S=(S_{i,j})$. 
We will use the following notation: 
{\colorb 
\beas 
|S|_n
&=& 
\bigg(
n^{-1}\sum_{j\in\bbI_n}\big|
S(X_\tjm)\big|^2\calw(X_\tjm)\bigg)^{1/2}
\eeas
for a function $S:\calx\to\bbR^\sfm\otimes\bbR^\sfm$. 
{\cred The random variable} 
$|S|_n$ is well defined; $|S(x)|\calw(x)$ 
{\cred is understood to be zero whenever} $\calw(x)=0$. 
We often follow this convention. 
}

Our estimator is constructed in two steps.
The initial estimator $\wh{S}_n^\DL$ is constructed to be 
$\sigma[(X_\tj,Y_\tj)_{j\in\bbI_{0,n}}]$-measurable and to asymptotically minimizes $\Phi_n (S,Z)$ over $S\in{\mathfrak F}_n$. 
Define $\Delta _n$ as 
\bea\label{202602140236}
\Delta_n
&=& 
E\bigg[\Phi_n (\wh{S}_n^\DL;Z)-\inf_{S\in{\mathfrak F}_n }\Phi_n (S;Z)\bigg].
\eea
In practice, the optimization error $\Delta_n$ needs to be kept small.

The second-stage estimator 
$\wh{S}_n$ is a $\sfB$-valued measurable map satisfying 
{\colorb 
\beas
\wh{S}_n &\in& \sfB\quad a.s.
\eeas
}%
$\wh{S}_n$ is a projection of $\wh{S}_n^\DL$ to the set $\sfB$. 
{\colorb 
The error in this procedure is denoted by 
\bea\label{202606251832}
\wh{\Delta}_n
&=&
{\cred E\bigg[}
\big|\wh{S}_n-\wh{S}^\DL_n\big|_n^2
-\inf_{S\in\sfB}\big|S-\wh{S}^\DL_n\big|_n^2{\cred\bigg]}. 
\eea

}

\subsection{Generalization error}

Denote by $\ol{Z}=(\ol{X},\ol{Y})$ an independent copy of $Z=(X,Y)$. 
The generalization error $\ol{\bfR}_n=\ol{\bfR}_n(\wh{S}_n){\fred\>=\ol{\bfR}_n(\wh{S}_n,S^*)}$ for $\wh{S}_n$ is defined as 
$
\ol{\bfR}_n
=
{\colorro E\big[\Phi_n(\wh{S}_n,\ol{Z})-\Phi_n(S^*,\ol{Z})\big]}
$, that is, 
\bea\label{202603120055}
\ol{\bfR}_n
&=&
 \frac{1}{2n} E\bigg[\sum_{j\in\bbI_n}\bigg\{(\wh{S}_n(\ol{X}_\tjm)^{-1}-S^*(\ol{X}_\tjm)^{-1})
 \big[h^{-1}(\Delta_j\ol{Y})^{\otimes2}\big]+\log\frac{\det \wh{S}_n(\ol{X}_\tjm)}{\det S^*(\ol{X}_\tjm)}\bigg\}
\onecalw(\ol{X}_\tjm)\bigg].
\nn\\&&
\eea
We do not assume the stationarity of $(X,Y)$. 

The discrepancy between two elements $S$ and $S^*$ of $\bbS$ is evaluated by the loss function 
\bea\label{202608011524}
U (x,S)
&=& 
\half\bigg\{\big(S(x)^{-1}-S^*(x)^{-1}\big)[S^*(x)]+\log\frac{\det S(x)}{\det S^*(x)}\bigg\}\onecalw(x) \quad (x\in\calx). 
\eea
The function $U(\cdot,S)$ is extended as $U(x,S)=0$ for $x\in\calx^c$. 
By definition, $U(x,S)\geq0$. 

{\colorro
The non-negativity of the generalization error $\ol{\bfR}_n=\ol{\bfR}_n(\wh{S}_n)$ is not guaranteed. %
In this sense, a natural risk is 
the predictable generalization error $\ol{\bfR}_n^p=\ol{\bfR}_n^p(\wh{S}_n){\fred\>=\ol{\bfR}_n^p(\wh{S}_n,S^*)}$ for $\wh{S}_n$ defined by
\beas 
\ol{\bfR}_n^p
&=&
n^{-1}E\bigg[\sum_{j\in\bbI_n}U(\ol{X}_\tjm,\wh{S}_n)\bigg], 
\eeas
equivalently, 
\bea\label{202608011537}
\ol{\bfR}_n^p
&=&
 \frac{1}{2n} E\bigg[\sum_{j\in\bbI_n}\bigg\{(\wh{S}_n(\ol{X}_\tjm)^{-1}-S^*(\ol{X}_\tjm)^{-1})[S^*(\ol{X}_\tjm)]+\log\frac{\det \wh{S}_n(\ol{X}_\tjm)}{\det S^*(\ol{X}_\tjm)}\bigg\}
\onecalw(\ol{X}_\tjm)\bigg]. 
\nn\\&&
\eea
The predictable generalization error is always nonnegative. 

}

{\colorro
\begin{remark}\rm
We will derive a bound for $\ol{\bfR}_n^p$. 
Obviously, the same bound remains valid for the risk when $\calw$ is replaced by any measurable non-negative function $\calw_0$ 
satisfying $\calw_0\leq\calw$ (e.g. the indicator function of a window).
\end{remark}
}

\begin{remark}\rm
If $X$ is stationary, then the predictable generalization error $\ol{\bfR}_n^p$ of the estimator $\wh{S}_n$ is expressed as 
$
\ol{\bfR}_n^p
=
E\big[U (\ol{X}_0;\wh{S}_n)\big]
$. 
{\fred 
By the compatibility property (\ref{202505050127}), $\ol{\bfR}_n^p$ is compatible with the ordinary $L^2$-risk 
of the estimator with respect to the stationary law of $X_0$.
}
%
\end{remark}

}

\subsection{Results}

We assume the following conditions as well as $[A1]$. 
%
\bd
\im[{\bf[A2]}] 
\bd
\im[(i)]
$\calx$ is {\cred a compact set in $\bbR^\sfd$ satisfying $\ol{\calx^o}=\calx$,} 
and $\check{\calx}$ is an open neighborhood of $\calx$. 

\im[(ii)]  The distribution of $X_t$ has a density $p^{X_t}$ on $\check{\calx}$ {\colorro for $t\in{\bbR_+}$}. 
\im[(iii)]  $\inf_{t\in{\bbR_+}}\inf_{a\in{\cred\check{\calx}}}p^{X_t}(a)>0$. 
\im[(iv)]  $\sup_{t\in{\bbR_+}}\sup_{x\in\check{\calx}}p^{X_t}(x)<\infty$. 
\im[(v)]  The functions $\{p^{X_t}\}_{t\in{\bbR_+}}$ are equicontinuous on $\check{\calx}$. 
\ed
\ed
For diffusion processes, for example, these conditions can be verified with the Malliavin calculus and support theorems. 
\halflineskip

Denote by $\{S_k\}_{k=1,...,\caln_n}$ a set of functions $S_k\in\bbS$ 
such that 
\beas
{\mathfrak F}_n
&=&
\bigcup_{k=1}^{\caln_n}
\bigg\{S\in{\mathfrak F}_n;\>\sup_{x\in\text{supp }\calw}{\cblue\big\|}S(x)-S_k(x){\cblue\big\|}<\delta_n\bigg\}
\eeas
for $\delta_n>0$. 
{\cblue Here $\|S\|=\max_{i,j}|S_{i,j}|$ for a matrix $S=(S_{i,j})$. }
%
The number $\caln_n$ is the covering number of ${\mathfrak F}_n$ by the $\delta_n$-balls with respect to the sup-norm. 
{\cred We may assume that $\caln_n\geq2$; double-count a single ball when it covers the entire family.}

{\colorb Write 
\bea\label{202606291716}
\lambda_n=\{n^{-1}(\log n)\log\caln_n\}^{1/2}
\eea}
and let 
\beas 
\gamma_n
\yeq
\lambda_n^2
\yeq
n^{-1}{\colorb(\log n)}\log\caln_n.
\eeas

\bd
\im[{\bf[A3]}] {\colorro The following balance conditions are satisfied: }
\bea&&
\limsup_{n\to\infty}\frac{\log\log n}{\log\caln_n}<\infty,
\label{202602140148}
\\&&
{\cblue \lim_{n\to\infty}\gamma_n=0,}
\label{202602140215}
\\&&
{\colorro h = O(n^{-\ep_0})\quad\text{ for some }\ep_0>0,}
\label{202606291521}
\\&&
\lim_{n\to\infty}T\gamma_n^{\frac{{\cred\sfd}}{{\colorro2}\beta}}(\log n)^{-3} = \infty. 
\label{202604131857}
\eea

\ed
%

{\gray
}


{\cred 
Let 
\bea\label{202608081238}
\ol{U}_n(S) 
&=& 
\frac{1}{n}\sum_{j\in\bbI_n}U(S,X_\tjm).
\eea
}
Here {\cred is the} main theorem. 

\begin{theorem}\label{202602140323}
{\cred Suppose that Conditions $[A1]$-$[A3]$ are satisfied. Then,} %
there exists a constant $\Csak$ such that 
\bea\label{202602140245a}
\ol{\bfR}_n\vee\ol{\bfR}_n^p 
&\leq&
\Csak\bigg(
\Delta_n{\cred+\>\wh{\Delta}_n}+\inf_{S\in\mathfrak{F}_n}
{\cred E\big[\ol{U}_n(S) \big]}
+n^{-1}(\log n)\log\caln_n+\delta_n\log n+{\colorro h_n^2}\bigg)\quad
\eea
{\cred for all $n\in\bbN$.}
\end{theorem}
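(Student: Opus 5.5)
The proof has three stages: bound the predictable empirical risk, transfer it to the independent copy with Theorem \ref{202606111044}, and deduce the bound for $\ol{\bfR}_n$ from that for $\ol{\bfR}_n^p$.

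\textbf{Stage 1: the empirical risk.} Let $\bfR_n^p=E[\ol{U}_n(\wh{S}_n)]$. We show that $\bfR_n^p$ is bounded by the right-hand side of (\ref{202602140245a}). Expand $\Phi_n(S,Z)-\Phi_n(S^*,Z)$ as $\ol{U}_n(S)+M_n(S)+B_n(S)$. Here
\[
M_n(S)=\frac{1}{2n}\sum_{j\in\bbI_n}\big(S^{-1}-S^{*-1}\big)(X_\tjm)\big[h^{-1}(\Delta_jY)^{\otimes2}-h^{-1}E[(\Delta_jY)^{\otimes2}\mid\calf_\tjm]\big]\onecalw(X_\tjm)
\]
is a martingale-difference sum, and $B_n(S)$ collects the conditional bias $h^{-1}E[(\Delta_jY)^{\otimes2}\mid\calf_\tjm]-S^*(X_\tjm)$.

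The bias is handled by It\^o's formula together with $[A1]$(ii)--(iv) and the $C^2$ regularity of $S^*$ and $b^*$. This gives a contribution of order $h$ to each summand. The $h$-term is then split: by Cauchy--Schwarz its product with $|S^{-1}-S^{*-1}|$ is at most $\eta\,\ol{U}_n(S)+C_\eta h^2$, which yields the $h_n^2$ term.

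For the martingale part, use the covering $\{S_k\}$ of ${\mathfrak F}_n$. Replacing $S$ by its nearest $S_k$ costs $O(\delta_n\log n)$, using $L^{p_0}$ bounds on $h^{-1}|\Delta_jY|^2$ truncated at level $\log n$. For each fixed $S_k$, apply a Bernstein/Freedman-type exponential inequality for martingales. The conditional variance is $\lesssim\ol{U}_n(S_k)$ by the compatibility $U\asymp|S-S^*|^2\onecalw$. The increments are sub-exponential because of the short-time Brownian large deviations, which after truncation costs a factor $\log n$. A union bound over the $\caln_n$ centres then gives
\[
|M_n(S_k)|\le\eta\,\ol{U}_n(S_k)+C\gamma_n \quad\text{with high probability.}
\]
Condition (\ref{202602140148}) keeps the exceptional probabilities of order $\gamma_n$.

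This bound is applied to $\wh{S}_n^\DL$ against the minimiser over ${\mathfrak F}_n$, which produces $\Delta_n$ and $\inf_{S}E[\ol{U}_n(S)]$. The projection step from $\wh{S}_n^\DL$ to $\wh{S}_n$ is controlled through $|\cdot|_n$ and $\wh{\Delta}_n$. Since $S^*\in\sfB$, we have $|\wh{S}_n-S^*|_n^2\lesssim|\wh{S}_n^\DL-S^*|_n^2+\wh{\Delta}_n$, and the compatibility between $U$ and $|\cdot|_n^2$ carries the bound from $\wh{S}_n^\DL$ to $\wh{S}_n$. This completes the bound on $\bfR_n^p$.

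\textbf{Stage 2: transfer to the independent copy.} Apply Theorem \ref{202606111044} with $m=1$, $\xi_{n,j}=X_\tjm$, $\xi'_{n,j}=\ol{X}_\tjm$, $\psi\equiv1$, and $D_n$ equal to the components of $(\wh{S}_n^{-1}-S^{*-1})\onecalw^{1/2}$. These are ${\tt B}$-valued because $\wh{S}_n\in\sfB$, $\onecalw^{1/2}\in C_b^{\ell,\beta-\ell}$, and the eigenvalues are bounded away from zero. Since $U\asymp|D_n|^2$, this gives
\[
\ol{\bfR}_n^p\lesssim\bfR_n^p+d_n^{2\beta}+d_n^{-\sfd}\sup_{a\in\hat\calx}Q_{1,n}(a).
\]
Choose $d_n=\gamma_n^{1/(2\beta)}$, so that $d_n^{2\beta}=\gamma_n$.

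To control $Q_{1,n}(a)$, note that by $[A2]$(iii)--(iv) the mean $n^{-1}\sum_j E[\varphi_{0,d_n/2}(X_\tjm-a)]$ is bounded below by a constant, while the summands are bounded by $d_n^{-\sfd}$. A Bernstein inequality for geometrically $\alpha$-mixing sequences, with blocks of time-length about $\log n$ containing $(\log n)/h$ observations, gives
\[
Q_{1,n}(a)\le C\exp\!\big(-c\,T d_n^{\sfd}/(\log n)^2\big).
\]
Condition (\ref{202604131857}) then makes $d_n^{-\sfd}Q_{1,n}=o(\gamma_n)$, indeed faster than any power of $n$. The variance estimate for $\sum_j\varphi(X_\tjm-a)$ uses the density bounds in $[A2]$, and equicontinuity $[A2]$(v) gives uniformity in $a$.

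\textbf{Stage 3: from $\ol{\bfR}_n^p$ to $\ol{\bfR}_n$.} Because $\ol{Z}$ is independent of $\wh{S}_n$, conditioning on $\wh{S}_n$ gives $\ol{\bfR}_n-\ol{\bfR}_n^p=E[\ol{B}_n(\wh{S}_n)]$. The same It\^o estimate as in Stage 1 bounds this by $\eta\,\ol{\bfR}_n^p+Ch^2$.

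\textbf{Main obstacle.} The hardest step is the uniform martingale deviation bound in Stage 1. It must achieve the $n$-rate $n^{-1}\log n\log\caln_n$ rather than a $T$-rate, using only conditional Gaussian-like increments. The difficulty is the self-normalisation: the variance must be expressed by $\ol{U}_n(S)$ itself so that it can be absorbed with the factor $\eta$. I would try a peeling argument over dyadic shells of $\ol{U}_n(S_k)$, combined with the truncation of $h^{-1}|\Delta_jY|^2$ at level $\log n$.
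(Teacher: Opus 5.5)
Your three stages follow the same chain of lemmas as the paper, but the passage from the high-probability martingale bound to an expectation bound does not work as you state it. The shared chain is: the projection step through $\wh{\Delta}_n$ and compatibility (Lemma \ref{202602120344}); a self-normalized martingale bound over the $\delta_n$-net with dyadic peeling (Lemmas \ref{202602130518} and \ref{202602140214}); the optimization and bias steps (Lemmas \ref{202604111325} and \ref{202607131330}); the comparison theorem with $m=1$, $d_n=\gamma_n^{1/(2\beta)}$ and a Bernstein inequality for geometrically $\alpha$-mixing sequences (Lemma \ref{202509220400}); and $E[\ol{\bbJ}_n]=0$ by independence (Lemma \ref{202605140839}).

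The remaining differences are technical. You build discrete martingale differences from $E[(\Delta_jY)^{\otimes2}\mid\calf_{\tjm}]$ and apply a Freedman inequality. The paper keeps the continuous-time It\^o expansion $M^n_j$, stops it at $\tau_j$, and embeds $\bbJ^*_{n,k}$ in a Brownian motion. This gives purely Gaussian tails $\exp(-cnx^2/\log n)$, with variance controlled by $|S_k-S^*|_n$ and no linear Bernstein term involving $\sup|S_k-S^*|$. Your $\vee\lambda_n$-type normalization $\eta\,\ol{U}_n(S_k)+C\gamma_n$ is what keeps that linear term harmless. Your $D_n$ built from $\wh{S}_n^{-1}-S^{*-1}$ also works, but it needs a composition argument to stay in a bounded H\"older set; the paper's $(\wh{S}_n-S^*)\calw^{1/2}$ avoids this. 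Blocking instead of splitting into $\lfloor 1/h\rfloor$ subsequences is immaterial. Your ``truncation at level $\log n$'' is best done as stopping-time localization, as with $\tau_j$. The reason is that $[A1]$ (iv) controls only unconditional moments, while Freedman's inequality needs pathwise control of the increments and of the predictable quadratic variation.

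Here is the step that fails. You claim that \eqref{202602140148} ``keeps the exceptional probabilities of order $\gamma_n$''. At threshold $C\gamma_n$, the union bound over $\caln_n$ centres and $O(\log n)$ shells leaves an exceptional probability of order $(\log n)\,\caln_n^{1-cC}$. But \eqref{202602140148} only guarantees $\caln_n\geq(\log n)^{c'}$. For instance, with $\caln_n\asymp(\log n)^2$, which is compatible with $[A3]$, this probability is only polylogarithmically small, not $O(\gamma_n)$. On that event $|M_n|$ is controlled only in $L^{p_0/2}$, or by $\log n$ after localization, so its contribution to $E[M_n(\wh{S}^\DL_n)]$ is not $O(\gamma_n)$.

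The remedy is the layer-cake argument in the proof of Lemma \ref{202602130518}. Bound $E\big[\max_k\{(|S_k-S^*|_n\vee\lambda_n)^{-1}|M_n(S_k)|\}^2\big]$ by $\int_0^\infty 2x\,P[\,\cdot>x\,]\,dx$. The part of the integral above $C\lambda_n$ is of order $n^{-1}(\log n)^2\caln_n^{1-cC}$, up to powers of $\log\caln_n$. Here the factor $n^{-1}$ comes from the integration, not from the exceptional probability. Condition \eqref{202602140148}, in the form $\log n\lesssim\caln_n^{p}$, is used only to absorb the $\log n$ coming from the number of shells, by taking $C$ large. Cauchy--Schwarz, or your $\eta$-absorption, then gives $|E[M_n(\wh{S}^\DL_n)]|\lesssim(\bfR_n^{\DL,p})^{1/2}\lambda_n+\lambda_n^2+\delta_n\log n$, and the rest of your argument goes through.
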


%
\if
\proof 
Theorem \ref{202605140943}
is obtained 
from Theorem \ref{202602140323} 
with the aid of Lemma  \ref{202605140839}. 
\qed\halflineskip
\halflineskip
\fi

{\colorro
\begin{remark}\rm 
{\fred(i)} %
Denote by the subscript $\log$ (like $\sim_{\log}$) the relation modulo a (positive or negative) power of $\log n$. 
For example, in the case where 
\bea\label{202604131933}
{\fred n^{-1}}\log\caln_n &\sim_{\log}& \gamma_n\sim_{\log} n^{-\frac{2\beta}{2\beta+\sfd}},
\eea
then Condition (\ref{202604131857}) is rephrased as 
\bea\label{202604131938}
h n^{\frac{2\beta}{2\beta+\sfd}} &\to_{\log}& \infty. 
\eea
Then $h$ must satisfy $h\gg_{{\cred\log}}  n^{-\frac{2\beta}{2\beta+\sfd}} $.

\noindent{\fred (ii)} 
On the other hand, for the error bound provided by Theorems \ref{202602140323} 
to make sense in comparison with the discretization error, 
it should hold that $h^2\leq\gamma_n$ as well as $h\gg_{{\cred\log}}  n^{-\frac{2\beta}{2\beta+\sfd}} $. 
Thus, 
it is reasonable to consider the case where 
\bea\label{202605151652}
n^{-\frac{2\beta}{2\beta+\sfd}}\ll_{\log}\ h\ \ll_{\log}\ n^{-\frac{\beta}{2\beta+\sfd}}. 
\eea
{\cred 
If making $\beta\up\infty$ just formally in (\ref{202605151652}), we would have the conditions 
\bea\label{2026071000146}
T=nh\to\infty\ \text{ and }\ nh^2\to0. 
\eea
The conditions in (\ref{2026071000146}) are natural by analogy to the parametric case since 
the second one is the basic balance condition between $n$ and $h$. 
}

{\fred
\noindent(iii) The best rate attained by the bound (\ref{202602140245a}) is typically given by the balance 
between the terms 
$\inf_{S\in\mathfrak{F}_n}E[\ol{U}_n(S)]$ and $n^{-1}(\log n)\log\caln_n$. 
}

\end{remark}
}
\halflineskip

{\cred 
Now we weaken Condition $[A3]$ as follows. 
\bd
\im[{\bf[A3$^\flat$]}] 
Properties (\ref{202602140148}), (\ref{202602140215}) and (\ref{202606291521}) hold. 
{\cblue Moreover $T^{-1}(\log n)^3\to0$ as $n\to\infty$. }
\ed
%

Condition $[A3^\flat]$ does not assume (iv) of $[A3]$. 
In this case, naturally, the bound of the generalization error can be worsened. 

\begin{proposition}\label{202607131357}
Suppose that Conditions $[A1]$, $[A2]$ and $[A3^\flat]$ are satisfied. 
Then, 
for any sequence $\call=(\call_n)_{n\in\bbN}$ of positive numbers satisfying $\lim_{n\to\infty}\call_n=\infty$, 
there exists a constant $\Csbc$ such that 
\bea\label{202602140245a_flat}
\ol{\bfR}_n\vee\ol{\bfR}_n^p 
&\leq&
\Csbc\bigg(
\Delta_n+\inf_{S\in\mathfrak{F}_n}
{\cred E\big[\ol{U}_n(S) \big]}
+T^{-2\beta/\sfd}(\log n)^{6\beta/\sfd}\call_n
\nn\\&&\hspace{40pt}
+n^{-1}(\log n)\log\caln_n+\delta_n\log n+h_n^2\bigg)
\eea
for {\fred all} $n\in\bbN$. 
\end{proposition}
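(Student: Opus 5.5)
The proof will follow the same route as Theorem \ref{202602140323}. The only difference is that, in the comparison step between the predictable generalization error $\ol{\bfR}_n^p$ and its empirical counterpart $\bfR_n^p=E[\cale_n^p]$, the bandwidth $d_n$ is no longer tuned to $\gamma_n$. It is instead allowed to be as small as the mixing of $X$ over $[0,T]$ permits, and this produces the extra term $T^{-2\beta/\sfd}(\log n)^{6\beta/\sfd}\call_n$.

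\textbf{Empirical risk.} The first step is unchanged from the proof of Theorem \ref{202602140323}. From the definition of $\Delta_n$, the covering of ${\mathfrak F}_n$ by $\caln_n$ balls of radius $\delta_n$, and the short-time Brownian large deviation for the increments $h^{-1/2}\Delta_jY$, we obtain
\[
\bfR_n^p\lesssim \Delta_n+\wh{\Delta}_n+\inf_{S\in{\mathfrak F}_n}E[\ol{U}_n(S)]+\gamma_n+\delta_n\log n+h_n^2 .
\]
Here $\wh{\Delta}_n$ is absorbed after passing from $\wh{S}_n^\DL$ to $\wh{S}_n$ through the projection onto $\sfB$. This step uses only (\ref{202602140148}), (\ref{202602140215}) and (\ref{202606291521}), which are all retained in $[A3^\flat]$.

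\textbf{Comparison step.} We apply Theorem \ref{202606111044} with $m=1$, $\xi_{n,j}=X_{t_{j-1}}$ and $\xi'_{n,j}=\ol{X}_{t_{j-1}}$, and with $D_n$ equal to the components of $(\wh{S}_n-S^*)\calw^{1/2}$. These are ${\tt B}$-valued and supported in $\calx$ because $\wh{S}_n,S^*\in\sfB$ and $\calw^{1/2}\in C^{\ell,\beta-\ell}_b$. By the compatibility $U(x,S)\asymp|S(x)-S^*(x)|^2\calw(x)$, this yields
\[
\ol{\bfR}_n^p\lesssim \bfR_n^p+d_n^{2\beta}+d_n^{-\sfd}\sup_{a\in\hat{\calx}}Q_{1,n}(a).
\]
Under $[A2]$ (iii)--(v), the expectation $n^{-1}\sum_jE[\varphi_{0,d_n/2}(X_{t_{j-1}}-a)]$ is bounded below by a positive constant uniformly in $a\in\hat{\calx}$, with $\hat{\calx}\subset\check{\calx}$, once $n$ is large. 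Then $Q_{1,n}(a)$ is the probability of a constant-size deviation of a normalized sum of bounded functions of size $O(d_n^{-\sfd})$. A Bernstein-type inequality for geometrically $\alpha$-mixing sequences (blocking over time windows of length $\asymp\log n$, using $[A1]$ (i)) gives a bound of the form
\[
Q_{1,n}(a)\lesssim \exp\big(-c\,T d_n^{\sfd}(\log n)^{-2}\big)+n^{-L}
\]
for any $L>0$. The powers of $\log n$ come from the block length and from the $\log$ in the effective number of independent blocks $T/\log n$.

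\textbf{Choice of $d_n$.} We take $d_n^{\sfd}=T^{-1}(\log n)^{3}\call_n^{\sfd/(2\beta)}$. Then $Td_n^{\sfd}(\log n)^{-2}=\call_n^{\sfd/(2\beta)}\log n$. Since $\call_n\to\infty$, this makes $d_n^{-\sfd}\sup_aQ_{1,n}(a)\lesssim T n^{-L'}=o(h_n^2+\gamma_n)$, using (\ref{202606291521}). The condition $T^{-1}(\log n)^3\to0$ in $[A3^\flat]$ is needed only to make sure $d_n\to0$. We may slow down $\call_n$ if necessary, since a smaller $\call_n$ only weakens the claim. With this choice,
\[
d_n^{2\beta}=T^{-2\beta/\sfd}(\log n)^{6\beta/\sfd}\call_n,
\]
which is exactly the new term in (\ref{202602140245a_flat}). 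Finally, $\ol{\bfR}_n$ is controlled by $\ol{\bfR}_n^p$ plus an $O(h_n^2)$ discretization term. This comes from conditioning $h^{-1}(\Delta_j\ol{Y})^{\otimes2}$ on $\calf_{t_{j-1}}$ for the independent copy and using $[A1]$ (ii)--(iv) and the $C^2$ regularity of $S^*$ and $b^*$. It gives the bound for $\ol{\bfR}_n\vee\ol{\bfR}_n^p$.

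\textbf{Main obstacle.} The delicate step is the mixing deviation bound for $Q_{1,n}$ with exactly the exponent $Td_n^{\sfd}$ up to $\log^{-2}n$. The summands are not stationary, they are sampled at spacing $h$, and they have variance of order $d_n^{-\sfd}$ per unit time. My first attempt would be Merlevède–Peligrad–Rio's Bernstein inequality applied to the time-integrated block sums. If it falls short, I would use a direct Berbee coupling with blocks of length $\asymp\log n$. In either case the power of $\log n$ in the final term must be tracked so that it matches $(\log n)^{6\beta/\sfd}$.
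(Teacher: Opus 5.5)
Your proposal is correct and is essentially the paper's proof. The paper bounds $\bfR_n^p$ via Lemmas \ref{202602140253} and \ref{202607131330}, handles $\ol{\bfR}_n$ with Lemma \ref{202605140839}, and in between uses Lemma \ref{202509220400}(b), which is part (a) of that lemma (Theorem \ref{202606111044} with $m=1$ plus the Merlev\`ede--Peligrad--Rio inequality) run with $d_n^{2\beta}=T^{-2\beta/\sfd}(\log n)^{6\beta/\sfd}\big[\call_n\wedge\{T(\log n)^{-3}\}^{\beta/\sfd}\big]$. Two small differences: the paper applies that inequality to the residue classes of $j$ mod $\lfloor 1/h\rfloor$, at the harmless cost of a factor $h^{-1}$, rather than to unit-time block sums; and the cap makes your $d_n$ explicit, which is legitimate because a smaller $\call_n$ strengthens, not weakens, the bound. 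Note also that, exactly as in the paper's own argument, the term $\wh{\Delta}_n$ from Lemma \ref{202602120344} is not absorbed anywhere, so what both arguments actually establish is (\ref{202602140245a_flat}) with $+\wh{\Delta}_n$ added, as in Theorem \ref{202602140323}.
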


\begin{remark}\rm
Although we do not pursue this direction further here, it is straightforward to mildly reformulate the assumptions to obtain similar results to Theorem \ref{202602140323} and Proposition \ref{202607131357} under the repeated measurements setting, as follows.

Consider independent processes $(X^i_t,Y^i_t)_{t\in[0,T_0]}$ ($i\in\bbN$) 
each of which satisfies 
\beas 
dY^i_t &=& b(X^i_t)dt+\sigma(X^i_t)dw^i_t\quad(t\in[0,T_0])
\eeas
and 
\beas 
dX^i_t &=& {\cred b^{X^i}_t}dt+{\cred\sigma^{X^i}_t}dw_t^{X^i}\quad(t\in[0,T_0])
\eeas
where $(w^i,w^{X^i})$ ($i\in\bbN$) are independent pairs of Wiener processes 
($w^i$ and $w^{X^i}$ may be correlated), and the coefficients of the these stochastic differential equations 
are assumed to satisfy suitable conditions. 
Suppose that we observe $\big\{X^i_{jh},Y^i_{jh};\>i\in[N],j\in[M]\big\}$, $h=T_0/M$, 
for $N,M\in\bbN$ and a fixed $T_0>0$. 
We define the stochastic processes $X=(X_t)_{t\in\bbR_+}$ and $Y=(Y_t)_{t\in\bbR_+}$ as 
\beas 
X_t \yeq \sum_{i=1}^\infty1_{[(i-1)T_0,iT_0)}(t)X^i_{t-(i-1)T_0}
&\text{and}&
Y_t \yeq \sum_{i=1}^\infty1_{[(i-1)T_0,iT_0)}(t)Y^i_{t-(i-1)T_0}, 
\eeas
respectively, for $t\in\bbR_+$. 
Obviously, $X$ satisfies a geometric strong mixing condition. It is possible to make 
the stochastic differential equations (\ref{202509191103}) and (\ref{202608011934}) but 
on $t\in\cup_{i\in\bbN}[(i-1)T_0,iT_0)$. 

Let $n=MN$ and $Z=(X,Y)$ as before. The timestamps are renamed $(\tj)_{j=0,...,n}$ 
and 
we modify $\Phi_n(S,Z)$ of (\ref{202505042250}) as 
\beas
\Phi_n (S,Z) &=& \frac{1}{2n}\sum_{j\in\bbI_n}\big\{h^{-1}S(X_\tjm)^{-1}[(Y_{\tj-}-Y_\tjm)^{\otimes2}]+\log\det S(X_\tjm)\big\}\onecalw(X_\tjm). 
\eeas
%
%
Then we obtain 
similar bounds for the generalization errors under the repeated measurements 
when $N,M\to\infty$ since the proof in Section \ref{202607151444} is still valid under suitable assumptions. 
For example, Condition $[A1]$ (iv) should be replaced by a condition like
\beas 
\sup_{(N,M)\in\bbN^2}\sup_{j\in\bbI_n}\big\|h^{-1/2}|Y_{\tj-}-Y_\tjm|\big\|_{p_0} &<& \infty.
\eeas
Note that the variables $Y_{\tj-}$ are observable. 
\end{remark}
}

\section{{\cred Proof of Theorem \ref{202602140323} and Proposition \ref{202607131357}}}\label{202607151444}
Conditions $[A1]$, {\cred$[A2]$ and $[A3^\flat]$} are assumed {\cred throughout, unless otherwise stated}. 
{\cred Let $n\in\bbZ_{\geq2}$ in what follows. }%
We begin with the strategy to prove {\cred Theorem \ref{202602140323}}. 

\subsection{Strategy}\label{202606261851}
The predictable empirical risk of $\wh{S}_n$ 
is {\cred defined as} 
\bea\label{202602121114}
\cale_n^p
&=&
n^{-1}\sum_{j\in\bbI_n}U(X_\tjm,\wh{S}_n)
\\&=&
\frac{1}{2n} \sum_{j\in\bbI_n}\bigg\{(\wh{S}_n(X_\tjm)^{-1}-S^*(X_\tjm)^{-1})
 \big[S^*(X_\tjm)\big]+\log\frac{\det \wh{S}_n(X_\tjm)}{\det S^*(X_\tjm)}\bigg\}
\onecalw(X_\tjm).
\nn
\eea
%
%
The empirical risk of $\wh{S}_n^\DL$ 
is define as
\bea\label{202602120134a}
\cale_n^\DL
&=&
\Phi_n(\wh{S}_n^\DL,Z)-\Phi_n(S^*,Z)
\\&=&
\frac{1}{2n} \sum_{j\in\bbI_n}\bigg\{(\wh{S}_n^\DL(X_\tjm)^{-1}-S^*(X_\tjm)^{-1})
 \big[h^{-1}(\Delta_jY)^{\otimes2}\big]+\log\frac{\det \wh{S}_n^\DL(X_\tjm)}{\det S^*(X_\tjm)}\bigg\}
\onecalw(X_\tjm)
\nn
\eea
%
%
Moreover, the predictable empirical risk of $\wh{S}_n^\DL$ 
is define as
\bea\label{202602120134}
\cale_n^{\DL,p}
&=&
n^{-1}\sum_{j\in\bbI_n}U(X_\tjm,\wh{S}_n^\DL)
\\&=&
\frac{1}{2n} \sum_{j\in\bbI_n}\bigg\{(\wh{S}_n^\DL(X_\tjm)^{-1}-S^*(X_\tjm)^{-1})
 \big[S^*(X_\tjm)\big]+\log\frac{\det \wh{S}_n^\DL(X_\tjm)}{\det S^*(X_\tjm)}\bigg\}
\onecalw(X_\tjm)
\nn
\eea
Let $\bfR_n^p=E[\cale_n^p]$, $\bfR_n^\DL=E[\cale_n^\DL]$ and $\bfR_n^{\DL,p}=E[\cale_n^{\DL,p}]$.

In Section \ref{202606261834}, we prove {\cred Theorem \ref{202602140323} and Propositions \ref{202607131357}}. 
{\colorro 
Our strategy for the proof is as follows. 
\begin{enumerate}[(i)]
\im 
$\bfR_n^p$ is estimated by $\bfR_n^{\DL,p}$ 
(Lemma \ref{202602120344}).

\im $\bfR_n^{\DL,p}$ is estimated by $\bfR_n^\DL$ (Lemma \ref{202602140214}). 
This is by assessing the difference $\bfR_n^{\DL,p}-\bfR_n^\DL$  
with a large deviation argument (Lemma \ref{202602130518}). 
The discretization error {\colorro$h^2$} appears here. 
\label{202605140833}

\im 
$\bfR_n^\DL$ is controlled due to optimization (Lemma \ref{202604111325}). 

\im An estimate of $\bfR_n^p$ follows from Steps (i)-(iii) 
(Lemma \ref{202602140253}). 

\im 
$\ol{\bfR}_n^p$ is estimated by $\bfR_n^p$ (Lemma \ref{202509220400}) 
with the comparison theorem (Theorem \ref{202606111044}). 
{\cred For $\ol{\bfR}_n^p$, the estimate (\ref{202602140245a}) of Theorem \ref{202602140323} 
and the estimate (\ref{202602140245a_flat}) of Proposition \ref{202607131357} 
are obtained. }

\im $\ol{\bfR}_n$ is estimated by $\ol{\bfR}_n^p$ (Lemma \ref{202605140839}). 
This is by assessing the difference $\ol{\bfR}_n-\ol{\bfR}_n^p$ 
(the discretization error in a function of {\colorro$h^2$} once again appears here), 
with a large deviation argument essentially the same as Lemma \ref{202602130518}. 
This step is essentially the same as Step (\ref{202605140833}), except for the direction of the estimate is opposite, i.e., 
the optional risk is estimated by the predictable risk. 
\label{202602120401}
{\cred The estimate (\ref{202602140245a}) of Theorem \ref{202602140323} 
and the estimate (\ref{202602140245a_flat}) of Proposition \ref{202607131357} 
are obtained for $\ol{\bfR}_n$. }

\end{enumerate}

{\colorr 

\subsection{Domination of $\bfR_n^p$ by $\bfR_n^{\DL,p}$}
\begin{lemma}\label{202602120344}
There exists a constant $\Csaj$ such that 
\bea\label{202602100856}
\bfR_n^p
&\leq&
\Csaj\big(\bfR_n^{\DL,p}{\colorb +\wh{\Delta}_n}\big)
\eea
for all $n\in\bbN$. 
\end{lemma}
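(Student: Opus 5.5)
The plan is to compare $U(x,S)$ with the weighted squared distance $|S(x)-S^*(x)|^2\calw(x)$, uniformly over $S\in\bbS$. Then I pass through the empirical seminorm $|\cdot|_n$ and use the definition of $\wh{\Delta}_n$ as a near-projection onto $\sfB$.

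The first step is the compatibility inequality. By (\ref{202505050251}) and [A1](iii), the eigenvalues of $S(x)$ and $S^*(x)$ lie in a fixed compact interval $[\lambda_-,\lambda_+]\subset(0,\infty)$ for $x\in\calx$ and all $S\in\bbS\cup\sfB\cup\{S^*\}$. Write $A=S^{*1/2}S^{-1}S^{*1/2}$ with eigenvalues $\mu_i$. Then
\[
2U(x,S)/\calw(x)=\text{Tr}(A-I)-\log\det A=\sum_i(\mu_i-1-\log\mu_i).
\]
The function $g(\mu)=\mu-1-\log\mu$ is comparable to $(\mu-1)^2$ on compact subsets of $(0,\infty)$. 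Also $|A-I|\asymp|S-S^*|$ under the eigenvalue bounds. Together these give constants $0<c\le C$ with
\[
c|S(x)-S^*(x)|^2\calw(x)\le U(x,S)\le C|S(x)-S^*(x)|^2\calw(x),
\]
uniformly in $x\in\calx$ and over the relevant families. Summing over $j$, this yields $\cale_n^p\asymp|\wh{S}_n-S^*|_n^2$ and $\cale_n^{\DL,p}\asymp|\wh{S}_n^\DL-S^*|_n^2$.

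The second step handles the projection. By the triangle inequality for $|\cdot|_n$, which is a weighted $L^2$ seminorm for the empirical measure,
\[
|\wh{S}_n-S^*|_n^2\le2|\wh{S}_n-\wh{S}_n^\DL|_n^2+2|\wh{S}_n^\DL-S^*|_n^2.
\]
Since $S^*|_\calx\in\sfB$ by [A1](iii), we have pathwise $\inf_{S\in\sfB}|S-\wh{S}_n^\DL|_n^2\le|S^*-\wh{S}_n^\DL|_n^2$. Hence
\[
|\wh{S}_n-\wh{S}_n^\DL|_n^2\le\Big(|\wh{S}_n-\wh{S}_n^\DL|_n^2-\inf_{S\in\sfB}|S-\wh{S}_n^\DL|_n^2\Big)+|\wh{S}_n^\DL-S^*|_n^2.
\]
Taking expectations and using (\ref{202606251832}) gives $E|\wh{S}_n-S^*|_n^2\le2\wh{\Delta}_n+4E|\wh{S}_n^\DL-S^*|_n^2$. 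Combining this with the compatibility bounds gives (\ref{202602100856}).

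The only delicate point is the uniform equivalence in the first step. One must check that $\wh{S}_n^\DL\in\mathfrak{F}_n\subset\bbS$ and $\wh{S}_n\in\sfB\subset\bbS$ both share the same eigenvalue bounds, which holds since the bounds in (\ref{202505050251}) are uniform over $\bbS$. One must also check that the infimum in $\wh{\Delta}_n$ is measurable, or else interpret it pathwise; this is harmless since only the pathwise inequality is used.
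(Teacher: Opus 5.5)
Your proposal is correct and follows essentially the same route as the paper. You use the compatibility (\ref{202505050127}) to pass between $U$ and $|S-S^*|^2\calw$, the bound $|\wh{S}_n-S^*|_n^2\le 2|\wh{S}_n-\wh{S}_n^\DL|_n^2+2|\wh{S}_n^\DL-S^*|_n^2$, and $S^*|_\calx\in\sfB$ to control the infimum in $\wh{\Delta}_n$, giving $E[|\wh{S}_n-S^*|_n^2]\le 4E[|\wh{S}_n^\DL-S^*|_n^2]+2\wh{\Delta}_n$, exactly as the paper does.
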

\proof
By using the compatibility discussed in Section \ref{202604101629}, 
we have 
\beas
\bfR_n^p
&\simleq&
E\big[\big|\wh{S}_n-S^*\big|_n^2\big]
\nn\\&\leq&
2E\big[\big|\wh{S}_n-\wh{S}^\DL_n\big|_n^2\big]+2E\big[\big|\wh{S}^\DL_n-S^*\big|_n^2\big]
\nn\\&\leq^{\fred{(\ref{202606251832})}}&
2E\big[\big|S^*-\wh{S}^\DL_n\big|_n^2\big]{\colorb +{\colorro2}\wh{\Delta}_n}+2E\big[\big|\wh{S}^\DL_n-S^*\big|_n^2\big]
\nn\\&{\fred=}&
4E\big[\big|\wh{S}^\DL_n-S^*\big|_n^2\big]{\colorb +{\colorro2}\wh{\Delta}_n}
\nn\\&\simleq&
\bfR_n^{\DL,p}{\colorb +\wh{\Delta}_n}.
\eeas
\qed

\subsection{Estimation of $\bfR_n^{\DL,p}$ by $\bfR_n^\DL$}

We are going into Step (\ref{202605140833}) of the Strategy mentioned in Section \ref{202606261851}. 

%
{\cred 
Write 
\bea\label{202606291544}
(L^Xf)_t &=& \partial f(X_t)[{\cred b_t^X}]+\half\partial^2f(X_t)[({\cred\sigma^X_t})^{\otimes2}]
\eea
for functions $f$ of class $C^2$. 
}

{\cred 
Let $\check{\calx}_0$ be an open set in $\bbR^\sfd$ such that $\calx\subset\check{\calx}_0\subset\ol{\check{\calx}_0}\subset\check{\calx}$. }%
For a stopping time $\tau$ and a set $A\in\calf_\tau$, 
we define $\tau_A$ as $\tau_A=\tau1_A+\infty1_{A^c}$. Then the random time $\tau_A$ is a stopping time. 
}
{\colorro
Thus, the random time
\beas 
\varpi_j\yeq\varpi^n_j
&=& 
\big(\tj\wedge\inf\big\{t\geq\tjm;\>X_t\not\in{\cred\check{\calx}_0}\big\}\big)_{\{\calw(X_\tjm)>0\}}\wedge(\tj)_{\{\calw(X_\tjm)=0\}}
\nn\\&=&
\big(\tj\wedge\inf\big\{t\geq\tjm;\>X_t\not\in{\cred\check{\calx}_0}\big\}\big)1_{\{\calw(X_\tjm)>0\}}+\tj1_{\{\calw(X_\tjm)=0\}}
\eeas
is a stopping time for every $j\in\bbI_n$ and $n\in\bbN$. 
}

\begin{lemma}\label{202602132307}
\bd
\im[(a)] 
{\colorro
It holds that 
\beas
\big(h^{-1}(\Delta_jY)^{\otimes2}-S^*(X_\tjm)\big)\calw(X_\tjm)1_{\{\varpi_j=\tj\}}
&=&
\big(
M^n_j(\tj)+\dot{r}^n_j
\big)\calw(X_\tjm)1_{\{\varpi_j=\tj\}}
\eeas 
}
for 
\bea\label{202602130220}
M^n_j(u)
&=&
h^{-1}\bigg\{\bigg(\int_\tjm^u\sigma^*(X_t)dw_t\bigg)^{\otimes2}-\int_\tjm^uS^*(X_t)dt\bigg\}
\nn\\&&
+2b^*(X_\tjm)\wt{\otimes}\int_\tjm^u\sigma^*(X_t)dw_t
+h^{-1}\int_\tjm^u(\tj-t)\partial S^*(X_t){\colorro [{\cred\sigma^X_t}dw_t^X]}
\eea
and a random variable $\dot{r}^n_j$ taking values in $\bbR^\sfm\otimes\bbR^\sfm$ such that 
\bea\label{202606301728}
\sup_{n\in\bbN}\sup_{j\in\bbI_n}\big(h^{-1}\big\||\dot{r}^n_j|
{\colorro1_{\{\varpi_j=\tj\}}}
\big\|_p\big) &<& \infty
\eea
for every $p>1$. Here $\wt{\otimes}$ denotes the symmetrized tensor product.
(Note that $\tj-t$ is not $u-t$ in the integrand of the last term on the right-hand side of (\ref{202602130220}). 
It is to make a martingale. 
{\colorro Also remark that $\partial S^*(X_t)$ in (\ref{202602130220}) makes sense on $\{\calw(X_\tjm)>0\}\cap\{\varpi_j=\tj\}$.}
)

\im[(b)] 
{\cred The difference} %
$\cale_n^\DL-\cale_n^{\DL,p}$ admits the equality 
\bea\label{202602120106}
{\colorro1_{\cap_{j\in\bbI_n}\{\varpi_j=\tj\}}}
\big(\cale_n^\DL-\cale_n^{\DL,p}\big)
&=&
{\colorro1_{\cap_{j\in\bbI_n}\{\varpi_j=\tj\}}}\big(
\bbJ_n
+r_n\big),
\eea
where 
\bea\label{202602120111}
\bbJ_n
\yeq
\frac{1}{2n} \sum_{j\in\bbI_n}\big(\wh{S}_n^\DL(X_\tjm)^{-1}-S^*(X_\tjm)^{-1}\big)
 \big[M^n_j(\tj{\colorro\wedge\varpi_j})\big]
\onecalw(X_\tjm)
\eea
%
{\colorro and 
\bea\label{202606301704}
r_n
&=&
\frac{1}{2n} \sum_{j\in\bbI_n}\big(\wh{S}_n^\DL(X_\tjm)^{-1}-S^*(X_\tjm)^{-1}\big)
 \big[\dot{r}^n_j1_{\{\varpi_j=\tj\}}\big]
\onecalw(X_\tjm).
\eea
}
\ed
\end{lemma}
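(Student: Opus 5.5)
Part (b) is a direct consequence of (a), so the substance lies in (a). We start from $\Delta_jY=\int_\tjm^\tj b^*(X_t)dt+\int_\tjm^\tj\sigma^*(X_t)dw_t$ and write $A=\int_\tjm^\tj b^*(X_t)dt$ and $B=\int_\tjm^\tj\sigma^*(X_t)dw_t$. Then $h^{-1}(\Delta_jY)^{\otimes2}=h^{-1}B^{\otimes2}+2h^{-1}A\wt{\otimes}B+h^{-1}A^{\otimes2}$. Throughout we work on the event $\{\calw(X_\tjm)>0\}\cap\{\varpi_j=\tj\}$. On this event $X_t\in\check{\calx}_0$ for $t\in[\tjm,\tj]$, so $b^*,S^*\in C^2$ can be evaluated along the path and Itô's formula applies. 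Outside $\{\calw(X_\tjm)>0\}$ both sides vanish because of the factor $\calw(X_\tjm)$, so there we can set $\dot r^n_j=0$.

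The decomposition proceeds in three steps.

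\emph{First}, we write $h^{-1}B^{\otimes2}=h^{-1}\{B^{\otimes2}-\int S^*(X_t)dt\}+h^{-1}\int_\tjm^\tj S^*(X_t)dt$.

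\emph{Second}, we apply Itô's formula to $S^*(X_t)$ on $[\tjm,t]$:
\[
S^*(X_t)=S^*(X_\tjm)+\int_\tjm^t(L^XS^*)_sds+\int_\tjm^t\partial S^*(X_s)[\sigma^X_sdw^X_s].
\]
Integrating over $t\in[\tjm,\tj]$ and using stochastic Fubini gives
\[
h^{-1}\int_\tjm^\tj S^*(X_t)dt=S^*(X_\tjm)+h^{-1}\int_\tjm^\tj(\tj-s)(L^XS^*)_sds+h^{-1}\int_\tjm^\tj(\tj-s)\partial S^*(X_s)[\sigma^X_sdw^X_s].
\]
The last term is exactly the third martingale term of $M^n_j(\tj)$. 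The drift term is $O(h)$ in $L^p$, by [A1](ii) together with the boundedness of $\partial S^*,\partial^2S^*$ on $\ol{\check{\calx}_0}$.

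\emph{Third}, we write $2h^{-1}A\wt{\otimes}B=2b^*(X_\tjm)\wt{\otimes}B+2h^{-1}(A-hb^*(X_\tjm))\wt{\otimes}B$. Itô's formula applied to $b^*(X_t)$ gives $\|h^{-1}(A-hb^*(X_\tjm))1\|_p=O(h^{1/2})$. Since $\|B1\|_p=O(h^{1/2})$, the product is $O(h)$. Similarly, $h^{-1}A^{\otimes2}1=O(h)$ in $L^p$.

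Collecting all remainders into $\dot r^n_j$ gives (\ref{202606301728}).

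The main obstacle is controlling $L^p$ norms of the indicator-localized quantities uniformly in $j$ and $n$, for every $p>1$. Assumption [A1](ii) only bounds $|b^X_t|1_{\{X_t\in\check{\calx}\}}$, so the localization must be applied inside the integrals. To do this we replace the indicator $1_{\{\varpi_j=\tj\}}$ by integrands stopped at $\varpi_j$, since $1_{\{\varpi_j=\tj\}}\le1$ and the integrands coincide on that event. For example, $B1_{\{\varpi_j=\tj\}}=1_{\{\varpi_j=\tj\}}\int_\tjm^{\varpi_j}\sigma^*(X_t)1_{\{\calw(X_\tjm)>0\}}dw_t$, where $\sigma^*$ is bounded on $\ol{\check{\calx}_0}$. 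We then apply Burkholder--Davis--Gundy to the stopped integrals. This step also uses the continuity of $\sigma^*$ on $\check{\calx}$ from [A1](iii), and the fact that $X_t\in\check{\calx}_0\subset\check{\calx}$ for $t<\varpi_j$, which lets [A1](ii) be applied to $b^X,\sigma^X$ via $1_{\{X_t\in\check{\calx}\}}$.

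For (b), we subtract (\ref{202602120134}) from (\ref{202602120134a}). Since $(S^{-1}-S^{*-1})[\cdot]$ is linear, this gives $\frac{1}{2n}\sum_j(\wh S^\DL_n{}^{-1}-S^{*-1})(X_\tjm)[h^{-1}(\Delta_jY)^{\otimes2}-S^*(X_\tjm)]\calw(X_\tjm)$. We multiply by $1_{\cap_j\{\varpi_j=\tj\}}$, insert (a), and note that $M^n_j(\tj)=M^n_j(\tj\wedge\varpi_j)$ on $\{\varpi_j=\tj\}$. This yields (\ref{202602120106}) with $\bbJ_n$ and $r_n$ as defined.
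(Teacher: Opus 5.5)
Your proposal is correct and follows essentially the same route as the paper. The paper likewise writes $\Delta_jY=\int_\tjm^\tj\sigma^*(X_t)dw_t+hb^*(X_\tjm)+\wt{r}^n_j$ with $\wt{r}^n_j=O(h^{3/2})$ in $L^p$ (your split of $A$ into $hb^*(X_\tjm)$ plus a remainder amounts to the same thing), expands $\int_\tjm^\tj S^*(X_t)dt-hS^*(X_\tjm)$ by It\^o's formula into the double integral that yields the $(\tj-t)$-weighted martingale term, and reads off (b) from (a). Your explicit localization, stopping the integrands at $\varpi_j$ before applying Burkholder--Davis--Gundy together with [A1](ii), simply spells out what the paper leaves implicit.
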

\proof 
{\cred By $[A1]$ (iii) regarding $b^*$, }%
{\colorro on the event $\{\varpi_j=\tj\}\cap\{\calw(X_\tjm)>0\}$,}
the increment $\Delta_jY$ has the decomposition
\beas 
\Delta_jY
&=& 
\int_\tjm^\tj\sigma^*(X_t)dw_t+\int_\tjm^\tj b^*(X_t)dt
\nn\\&=&
\int_\tjm^\tj\sigma^*(X_t)dw_t+b^*(X_\tjm)h
+\wt{r}_j^n,
\eeas
where 
\beas 
\wt{r}_j^n
&=&
\int_\tjm^\tj \int_\tjm^t \partial b^*(X_s)\sigma^*(X_s)dw_sdt+\int_\tjm^\tj \int_\tjm^t L^{{\cred X}}{\cred b^*}(X_s)dsdt
\eeas
{\fred with $L^X$ defined in (\ref{202606291544}). }%
{\colorro
Extend $\wt{r}_j^n$ as $\wt{r}_j^n=0$ on 
$\big(\{\varpi_j=\tj\}\cap\{\calw(X_\tjm)>0\}\big)^c$. 
Then, {\cred thanks to $[A1]$ (ii) and (iii) (the condition regarding $b^*$),} we have the estimate
}
\beas 
\sup_{n\in\bbN}\sup_{j\in\bbI_n}\big(h^{-3/2}\big\||\wt{r}_j^n|{\colorro1_{\{\varpi_j=\tj\}}}\big\|_p\big) &<& \infty
\eeas
for every $p>1$.%
%

%
{\colorro On the event $\{\varpi_j=\tj\}\cap\{\calw(X_\tjm)>0\}$,} 
{\cred by using $[A1]$ (iii) for $S^*$, }%
we have
%
\beas 
\int_\tjm^\tj S^*(X_t)dt-hS^*(X_\tjm) 
&=& 
\int_\tjm^\tj\int_\tjm^t\partial S^*(X_s)
{\colorro [{\cred\sigma^X_s}dw_s^X]}dt+\int_\tjm^\tj\int_\tjm^t {\colorro( L^XS^*)_s}dsdt.
\eeas
%
Thus we obtain (a) {\colorro by using (\ref{202606291544}), and (\ref{202606291546})} {\cred (i.e., $[A1]$ (ii) on the event)}. 
The property (b) follows from (a). 
\qed\halflineskip

We will estimate $E[\bbJ_n]$. 
For each $j\in\bbI_n$, let 
\beas 
{\colorb\dot{\tau}_j} \yeq {\colorb\dot{\tau}_j^n} 
&=&
{\colorro\varpi_j\wedge}
\inf\bigg\{t\geq\tjm;\>
{\cred |\sigma^*(X_t)|+
|\partial S^*(X_t)|>\sup_{x\in\text{supp }\calw}\big(|\sigma^*(x)+|\partial S^*(x)|\big)+1}\bigg\}
\nn\\&&
\wedge\inf\bigg\{t\geq\tjm;\>\bigg|h^{-1/2}\int_\tjm^t\sigma^*(X_s)dw_s\bigg|\geq\Csab(\log n)^{1/2}\bigg\}
\nn\\&&
\wedge\inf\bigg\{t\geq\tjm;\>\bigg|h^{-1}\int_\tjm^t(\tj-s)\partial S^*(X_s){\colorro [{\cred\sigma^X_s}dw_s^X]}\bigg|\geq
{\colorro\log n}
\bigg\}
\nn\\&&
{\colorro
\wedge\inf\bigg\{t\geq\tjm;\>\int_\tjm^t|{\cred\sigma^X_s}|^2ds>\log n\bigg\}.
}
\eeas
%
%
{\colorb
Define $\tau_j=\tau_j^n$ as 
$
\tau_j 
=
t_j\wedge\big(\dot{\tau}_j\big)_{\{\calw(X_\tjm)\not=0\}}
$, 
in other words, 
$
\tau_j 
=
\dot{\tau}_j1_{\{\calw(X_\tjm)\not=0\}}+t_j1_{\{\calw(X_\tjm)=0\}}
$. 
The random time $\tau_j$ is a stopping time since 
$\{\calw(X_\tjm)=0\}\in\calf_\tjm\subset\calf_{\dot{\tau}_j}$. 
}%

If we take a sufficiently large constant $\Csab$, then 
\bea\label{202607031437}
{\colorro P\big[\cup_{j\in\bbI_n}\{\varpi_j<\tj\}\big] 
\yleq\> }
P\big[\cup_{j\in\bbI_n}\{\tau_j<\tj\}\big]
\yeq
{\colorro O(n^{-L})}
\eea
{\colorro as $n\to\infty$, for any positive constant $L$. 
Here we used 
{\cred (\ref{202606291521}) and $[A1]$ (ii)-(iii) regarding the continuity of $\sigma^*$ and $\partial S^*$ on $\ol{\check{\calx}_0}$. }%
}
Thus, 
\bea\label{202602130124}&&
E\big[|\bbJ_n|1_{\{\cup_{j\in\bbI_n}\{\tau_j<\tj\}}\big]
\nn\\&\leq&
\bigg(\sup_{{\cred S\in\bbS}\atop x\in\text{supp }\calw}|S(x)^{-1}{\cred\calw(x)}|\sup_{n\in\bbN}\sup_{j\in\bbI_n}\big\||M^n_j(\tj{\colorro\wedge\varpi_j})|\big\|_2\bigg)
\bigg(P\big[\cup_{j\in\bbI_n}\{\tau_j<\tj\}\big]\bigg)^{1/2}
\nn\\&\simleq&
\Csu n^{-1}\quad(n\in\bbN)
\eea
for some constant $\Csu$, 
{\cred where $[A1]$ (ii) is used to estimate $M^n_j(\tj\wedge\varpi_j)$.}
%

Let 
\bea\label{202602130215}
\bbJ_n^*(t)
&=&
\frac{1}{2n} \sum_{j\in\bbI_n} 1_{\{t\geq\tjm\}}
\big(\wh{S}_n^\DL(X_\tjm)^{-1}-S^*(X_\tjm)^{-1}\big)
 \big[M^n_j(t\wedge\tau_j)\big]
\onecalw(X_\tjm).
\eea

From (\ref{202602130220}), {\cred there exists constant $\Csac$ such that}
\bea\label{202602150412}
{\cred\|\calw\|_\infty}
\sup_{j\in\bbI_n}\sup_{t\in[\tjm,\tj]}\big\||M^n_j(t\wedge\tau_j)| {\colorro1_{\text{supp }\calw}(X_\tjm)} \big\|_\infty
&\leq&
\Csac{\fred\sfm^{2}}(\Csab\vee1)^2(\log n+1)
\eea
for all $n\in\bbN$. 

Let $\sfk$ be a $\{1,...,\caln_n\}$-valued random number such that 
\bea\label{202604101633}
\sup_{x\in\text{supp }\calw}{\cred\big\|}\wh{S}_n^\DL(x)-S_\sfk(x){\cred\big\|}<\delta_n. 
\eea
Moreover, let 
\beas
\bbJ_{n,k}^*(t)
&=&
\frac{1}{2n} \sum_{j\in\bbI_n} 1_{\{t\geq\tjm\}}
\big(S_k(X_\tjm)^{-1}-S^*(X_\tjm)^{-1}\big)
 \big[M^n_j(t\wedge\tau_j)\big]
\onecalw(X_\tjm)
\eeas
{\cred for $k\in\{1,...,\caln_n\}$.} 
Then 
\bea\label{202602130302}
\sup_{t\in\bbR_+}\big|\bbJ^*_n(t)-\bbJ^*_{n,\sfk}(t)\big|
&\leq^{(\ref{202602150412}), (\ref{202604101633})}&
\Csac
{\fred m^2}
(\Csab\vee1)^2
(\log n+1)\delta_n\qquad(n\in\bbN).
\eea

{\colorro Recall that $\lambda_n$ is defined by (\ref{202606291716}). }
\begin{lemma}\label{202602130518}
There exists a constant $\Csah$ such that 
\bea\label{202602130517}
E\big[\big|{\colorb\big(|S_\sfk-S^*|_n\vee\lambda_n\big)^{-1}}\bbJ_{n,\sfk}^{*}(nh)\big|^2\big]
&\leq&
\Csah n^{-1}(\log n)\log\caln_n
\eea
for all $n\in\bbZ_{\geq2}$. 
\end{lemma}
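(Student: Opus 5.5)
The plan is to use a union bound over the finite net $\{S_k\}_{k=1}^{\caln_n}$ together with an exponential martingale (Bernstein-type) inequality for each fixed $k$. Since $\sfk$ is random, I bound
\[
E\big[\big|(|S_\sfk-S^*|_n\vee\lambda_n)^{-1}\bbJ^*_{n,\sfk}(nh)\big|^2\big]
\le E\Big[\max_{k\le\caln_n}\big|(|S_k-S^*|_n\vee\lambda_n)^{-1}\bbJ^*_{n,k}(nh)\big|^2\Big].
\]
This removes the dependence of $S_\sfk$ on the data. It then suffices to prove a tail bound, uniform in $k$, of the form
\[
P\big[|\bbJ^*_{n,k}(nh)|\ge x\,(|S_k-S^*|_n\vee\lambda_n)\big]\le 2\exp\big(-c\,n x^2/\log n\big)
\]
for $x$ in the relevant range. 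Integrating $P[\max_k|\cdot|^2>y]\le \min(1,2\caln_n e^{-cny/\log n})$ over $y\ge0$ then yields $\simleq n^{-1}(\log n)\log\caln_n$. Here $\caln_n\ge2$ and (\ref{202602140148}) are used to absorb constants, and the trivial bound from (\ref{202602150412}) handles the region where $x$ is large.

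For fixed $k$, $t\mapsto\bbJ^*_{n,k}(t)$ is a continuous $\bbF$-martingale. Each $M^n_j(t\wedge\tau_j)$ is a stochastic integral on $[\tjm,\tj]$ stopped at $\tau_j$: the quadratic term is, by It\^o's formula, $2h^{-1}\int(\int_\tjm^s\sigma^*dw)\sigma^*dw$. The coefficient $c_{j,k}=(S_k^{-1}-S^*{}^{-1})(X_\tjm)\calw(X_\tjm)$ is $\calf_\tjm$-measurable. The intervals are disjoint, so the quadratic variation is a sum over $j$. Before $\tau_j$, the integrands are bounded by the localization: $|h^{-1/2}\int\sigma^*dw|\le \Csab(\log n)^{1/2}$, $\sigma^*$ and $\partial S^*$ are bounded, and $\int|\sigma^X|^2\le\log n$. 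This gives $d\langle M^n_j\rangle_t\simleq h^{-1}(\log n)\,dt$, except for the $\partial S^*$ term, whose contribution on an interval of length $h$ is at most $h^{-2}h^2\log n$, i.e.\ $h^{-1}\cdot h\log n$ overall. Hence
\[
\langle \bbJ^*_{n,k}\rangle_{nh}\simleq n^{-2}\sum_j|c_{j,k}|^2\log n\simleq n^{-1}(\log n)\,|S_k-S^*|_n^2 .
\]
The last step uses (\ref{202505050251}), which gives $|S_k^{-1}-S^*{}^{-1}|\simleq|S_k-S^*|$, and $\calw^2\le\|\calw\|_\infty\calw$.

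With $v=n^{-1}(\log n)(|S_k-S^*|_n\vee\lambda_n)^2$, continuity of the martingale gives the Bernstein inequality for continuous martingales:
\[
P[\bbJ^*_{n,k}(nh)\ge z,\ \langle\bbJ^*_{n,k}\rangle_{nh}\le Cv]\le e^{-z^2/(2Cv)} .
\]
There is no jump term, which is why continuity is essential. The obstacle is that $|S_k-S^*|_n$ is random, so the normalization is not deterministic. I would handle this by peeling. Partition according to $|S_k-S^*|_n\vee\lambda_n\in(2^{i-1}\lambda_n,2^i\lambda_n]$, for $i=1,\dots,O(\log n)$; the upper limit is finite because $|S_k-S^*|_n$ is bounded. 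On each shell, the bracket is dominated by the deterministic level $v_i=n^{-1}(\log n)4^i\lambda_n^2$. Applying the inequality with $z=x2^{i-1}\lambda_n$ gives $e^{-cnx^2/\log n}$ per shell. The factor $O(\log n)$ coming from summing over shells is absorbed using (\ref{202602140148}), since $\log\log n\simleq\log\caln_n$.

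A further subtlety is that $|S_k-S^*|_n$ depends on $X_\tjm$ up to $j=n$, so it is not $\calf_{\tjm}$-adapted. The bracket bound is nevertheless pathwise: $\langle\cdot\rangle_{nh}\le Cv_i$ holds on the shell event. The Bernstein inequality requires only the event $\{\langle\cdot\rangle_{nh}\le Cv_i\}$, not adaptedness of the shell event. The shells therefore pose no problem, and this is the step I would check most carefully.
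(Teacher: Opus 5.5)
Your proposal is correct and follows essentially the same route as the paper. Both arguments use a union bound over the $\delta_n$-net and the bracket bound $\langle\bbJ^*_{n,k}\rangle_{nh}\lesssim n^{-1}(\log n)\,|S_k-S^*|_n^2$ coming from the localization $\tau_j$. Both then apply dyadic peeling of the random normalization, whose $O(\log n)$ factor is absorbed via (\ref{202602140148}). Finally, both use a Gaussian tail for a continuous martingale with controlled bracket: your inequality $P[M_t\ge z,\ \langle M\rangle_t\le v]\le e^{-z^2/(2v)}$ plays the role of the paper's Dambis--Dubins--Schwarz time change plus Brownian tail estimate.

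The only difference is cosmetic. You peel $|S_k-S^*|_n\vee\lambda_n$ from the level $\lambda_n$ upward, which spares you the paper's separate treatment of $\{|S_k-S^*|_n<2^{-\nu_n}\}$ by the trivial bound from (\ref{202602150412}). Make the lowest shell include the event $\{|S_k-S^*|_n\le\lambda_n\}$, since your half-open shells $(2^{i-1}\lambda_n,2^i\lambda_n]$, $i\ge1$, omit it.
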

\proof
For every $n\in\bbN$ and $k\in\{1,...,\caln_n\}$, 
{\colorb the process} $(\bbJ^*_{n,k}(t))_{t\in\bbR+}$ 
is a continuous local martingale with respect to $\bbF$ 
{\cred whose quadratic variation process $\big\langle \bbJ^*_{n,k} \big\rangle$ satisfies 
\beas 
\big\langle \bbJ^*_{n,k} \big\rangle_t
&\leq&
\Csad n^{-2}\sum_{j\in\bbI_n}1_{\{t\geq\tjm\}}\big|S_k(X_\tjm)-S^*(X_\tjm)\big|^2\onecalw(X_\tjm)
\nn\\&&\hspace{50pt}\times
\bigg\{h^{-2}\int_\tjm^{t\wedge\tau_j}|\sigma^*(X_s)|^2\bigg|\int_\tjm^s\sigma^*(X_r)dw_r\bigg|^2ds
\nn\\&&\hspace{70pt}
+|b^*(X_\tjm)|^2\int_\tjm^{t\wedge\tau_j}|\sigma^*(X_s)|^2ds
\nn\\&&\hspace{70pt}
+h^{-2}\int_\tjm^{t\wedge\tau_j}(\tj-s)^2\big|\partial S^*(X_s)\big|^2|\sigma^X_s|^2ds\bigg\},
\eeas
where $\Csad$ is a constant independent of $(n,k,t)$. 
}
%
%
%
{\cred By the definition of $\tau_j$,} we obtain 
\bea\label{202604101642}
\big\langle \bbJ^*_{n,k} \big\rangle_{nh}
&\leq&
\Csai  n^{-2}\sum_{j\in\bbI_n}\big|S_k(X_\tjm)-S^*(X_\tjm)\big|^2\onecalw(X_\tjm)\big(\log n+h\big)
\nn\\&=&
\Csai n^{-1} |S_k-S^*|_n^2\big(\log n+h\big)
\eea
for some constant $\Csai$ {\cred for all $n\in\bbN$.}

Define integers $\nu_0$ and $\nu_n$ ($n\in\bbN$) as 
$
2^{-\nu_0+1}> 2\sup_{x\in\text{supp }\calw}\big(|S(x)||\calw(x)^{1/2}|\big)$ and 
\beas
\nu_n\yeq \inf\big\{\nu;\> 2^{-\nu}\leq {\colorb\lambda_n^3(\log n)^{-1}
}
\big\} 
\quad(n\in\bbN),
\eeas
respectively. 
Then 
\bea\label{202604101641}
\nu_n\simleq\log n. 
\eea
{\colorro 
Indeed, we have 
\beas 
\lambda_n^3(\log n)^{-1}
&=&
n^{-3/2}(\log n)^{1/2}(\log\caln_n)^{3/2}
\ygeq 
n^{-2}
\eeas
for large $n$, since it is assumed that $\caln_n\geq2$. 

}

%
%
{\colorb
Since
\beas
|\bbJ_{n,k}^*(nh)|
&\simleq&
\frac{1}{n} \sum_{j\in\bbI_n} 
\big|S_k(X_\tjm)-S^*(X_\tjm)\big|
 |M^n_j(t\wedge\tau_j)|
 {\colorro \calw(X_\tjm)}
 \nn\\&\simleq&
 {\colorro
 \bigg\{\frac{1}{n}\sum_{j\in\bbI_n}\big|S_k(X_\tjm)-S^*(X_\tjm)\big|^2\calw(X_\tjm)\bigg\}^{1/2}
  }
 \nn\\&& {\colorro\times
  \bigg\{\frac{1}{n}\sum_{j\in\bbI_n} |M^n_j(t\wedge\tau_j)|^2\calw(X_\tjm)\bigg\}^{1/2}
 }
 \nn\\&\simleq^{(\ref{202602150412})}&
 \big|S_k-S^*\big|_n\log n, 
 \eeas
 we have
 \bea\label{202602150422}
1_{\{|S_k-S^*|_n<2^{-\nu_n}\}}|\bbJ_{n,k}^*(nh)|
&\leq&
\Csao \lambda_n^3
\eea
for some constant $\Csao$. 
}

Applying the martingale representation theorem with a Brownian motion $(B(t))_{t\in\bbR_+}$ to $(\bbJ^*_{n,k}(t))_{t\in\bbR+}$, we obtain 
\bea\label{202602132146}&&
P\big[|\bbJ^*_{n,k}(nh)|>x|S_k-S^*|_n,{\>\colorb |S_k-S^*|_n\geq2^{-\nu_n}}\big]
\nn\\&\leq&
\sum_{\nu=\nu_0}^{\nu_n}P\big[|\bbJ^*_{n,k}(nh)|>x|S_k-S^*|_n,\>
|S_k-S^*|_n\in[2^{-\nu},2^{-\nu+1})\big]
\nn\\&\leq&
\sum_{\nu=\nu_0}^{\nu_n}P\big[|\bbJ^*_{n,k}(nh)|>2^{-\nu}x,\>
|S_k-S^*|_n\leq2^{-\nu+1}\big]
\nn\\&\simleq^{(\ref{202604101642})}&
\sum_{\nu=\nu_0}^{\nu_n}P\bigg[\sup\bigg\{
|B(t)|>2^{-\nu}x;\>t\in\big[0,\Csai 2^{-2\nu+2}{\colorb n^{-1}}(\log n+h)\big]\bigg\}
\bigg]
\nn\\&\simleq^{(\ref{202604101641})}&
\frac{n^{-1/2}\big(\log n\big)^{1/2}\log n}{x}
\exp\bigg(-\frac{\Csaf nx^2}{\log n }\bigg)
\nn\\&\leq&
\Csae 
\frac{n^{-1/2}\big(\log n\big)^{3/2}}{x}
\exp\bigg(-\frac{\Csaf nx^2}{\log n}\bigg)
\eea
for all $x>0$ and $n\in\bbZ_{\geq2}$, for some constant $\Csae$ independent of $n$ and $k\in\{1,...,\caln_n\}$. 
Here $\Csaf$ is a universal positive constant. 
{\colorb Consequently, 
\bea\label{202602150426}&&
P\big[|\bbJ^*_{n,k}(nh)|>x\big(|S_k-S^*|_n{\colorb\vee\lambda_n}\big)\big]
\nn\\&\leq&
P\big[|\bbJ^*_{n,k}(nh)|>x|S_k-S^*|_n,{\>\colorb |S_k-S^*|_n\geq2^{-\nu_n}}\big]
\nn\\&&
+P\big[|\bbJ^*_{n,k}(nh)|>x{\colorb\lambda_n},{\>\colorb |S_k-S^*|_n<2^{-\nu_n}}\big]
\nn\\&\leq^{(\ref{202602132146}),(\ref{202602150422})}&
\Csae 
\frac{n^{-1/2}\big(\log n\big)^{3/2}}{x}
\exp\bigg(-\frac{\Csaf nx^2}{\log n}\bigg)
+1_{\{\Csao \lambda_n^2>x\}}. 
\eea
}

For any positive constant $\Csag$ 
{\colorb satisfying 
\beas
\Csag\ygeq^{(\ref{202602140215})} \sup_{n\in\bbZ_{\geq2}}\Csao\lambda_n,
\eeas
(we are assuming $\caln_n\geq2$) 
}
it holds that 
\beas&&
E\big[\big|
{\colorb\big(|S_\sfk-S^*|_n{\colorb\vee\lambda_n}\big)^{-1}}
\bbJ_{n,\sfk}^{*}(nh)\big|^2\big]
\nn\\&=& 
\int_0^\infty 2xP\big[\big|\bbJ_{n,\sfk}^{*}(nh)\big|>x
(|S_\sfk-S^*|_n{\colorb\vee\lambda_n}\big)
\big] dx
\nn\\&\leq&
\int_0^{\Csag{\colorb\lambda_n}} 2xP\big[\big|\bbJ_{n,\sfk}^{*}(nh)\big|>x(|S_\sfk-S^*|_n{\colorb\vee\lambda_n}\big)\big] dx
\nn\\&&
+\int_{\Csag {\colorb\lambda_n}}^\infty 2x 
\caln_n \sup_{k=1,...,\caln_n}P\big[\big|\bbJ_{n,k}^{*}(nh)\big|>x(|S_{\cred k}-S^*|_n{\colorb\vee\lambda_n}\big)\big] dx.
\eeas
Therefore, 
\bea\label{202602130442}&&
E\big[\big|
{\colorb\big(|S_\sfk-S^*|_n{\colorb\vee\lambda_n}\big)^{-1}}
\bbJ_{n,\sfk}^{*}(nh)\big|^2\big]
\nn\\&\leq^{(\ref{202602150426})}&
\int_0^{\Csag {\colorb\lambda_n}} 2xdx
\nn\\&&
+2\Csae\>\caln_n \int_{\Csag {\colorb\lambda_n}}^\infty n^{-1/2}\big(\log n\big)^{3/2}
\exp\bigg(-\frac{\Csaf nx^2}{\log n}\bigg)dx
\nn\\&\simleq&
\Csag^2 n^{-1}(\log n)\log\caln_n
\nn\\&&
+2\Csae\>n^{-1}(\log n)^2(\log\caln_n)^{-1/2}\caln_n \exp\big(-\Csaf \Csag^2\log\caln_n\big)
\eea
due to the elementary estimate 
\beas
\int_B^\infty e^{-Cx^2}dx
&=&
\int_{C^{1/2}B}C^{-1/2}e^{-y^2}dy
\>\simleq\>
C^{-1}B^{-1}e^{-CB^2}
\eeas
for constants $B$ and $C$. 
We are assuming (\ref{202602140148}), therefore $\log n\simleq \caln_n^p$ for some $p>0$. 
Then, taking sufficiently large constants $\Csag$, we obtain 
the inequality (\ref{202602130517}). 
\qed\halflineskip

\begin{lemma}\label{202602140214}
There exists a constant $\Csal$ such that 
\bea\label{202602140207}
\bfR_n^{\DL,p}
&\leq&
\Csal\big[
\bfR_n^\DL
+n^{-1}(\log n)\log\caln_n+\delta_n\log n+{\colorro h_n^2}\big]
\eea
for all $n\in\bbN$. 
\end{lemma}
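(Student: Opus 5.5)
We write $\bfR_n^{\DL,p}=\bfR_n^\DL-E[\cale_n^\DL-\cale_n^{\DL,p}]$ and bound the expectation of the difference from below by a small fraction of $\bfR_n^{\DL,p}$ plus the remainder terms. We split $\Omega$ into $G_n=\cap_{j\in\bbI_n}\{\tau_j=\tj\}$ and its complement. On $G_n^c$ we use (\ref{202607031437}) together with the uniform $L^p$ bounds coming from $[A1]$ (iv), the eigenvalue bounds (\ref{202505050251}) and boundedness of $\calw$. By H\"older's inequality, the contribution of $G_n^c$ to $E[|\cale_n^\DL|+|\cale_n^{\DL,p}|]$ is $O(n^{-1})$. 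On $G_n\subset\cap_j\{\varpi_j=\tj\}$, Lemma \ref{202602132307} (b) gives $\cale_n^\DL-\cale_n^{\DL,p}=\bbJ_n+r_n$, and $M^n_j(\tj\wedge\varpi_j)=M^n_j(\tau_j)$. Hence the remaining task is to bound $E[|\bbJ^*_n(nh)|1_{G_n}]$ and $E[|r_n|1_{G_n}]$, with (\ref{202602130124}) absorbing the $G_n^c$ part of $\bbJ_n$.

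For $r_n$ we use the bound $|\wh{S}_n^\DL(x)^{-1}-S^*(x)^{-1}|\simleq|\wh{S}_n^\DL(x)-S^*(x)|$ on $\bbS$ and Cauchy--Schwarz:
\[
E|r_n|\simleq E\big[|\wh S_n^\DL-S^*|_n\,(n^{-1}\textstyle\sum_j|\dot r^n_j|^21_{\{\varpi_j=\tj\}})^{1/2}\big]\le \eta E[|\wh S_n^\DL-S^*|_n^2]+C_\eta h^2 ,
\]
using (\ref{202606301728}). For the martingale part we replace $\wh S_n^\DL$ by $S_\sfk$. By (\ref{202602130302}) this costs $O(\delta_n\log n)$. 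Then
\[
E|\bbJ^*_{n,\sfk}(nh)|\le E\big[(|S_\sfk-S^*|_n\vee\lambda_n)\,|(|S_\sfk-S^*|_n\vee\lambda_n)^{-1}\bbJ^*_{n,\sfk}(nh)|\big],
\]
and by Cauchy--Schwarz with Lemma \ref{202602130518} this is at most
\[
\big(E[|S_\sfk-S^*|_n^2]+\lambda_n^2\big)^{1/2}(\Csah\lambda_n^2)^{1/2}\le \eta E[|S_\sfk-S^*|_n^2]+C_\eta\lambda_n^2 .
\]
Next, $|S_\sfk-S^*|_n^2\le 2|\wh S_n^\DL-S^*|_n^2+C\delta_n^2$, and $\delta_n^2\simleq\delta_n\log n$ (we may assume $\delta_n$ is bounded).

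Finally, we use the compatibility (Section \ref{202604101629}) $E[|\wh S_n^\DL-S^*|_n^2]\simleq\bfR_n^{\DL,p}$. Collecting the pieces gives
\[
\bfR_n^{\DL,p}\le\bfR_n^\DL+C\eta\,\bfR_n^{\DL,p}+C_\eta(\lambda_n^2+\delta_n\log n+h^2+n^{-1}).
\]
Choosing $\eta$ small absorbs the middle term, since $\bfR_n^{\DL,p}$ is finite by boundedness. Also $n^{-1}\le\lambda_n^2$, because $\caln_n\ge2$. This yields (\ref{202602140207}).

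We expect the main obstacle to be the bookkeeping on the bad event. $\bbJ_n$ is built with the stopping times $\varpi_j$, while $\bbJ^*_n$ uses $\tau_j$, and the identity of Lemma \ref{202602132307} only holds on $\cap_j\{\varpi_j=\tj\}$. So $\cale_n^\DL$ itself, which involves $h^{-1}(\Delta_jY)^{\otimes2}$, must be controlled on $G_n^c$ through the $p_0>2$ moment in $[A1]$ (iv) and the superpolynomial decay in (\ref{202607031437}). The remaining steps reduce directly to Lemma \ref{202602130518}.
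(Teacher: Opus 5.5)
Your proposal is correct and is essentially the paper's proof. The bad event is discarded at cost $O(n^{-1})$ via (\ref{202607031437}) and $[A1]$ (iv), the good-event difference is written as $\bbJ_n+r_n$, $\wh{S}_n^\DL$ is replaced by $S_\sfk$ at cost $\delta_n\log n$, and Lemma \ref{202602130518}, (\ref{202606301728}) and compatibility are combined through Cauchy--Schwarz exactly as the paper does. The only differences are cosmetic: you split directly on $\cap_j\{\tau_j=t_j\}$, and you absorb the cross terms $(\bfR_n^{\DL,p})^{1/2}(\lambda_n+h)$ with Young's inequality and a small $\eta$, whereas the paper solves the resulting quadratic inequality in $(\bfR_n^{\DL,p})^{1/2}$.
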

\proof
Use Lemma \ref{202602132307} to obtain 
\bea\label{202602140054}
\bfR_n^\DL-\bfR_n^{\DL,p}
&=& 
E[\cale_n^\DL]-E[\cale_n^{\DL,p}]
\nn\\&=^{(\ref{202607031437}),\>[A1]\text{(iv)}}&
{\colorro E[1_{\cap_{j\in\bbI_n}\{\varpi_j=\tj\}}(\cale_n^\DL-\cale_n^{\DL,p})]+O(n^{-1})}
\nn\\&=^{(\ref{202602120106})}&
{\colorro E[1_{\cap_{j\in\bbI_n}\{\varpi_j=\tj\}}(\bbJ_n+r_n)]+O(n^{-1})}
\nn\\&=^{(\ref{202607031437})}&
E[\bbJ_n]+E[{\colorro r_n}]{\colorro+O(n^{-1})}
\eea
By (\ref{202602130215}), 
$\bbJ_n=\bbJ^*_n(nh)$ on the event $\big(\cup_{j\in\bbI_n}\{\tau_j<\tj\}\big)^c=\cap_{j\in\bbI_n}\{\tau_j=\tj\}$. 
Therefore, 
\bea\label{202602140147}
\big|E[\bbJ_n]\big|
&\leq&
E\big[|\bbJ_n|1_{\cup_{j\in\bbI_n}\{\tau_j<\tj\}}\big]
+\big|E\big[|\bbJ_n|1_{\cap_{j\in\bbI_n}\{\tau_j=\tj\}}\big]\big|
\nn\\&\leq&
E\big[|\bbJ_n|1_{\cup_{j\in\bbI_n}\{\tau_j<\tj\}}\big]
+E\big[|\bbJ^*_n(nh)|\big]
\nn\\&\simleq^{(\ref{202602130124}),(\ref{202602130302})}& 
E\big[\big|\bbJ^*_{n,\sfk}(nh)\big|\big]+n^{-1}+\delta_n\log n
\nn\\&\leq& 
{\colorb
E\big[\big(|S_\sfk-S^*|_n+\lambda_n\big)\big|\big(|S_\sfk-S^*|_n\vee\lambda_n\big)^{-1}\bbJ^*_{n,\sfk}(nh)\big|\big]}
+n^{-1}+\delta_n\log n
\nn\\&\simleq^{\text{Lemma \ref{202602130518}}}&
\big\{\big(E\big[|S_\sfk-S^*|_n^2\big]\big)^{1/2}+\lambda_n\big\}\lambda_n+n^{-1}+\delta_n\log n
\nn\\&\simleq^{(\ref{202604101633})}&
\big\{\big(E\big[|\wh{S}_n^\DL-S^*|_n^2\big]\big)^{1/2}+\delta_n\big\}\lambda_n+\lambda_n^2+\delta_n\log n
\nn\\&=&
\big(E\big[|\wh{S}_n^\DL-S^*|_n^2\big]\big)^{1/2}\lambda_n
+\delta_n\lambda_n
+\lambda_n^2+\delta_n\log n
\nn\\&\simleq^{(\ref{202602140215})}&
\big(E\big[|\wh{S}_n^\DL-S^*|_n^2\big]\big)^{1/2}\lambda_n
+\lambda_n^2+\delta_n\log n.
\eea

Compatibility (\ref{202505050127}) combined with (\ref{202602140147}) yields 
\bea\label{202602140156}
\big|E[\bbJ_n]\big|
&\simleq&
{\colorb
\big(\bfR_n^{\DL,p}\big)^{1/2}\lambda_n
+\lambda_n^2+\delta_n\log n.
}
\eea

{\colorro
We estimate $E[|r_n|]$ for $r_n$ of (\ref{202606301704}) as follows. 
First, remark that $|{\tt A}[{\tt B}]|\leq|{\tt A}||{\tt B}|$ for matrices ${\tt A}$ and ${\tt B}$ of the same size, and that 
there exists a constant $\Csax$ such that 
$|S(x)^{-1}-S^*(x)^{-1}|\leq\Csax|S(x)-S^*(x)|$ for all $x\in\calx$ and $S\in\bbS$. 
Then 
\bea\label{202605151523}
E[|r_n|]
&\leq&
E\bigg[\frac{1}{2n} \sum_{j\in\bbI_n}\big|\wh{S}_n^\DL(X_\tjm)^{-1}-S^*(X_\tjm)^{-1}\big|
 \big|\dot{r}^n_j1_{\{\varpi_j=\tj\}}\big|
\onecalw(X_\tjm)\bigg]
\nn\\&\simleq&
\big(E\big[|\wh{S}_n^\DL-S^*|_n^2\big]\big)^{1/2}
\bigg(E\bigg[n^{-1}\sum_{j\in\bbI_n}|\dot{r}^n_j1_{\{\varpi_j=\tj\}}|^2\bigg]\bigg)^{1/2}
\nn\\&\simleq^{(\ref{202505050127}),(\ref{202606301728})}&
\big(\bfR_n^{\DL,p}\big)^{1/2}h.
\eea
}

From (\ref{202602140054}), (\ref{202602140156}) {\colorro and (\ref{202605151523})}, we obtain 
\bea\label{202602140206}
\bfR_n^{\DL,p}-\bfR_n^\DL
&\leq&
\big|E[\bbJ_n]\big|+{\colorro E[|r_n|]}{\cred\>+\>O(n^{-1})}
\nn\\&\simleq&
{\colorb
\big(\bfR_n^{\DL,p}\big)^{1/2}{\colorro(}\lambda_n{\colorro+h)}
+\lambda_n^2+\delta_n\log n
}
\eea
{\colorro since $n^{-1}\ll\lambda_n^2$ for large $n$. }%
Solve the inequality (\ref{202602140206}) in $\big(\bfR_n^{\DL,p}\big)^{1/2}$ to obtain 
\beas
\bfR_n^{\DL,p}
&\simleq&
\bfR_n^\DL
+n^{-1}(\log n)\log\caln_n+\delta_n\log n+{\colorro h^2},
\eeas
which completes the proof. 
\qed\halflineskip

\subsection{Estimation of $\bfR_n^\DL$}
\begin{lemma}\label{202604111325}
{\colorb There exists a constant $\Csar$ independent of $n\in\bbN$ such that }
\bea\label{202602140240a}
\bfR_n^\DL
&\leq&
\Delta_n
{\colorro +\inf_{S\in\mathfrak{F}_n}E\big[\Phi_n(S,Z)-\Phi_n(S^*,Z)\big]}.
\eea
\end{lemma}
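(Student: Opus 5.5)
The bound follows directly from the definition of $\Delta_n$ in (\ref{202602140236}) and linearity of expectation; no probabilistic estimate is needed. I would take $\Csar=1$.

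First, decompose the empirical risk of the preliminary estimator. By definition (\ref{202602120134a}),
\begin{equation*}
\cale_n^\DL=\Phi_n(\wh{S}_n^\DL,Z)-\Phi_n(S^*,Z)
=\Big\{\Phi_n(\wh{S}_n^\DL,Z)-\inf_{S\in\mathfrak{F}_n}\Phi_n(S,Z)\Big\}
+\Big\{\inf_{S\in\mathfrak{F}_n}\Phi_n(S,Z)-\Phi_n(S^*,Z)\Big\}.
\end{equation*}
Take expectations. The first bracket has expectation exactly $\Delta_n$.

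For the second bracket, fix any $S\in\mathfrak{F}_n$. The pointwise inequality $\inf_{S'\in\mathfrak{F}_n}\Phi_n(S',Z)\leq\Phi_n(S,Z)$ gives
\begin{equation*}
E\Big[\inf_{S'\in\mathfrak{F}_n}\Phi_n(S',Z)-\Phi_n(S^*,Z)\Big]\leq E\big[\Phi_n(S,Z)-\Phi_n(S^*,Z)\big].
\end{equation*}
Taking the infimum over $S\in\mathfrak{F}_n$ then yields (\ref{202602140240a}) with $\Csar=1$.

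The only real issue is integrability: we must ensure all the expectations are finite and that splitting the expectation is legitimate. I would check this as follows.

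\begin{itemize}
\item By (\ref{202505050251}) and $\mathfrak{F}_n\subset\bbS$, the quantities $S(x)^{-1}$ and $\log\det S(x)$ are bounded uniformly in $S\in\mathfrak{F}_n\cup\{\wh{S}_n^\DL\}$ and $x\in\text{supp}\,\calw$. Here I assume $S^*|_\calx\in\bbS$, via $S^*|_\calx\in\sfB\subset\bbS$ from $[A1]$(iii).
\item $\calw$ is bounded.
\item $E[h^{-1}|\Delta_jY|^2]$ is bounded uniformly in $j$ and $n$ by $[A1]$(iv).
\end{itemize}

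Together these give $\sup_{S}|\Phi_n(S,Z)|\leq C\,n^{-1}\sum_j\big(h^{-1}|\Delta_jY|^2+1\big)$. This dominating variable is integrable, so all expectations above are finite and the decomposition is valid.

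For measurability of $\inf_{S\in\mathfrak{F}_n}\Phi_n(S,Z)$, either interpret the expectation as an outer expectation or use the implicit measurability already assumed in the definition of $\Delta_n$. Since $\Delta_n$ is defined with this infimum inside the expectation, the paper already presupposes it; this point is the only mild obstacle.
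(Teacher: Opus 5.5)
Your proof is correct and is essentially the paper's argument. The paper writes $\bfR_n^\DL=E[\Phi_n(\wh{S}_n^\DL,Z)-\Phi_n(S,Z)]+E[\Phi_n(S,Z)-\Phi_n(S^*,Z)]$ for an arbitrary $S\in\mathfrak{F}_n$, bounds the first term by $\Delta_n$, and takes the infimum over $S$; so the constant is indeed $1$, and your integrability and measurability remarks only make explicit what the paper leaves implicit.
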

\proof
The expected empirical error $\bfR_n^\DL$ is estimated by 
\beas 
\bfR_n^\DL
&=&
E[\cale_n^\DL]
\nn\\&=&
E\big[\Phi_n(\wh{S}_n^\DL,Z)-\Phi_n(S^*,Z)\big]
\nn\\&=&
E\big[\Phi_n(\wh{S}_n^\DL,Z)-\Phi_n(S,Z)\big]+E\big[\Phi_n(S,Z)-\Phi_n(S^*,Z)\big]
\nn\\&\leq&
\Delta_n+E\big[\Phi_n(S,Z)-\Phi_n(S^*,Z)\big]
\eeas
for any $S\in\mathfrak{F}_n$. 
Consequently, 
\beas\label{202602140240}
\bfR_n^\DL
&\leq&
\Delta_n+\inf_{S\in\mathfrak{F}_n}E\big[\Phi_n(S,Z)-\Phi_n(S^*,Z)\big].
\eeas
%
%
Therefore, the inequality (\ref{202602140240a}) has been obtained. 
\qed\halflineskip

\subsection{Estimate of $\bfR_n^p$}
\begin{lemma}\label{202602140253}
There exists a constant $\Csbh$ such that 
\bea\label{202602140245}
\bfR_n^p
&\leq&
\Csbh\bigg(
\Delta_n
{\colorb +\wh{\Delta}_n}
{\colorro +\inf_{S\in\mathfrak{F}_n}E\big[\Phi_n(S,Z)-\Phi_n(S^*,Z)\big]}
\nn\\&&\hspace{50pt}
+n^{-1}(\log n)\log\caln_n+\delta_n\log n+{\colorro h_n^2}\bigg)
\eea
for all $n\in\bbN$. 
\end{lemma}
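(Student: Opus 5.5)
This is Step (iv) of the strategy in Section \ref{202606261851}, so the plan is simply to chain the three preceding lemmas. First, Lemma \ref{202602120344} gives
\[
\bfR_n^p\le \Csaj\big(\bfR_n^{\DL,p}+\wh{\Delta}_n\big).
\]
Next, Lemma \ref{202602140214} bounds $\bfR_n^{\DL,p}$ by
\[
\Csal\big[\bfR_n^\DL+n^{-1}(\log n)\log\caln_n+\delta_n\log n+h_n^2\big].
\]
Finally, Lemma \ref{202604111325} bounds $\bfR_n^\DL$ by
\[
\Delta_n+\inf_{S\in\mathfrak{F}_n}E\big[\Phi_n(S,Z)-\Phi_n(S^*,Z)\big].
\]

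Substituting these in order gives
\[
\bfR_n^p\le \Csaj\Csal\Big(\Delta_n+\inf_{S\in\mathfrak{F}_n}E\big[\Phi_n(S,Z)-\Phi_n(S^*,Z)\big]+n^{-1}(\log n)\log\caln_n+\delta_n\log n+h_n^2\Big)+\Csaj\wh{\Delta}_n .
\]
One can therefore take $\Csbh=\Csaj\max(\Csal,1)$.

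The only point needing care is the sign of the infimum term. Since $S^*$ need not belong to $\mathfrak{F}_n$, $\inf_{S\in\mathfrak{F}_n}E[\Phi_n(S,Z)-\Phi_n(S^*,Z)]$ is not a priori nonnegative. Multiplying an inequality by the positive constant $\Csal$ is harmless regardless of sign. The step from coefficient $\Csaj\Csal$ to the common constant $\Csbh$ in front of the whole bracket, however, requires the bracket terms to be nonnegative or the constants to be matched exactly. If the infimum could be negative, I would keep the explicit constant $\Csaj\Csal$ in front of $\Delta_n$ and the infimum, or note that all other terms are nonnegative.

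A cleaner route is to note that the left-hand side of Lemma \ref{202602140214}, namely $\bfR_n^{\DL,p}$, is nonnegative. Hence the right-hand side of Lemma \ref{202602140214} is also nonnegative, and one can apply Lemma \ref{202604111325} inside it directly.

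No genuine obstacle is expected. The only bookkeeping point is that the constants in Lemmas \ref{202602120344} and \ref{202602140214} are uniform in $n$, which those lemmas assert, and that the inequalities hold for all $n\in\bbN$ (small $n$ being absorbed into the constants as elsewhere in the section).
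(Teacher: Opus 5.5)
Your proposal is correct and matches the paper's proof, which simply chains Lemmas \ref{202602120344}, \ref{202602140214} and \ref{202604111325}. Your extra care about the sign is also sound. Since $\bfR_n^{\DL,p}\geq 0$ (so the bracket after applying Lemma \ref{202604111325} is nonnegative) and $\wh{\Delta}_n\geq 0$ (because $\wh{S}_n$ takes values in $\sfB$), collapsing everything to the single constant $\Csbh=\Csaj\max(\Csal,1)$ is legitimate.
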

\proof
We obtain (\ref{202602140245}) by Lemmas \ref{202602120344}, \ref{202602140214} and \ref{202604111325}. 
\qed\halflineskip

\subsection{$\ol{\bfR}_n^p$ estimated by $\bfR_n^p$}

%

\begin{lemma}\label{202509220400}
{\cred Assume $[A1]$ $(i)$ and $[A2]$. 
}
\bd
\im[(a)]
{\cred 
Let $(\wt{\gamma}_n)_{n\in\bbN}$ be a sequence of positive numbers satisfying $\lim_{n\to\infty}\wt{\gamma}_n=0$. }%
{\cred 
Moreover, assume that 
\bea
\lim_{n\to\infty}T\>\wt{\gamma}_n^{\>\frac{{\fred\sfd}}{{\colorro2}\beta}}(\log n)^{-3} = \infty. 
\label{202604131857a}
\eea
}
{\cred Then} there exists a constant $\Csk$ 
independent of $n$,  such that 
\bea\label{202509240317}
\ol{\bfR}_n^p &\leq& \Csk\big(\bfR_n^p+{\cred\wt{\gamma}_n}\big)
\eea
for all $n\in\bbN$. 

\im[(b)] 
{\cblue Suppose that $T^{-1}(\log n)^3\to0$ as $n\to\infty$. Then, }%
{\cred 
for any sequence $\call=(\call_n)_{n\in\bbN}$ of positive numbers satisfying $\lim_{n\to\infty}\call_n=\infty$, 
there exists a constant $\Csay$ depending on $\ep$ and $\call$ but independent of $n$ such that 
\bea\label{202509240317b}
\ol{\bfR}_n^p &\leq& \Csay\big(\bfR_n^p
+T^{-2\beta/\sfd}(\log n)^{6\beta/\sfd}\call_n
\big)
\eea
for all $n\in\bbN$. 
}

\im[(c)] {\cred If {\cblue(\ref{202602140215})} and (\ref{202604131857}) hold, then 
there exists a constant $\Csaz$ 
independent of $n$ such that 
\bea\label{202509240317c}
\ol{\bfR}_n^p &\leq& \Csaz\big(\bfR_n^p+\gamma_n\big)
\eea
for all $n\in\bbN$. 

}

\ed
\end{lemma}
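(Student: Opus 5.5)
We reduce to Theorem \ref{202606111044} with $m=1$, $\xi_{n,j}=X_\tjm$, $\xi'_{n,j}=\ol{X}_\tjm$ (so $\xi_n=^d\xi_n'$), $\psi=1$ (or $\psi=1_{\check{\calx}}$), and $D_n=(\wh{S}_n-S^*)\calw^{1/2}$ viewed componentwise ($\sfd_D=\sfm^2$). Since $\wh{S}_n,S^*\in\sfB$ and $\calw^{1/2}\in C^{\ell,\beta-\ell}_b$ with support in $\calx$, each component lies a.s. in a fixed bounded set ${\tt B}$ of $C^{\ell,\beta-\ell}_b(\bbR^\sfd)$ and is supported in $\calx$. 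The compatibility (\ref{202505050127}) gives $U(x,S)\asymp|S(x)-S^*(x)|^2\calw(x)$ uniformly over $S\in\bbS$, hence $\ol{\bfR}_n^p\asymp\bbR_n(\xi_n')$ and $\bfR_n^p\asymp\bbR_n(\xi_n)$. Theorem \ref{202606111044} then yields
\[
\ol{\bfR}_n^p\ \leq\ C\Big[\bfR_n^p+d_n^{2\beta}+d_n^{-\sfd}\sup_{a\in\hat{\calx}}Q_{1,n}(a)\Big],
\]
with $\hat{\calx}$ chosen as an open set with $\calx\subset\hat{\calx}\subset\ol{\hat{\calx}}\subset\check{\calx}$. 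Everything then rests on choosing $d_n$ and bounding $Q_{1,n}(a)$.

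For $Q_{1,n}(a)$, set $f_a(x)=\varphi_{0,d_n/2}(x-a)$. By [A2](ii)-(v), $\mu_n(a):=n^{-1}\sum_jE[f_a(X_\tjm)]$ equals $n^{-1}\sum_j\int p^{X_\tjm}(a+(d_n/2)u)\prod{\tt K}_0(u_i)\,du$, which is bounded below by a positive constant uniformly in $a\in\hat{\calx}$ once $d_n$ is small. Here we use equicontinuity, the lower density bound on $\check{\calx}$, the rapid decay of ${\tt K}_0$, and $\sup p^{X_t}<\infty$ to control the tails. Meanwhile $\|f_a\|_\infty\simleq d_n^{-\sfd}$. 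So $Q_{1,n}(a)$ is the probability that $n^{-1}\sum_j\{f_a(X_\tjm)-Ef_a(X_\tjm)\}$ exceeds a fixed constant $c_\ep$. We bound it by a Bernstein inequality for geometrically $\alpha$-mixing arrays, e.g. Merlevède--Peligrad--Rio, or by hand via blocking with Berbee/Bradley coupling. Use blocks of time length $\asymp\log n$, i.e. $\asymp h^{-1}\log n$ observations, so that the coupling error $n\,\alpha^X(c\log n)$ is $O(n^{-L})$ by [A1](i). This gives roughly $T/\log n$ nearly independent block averages, each bounded by $d_n^{-\sfd}$, with variance $\simleq d_n^{-\sfd}$, because $E f_a^2\simleq d_n^{-\sfd}$ thanks to bounded densities. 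Bernstein then gives
\[
Q_{1,n}(a)\ \simleq\ \exp\big(-c\,T d_n^{\sfd}(\log n)^{-2}\big)+n^{-L},
\]
possibly with one more $\log n$ lost in the block structure. I expect this concentration step to be the main obstacle. One must handle non-stationarity, the dependence within blocks of size $h^{-1}\log n$, and the exact power of $\log n$ so that it matches the $(\log n)^{-3}$ in (\ref{202604131857a}) and the $(\log n)^{6\beta/\sfd}$ in (b). I would first try the Merlevède--Peligrad--Rio inequality applied to block sums, which produces a factor $(\log T)^2$ on top of the block length.

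For (a), choose $d_n=\wt{\gamma}_n^{1/(2\beta)}$, so that $d_n^{2\beta}=\wt{\gamma}_n$. Then $Td_n^{\sfd}(\log n)^{-3}=T\wt{\gamma}_n^{\sfd/(2\beta)}(\log n)^{-3}\to\infty$, so the exponential term is $\exp(-K_n\log n)$ with $K_n\to\infty$. This is $o(n^{-L})$ for every $L$, and since $d_n^{-\sfd}\leq T\leq n$ eventually, the factor $d_n^{-\sfd}$ is absorbed and $d_n^{-\sfd}\sup_aQ_{1,n}(a)=O(n^{-1})=o(\wt{\gamma}_n)$. Small $n$ are absorbed into the constant. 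For (b), take $d_n=(T^{-1}(\log n)^3\call_n')^{1/\sfd}$ with $\call_n'=\call_n^{\sfd/(2\beta)}\to\infty$. This tends to $0$ because $T^{-1}(\log n)^3\to0$, and one may slow $\call_n'$ if necessary. The same computation makes the $Q$ term negligible, leaving $d_n^{2\beta}=T^{-2\beta/\sfd}(\log n)^{6\beta/\sfd}\call_n$. Part (c) is (a) with $\wt{\gamma}_n=\gamma_n$, using (\ref{202602140215}) and (\ref{202604131857}).
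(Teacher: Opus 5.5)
Your proposal is correct and follows the paper's skeleton. Like the paper, you use Theorem~\ref{202606111044} with $m=1$, $D_n=(\wh{S}_n-S^*)\calw^{1/2}$, $\xi_{n,j}=X_\tjm$, $\xi'_{n,j}=\ol{X}_\tjm$, together with the compatibility (\ref{202505050127}) and $d_n=\wt{\gamma}_n^{1/(2\beta)}$. You also obtain a lower bound on the mean of the smoothed indicator, uniform in $a\in\hat{\calx}$ and $t$, from [A2](ii)--(v). For (b), your truncation of $\call_n$ is exactly the paper's choice $\wt{\gamma}_n=T^{-2\beta/\sfd}(\log n)^{6\beta/\sfd}[\call_n\wedge\{T(\log n)^{-3}\}^{\beta/\sfd}]$.

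You differ in the concentration step for $Q_{1,n}(a)$. The paper forms no contiguous blocks. It splits $\bbI_n$ into the $\lfloor 1/h\rfloor$ residue classes $\bbI_{n,r}$. Each class is a sequence of about $T$ observations at time spacing $\approx 1$, hence geometrically $\alpha$-mixing uniformly in $r$. The paper pays a union-bound factor $h^{-1}\le n$ and applies Theorem~2 of Merlev\`ede et al.\ directly, with $M\simleq d_n^{-\sfd}$ and $v^2\simleq d_n^{-\sfd}\log n$ (Rio's covariance inequality with $\varep=1/\log n$). The resulting exponent $Td_n^{\sfd}(\log n)^{-2}$ is what produces the $(\log n)^{-3}$ in (\ref{202604131857a}).

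Your contiguous blocks of time length $\asymp\log n$ instead need a coupling. It must be Bradley's lemma for the real-valued block averages, not Berbee's, because [A1](i) gives only $\alpha$-mixing. The coupling error is of order $(d_n^{-\sfd})^{1/2}\alpha^X(c\log n)$ per block, which is still $O(n^{-L})$ since $d_n^{-\sfd}\le n$. In return, the coupled block averages are independent, bounded by $Cd_n^{-\sfd}$, and have variance $\simleq d_n^{-\sfd}$ by Minkowski's inequality. The classical Bernstein inequality then gives an exponent of order $Td_n^{\sfd}/\log n$, one logarithm better than the paper's. So your hedge about losing an extra $\log n$ is unnecessary, and your route would even work under $Td_n^{\sfd}(\log n)^{-2}\to\infty$. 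The paper's route gives up this logarithm in exchange for a one-line application of a published inequality with no coupling bookkeeping.

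Three small repairs:
\begin{enumerate}
\item You write $d_n^{-\sfd}\sup_aQ_{1,n}(a)=O(n^{-1})=o(\wt{\gamma}_n)$, but this is false in general, since $\wt{\gamma}_n$ may be as small as roughly $n^{-2\beta/\sfd}$. Use the $O(n^{-L})$ bound you already have, with $L>2\beta/\sfd$, as the paper does via $d_n^{2\beta}\geq n^{-2\beta/\sfd}$.
\item With $\psi=1$, the variance bound and the lower bound on the mean also require the contribution of $\{X_t\notin\check{\calx}\}$, where no density is assumed, to be $O(d_n^{L})$. This follows from the rapid decay of ${\tt K}_0$ and $\mathrm{dist}(\hat{\calx},\check{\calx}^c)>0$. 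The paper's choice $\psi=1_{\check{\calx}}$ avoids having to control the law of $X_t$ outside $\check{\calx}$.
\item For $m=1$ the kernel in $Q_{1,n}$ is $\varphi_{0,d_n/4}$ rather than $\varphi_{0,d_n/2}$, which is immaterial.
\end{enumerate}
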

\proof 
Since ${\cred\wt{\gamma}_n}>0$, it is sufficient to show the inequality (\ref{202509240317}) for sufficiently large $n$. 
%
{\cred 
Take an open set $\hat{\calx}$ in $\bbR^\sfd$ such that 
$\calx\subset\hat{\calx}\subset\ol{\hat{\calx}}\subset\check{\calx}$. 
}
{\colorro 
We apply the inequality (\ref{202606121049}) of Theorem \ref{202606111044} for 
{\cred $\ep>0$, $d_n=\wt{\gamma}_n^{\>\frac{1}{{\colorro 2}\beta}}$, 
$\psi=1_{\check{\calx}}$, 
$m=1$, 
{\cred 
$D_n(x)=(D_n^{(i)}(x))_{i\in[\sfd_D]}=\big(\wh{S}_n(x)-S^*(x)\big)\calw(x)^{1/2}$ 
with $\sfd_D=\sfm^2$, 
}%
$\xi'_{n,j}=\ol{X}_\tjm$ and  $\xi_{n,j}=X_\tjm$. 
}
{\colorb 
We can regard each ${\cred D_n^{(i)}}\in C_b^{\ell,\beta-\ell}(\bbR^\sfd)$ 
with the support in $\calx$, due to Lemma \ref{202604121758}. 
}
{\cred 
Accordingly, we define the set ${\tt B}$ as 
\beas 
{\tt B}
&=& 
\big\{\big((S-S^*)\calw^{1/2}\big)_{i,j};\>S\in\sfB,\>(i,j)\in[\sfm]^2\big\},
\eeas
where 
$\big((S-S^*)\calw^{1/2}\big)_{i,j}$ is the $(i,j)$-components of the matrix-valued function $(S-S^*)\calw^{1/2}$, and 
each component $\big((S-S^*)\calw^{1/2}\big)_{i,j}$ is regarded as 
the $zero$-extension of $\big((S-S^*)\calw^{1/2}\big)_{i,j}|_{\calx^o}$ from $\calx^o$ to $\bbR^\sfd$. 
Then ${\tt B}$ is a bounded set in $C_b^{\ell,\beta-\ell}(\bbR^\sfd)$. 
}
{\colorro
Thus, we obtain 
\bea\label{202607011407}
\bbR_n\big((\ol{X}_\tjm)_{j\in\bbI_n}\big) 
&\leq& 
\Csbi
\bigg[
\bbR_n\big((X_\tjm)_{j\in\bbI_n}\big) 
+d_n^{2\beta}
+d_n^{-\sfd}
\sup_{a\in{\cred\hat{\calx}}}Q_{1,n}(a)
\bigg]
\eea
for all $n\in\bbN$, 
where $\Csbi$ is a constant, 
\beas 
\bbR_n\big((\ol{X}_\tjm)_{j\in\bbI_n}\big) 
&=& 
E\bigg[n^{-1}\sum_{j\in\bbI_n}\big|\wh{S}_n(\ol{X}_\tjm)-S^*((\ol{X}_\tjm)\big|^2\calw(\ol{X}_\tjm)\bigg],
\eeas
\beas 
\bbR_n\big((X_\tjm)_{j\in\bbI_n}\big) 
&=& 
E\bigg[n^{-1}\sum_{j\in\bbI_n}\big|\wh{S}_n(X_\tjm)-S^*((X_\tjm)\big|^2\calw(X_\tjm)\bigg]. 
\eeas
and 
\bea\label{202607011411}
Q_{1,n}(a) 
&=& 
P\bigg[
\bigg|n^{-1}\sum_{j=1}^n\bigg\{\wt{\varphi}_{d_n}(X_\tjm-a){\colorb \psi}(X_\tjm)
-E\big[\wt{\varphi}_{d_n}(X_\tjm-a){\colorb \psi}(X_\tjm)\big]\bigg\}\bigg|
\nn\\&&\hspace{60pt}
{\cred\geq\>}\frac{\ep}{2(1+\ep)}n^{-1}\sum_{j=1}^nE\big[\wt{\varphi}_{d_n}(X_\tjm-a){\colorb \psi}(X_\tjm)\big]
\bigg]
\eea 
for $\wt{\varphi}_{d_n}=\varphi_{0,d_n/4}$, 
in this situation. 
}

{\colorro
Define $\wt{{\tt K}}$ by $\wt{{\tt K}}(x)={\tt K}_0(4x)$ for $x\in\bbR$. 
}
We have 
\bea\label{202510111949}&&
\sup_{a\in{\cred\hat{\calx}}}
\bigg|E\big[\wt{\varphi}_{d_n}(X_t-a){\colorb \psi}(X_t)\big]-p^{X_t}(a)\bigg|
\nn\\&\leq&
\sup_{a\in{\cred\hat{\calx}}}\bigg\{
\int_{\check{\calx}}\big|\wt{\varphi}_{d_n}(x-a)\big|\>\big|p^{X_t}(x)-p^{X_t}(a)\big|dx
+p^{X_t}(a)\int_{(\check{\calx})^c}\big|\wt{\varphi}_{d_n}(x-a)\big|dx\bigg\}
\nn\\&\leq&
\sup_{a\in{\cred\hat{\calx}},x\in\check{\calx}:|x-a|<d_n^{1/2}}\big|p^{X_t}(x)-p^{X_t}(a)\big|
\int_{\bbR^\sfd}\prod_{i=1}^\sfd|\wt{{\tt K}}(y_i)|dx
\nn\\&&
+\sup_{x\in\check{\calx}}p^{X_t}(x)
\int_{y=(y_i):|y|\geq d_n^{-1/2}}\prod_{i=1}^\sfd|\wt{{\tt K}}(y_i)|dy
\nn\\&&
+\sup_{a\in{\cred\hat{\calx}}}
p^{X_t}(a)\int_{y=(y_i):|y|\geq d_n^{-1}\text{dist}({\cred\hat{\calx}},(\check{\calx})^c)}
\prod_{i=1}^\sfd|\wt{{\tt K}}(y_i)|dy.
\eea
Therefore, by (\ref{202510111949}), there exists $n_0$ such that 
\bea\label{202510112054}&&
2^{-1}\inf_{a\in{\cred\hat{\calx}}}p^{X_t}(a)-\inf_{a\in{\cred\hat{\calx}}}E\big[\wt{\varphi}_{d_n}(X_t-a){\colorb \psi}(X_t)\big]
\nn\\&\leq&
2^{-1}\inf_{a\in{\cred\hat{\calx}}}p^{X_t}(a)
+\sup_{a\in{\cred\hat{\calx}}}\bigg|E\big[\wt{\varphi}_{d_n}(X_t-a){\colorb \psi}(X_t)\big]-p^{X_t}(a)\bigg|
-\inf_{a\in{\cred\hat{\calx}}}p^{X_t}(a)
\nn\\&{\cred=}&
-2^{-1}\inf_{a\in{\cred\hat{\calx}}}p^{X_t}(a)
+\sup_{a\in{\cred\hat{\calx}}}\bigg|E\big[\wt{\varphi}_{d_n}(X_t-a){\colorb \psi}(X_t)\big]-p^{X_t}(a)\bigg|
\nn\\&\leq&
0
\eea
for all $n\geq n_0$, where $n_0$ does not depend on $t\in\bbR_+$.

Inequality (\ref{202510112054}) gives 
\beas
\inf_{a\in{\cred\hat{\calx}}}E\big[\wt{\varphi}_{d_n}(X_t-a){\colorb \psi}(X_t)\big]
&\geq&
2^{-1}\inf_{a\in{\cred\hat{\calx}}}p^{X_t}(a)
{\cred \>\ygeq 2^{-1}\inf_{a\in{\cred\check{\calx}}}p^{X_t}(a)}
\eeas
for all $n\geq n_0$ and $t\in\bbR_+$.

{\colorro Fix $a\in{\cred\hat{\calx}}$. }%
Now, 
{\colorro from (\ref{202607011411}),}
we have 
\bea\label{202510112247}
{\colorro Q_{1,n}(a) }
&\leq&
P\bigg[
\bigg|n^{-1}\sum_{j=1}^n
V^n_j
\bigg|
>\Css
\bigg]
\eea
for
\beas
\Css &=&
\frac{\ep}{4(1+\ep)}\inf_{a'\in{\cred\check{\calx}}}p^{X_\tjm}(a')\>>\>0
\eeas
and
\beas 
V^n_j
&=&
\wt{\varphi}_{d_n}(X_\tjm-a){\colorb \psi}(X_\tjm)-E\big[\wt{\varphi}_{d_n}(X_\tjm-a){\colorb \psi}(X_\tjm)\big]. 
\eeas
{\colorro The random variable $V^n_j$ depends on $a{\cred\>\in\hat{\calx}}$. }
{\colorb 
Let $\bbI_{n,r}=\{j\in\bbI_n;\>j=r\>(\text{mod }{\cred\lfloor1/h\rfloor})\}$ for $r\in\{0,1,....,\lfloor h^{-1}\rfloor{\cred-1}\}$. 
By (\ref{202510112247}), we estimate ${\colorro Q_{1,n}(a) }$ as 
\bea\label{202607011605}
{\colorro Q_{1,n}(a) }
&\simleq&
h^{-1}\sup_{r}P\bigg[\bigg|T^{-1}\sum_{j\in\bbI_{n,r}}V^n_j\bigg|>\Csas\bigg]
\eea
for some positive constant $\Csas$. 

{\fred The sequence $\big(V^n_j;j\in\bbN,j=r\>(\text{mod}\lfloor 1/h\rfloor)\big)$ is geometrically $\alpha$-mixing 
uniformly in $r$. 
}
We will apply Theorem 2 of Merlev\`ede et al. \cite{merlevede2009bernstein}. 
Let ${\fred\varep}$ be an arbitrary positive {\fred number}. 
For 
\beas 
v^2 &=& \sup_{j\in\bbN}\bigg(\text{Var}[V^n_j]+2\sum_{k\in\bbN,k>j}\big|\text{Cov}[V^n_j,V^n_k]\big|\bigg), 
\eeas
we have 
\bea\label{202604131829}
v^2 
&\leq& 
V(\gamma,{\fred\varep})\sup_j\big(\|V^n_j\|_{2+2{\fred\varep}}\big)^2
\eea
with
\beas 
V(\gamma,{\fred\varep})
&\leq&
4\bigg[1+4\sum_{j\in\bbN}\gamma^{-\frac{1}{1+{\fred\varep}^{-1}}}\exp\bigg(-\frac{\gamma}{1+{\fred\varep}^{-1}}j\bigg)\bigg]
\nn\\&=&
4+\frac{16\gamma^{-\frac{1}{1+{\fred\varep}^{-1}}}}{1-\exp\big(-\frac{\gamma}{1+{\fred\varep}^{-1}}\big)}.
\eeas
{\cred Here $v^2$ depends on $(n,a)$, and 
$\gamma$ is the parameter in the bound of $\alpha^X(r)$ in $[A1]$ (i).}
Inequality (\ref{202604131829}) follows from the covariance inequality for mixing processes 
{\cred (cf. Rio \cite{rio2017asymptotic} p. 6)}. 

Moreover, we have 
\beas 
\big(\|V^n_j\|_{2+2{\fred\varep}}\big)^{2+2{\fred\varep}}
&\simleq&
d_n^{-\sfd(1+2{\fred\varep})}
\yeq 
{\cred\wt{\gamma}_n}^{-\frac{{\fred\sfd}}{{\colorro2}\beta}(1+2{\fred\varep})}
\eeas
since $d_n^{{\colorro2}\beta}={\cred\wt{\gamma}_n}$. 
If we take ${\fred\varep}=\frac{1}{\log n}$, then (\ref{202604131829}) yields 
\beas 
v^2 
\>\simleq\>
{\cred\wt{\gamma}_n}^{-\frac{{\fred\sfd}}{{\colorro2}\beta}\frac{1+2{\fred\varep}}{1+{\fred\varep}}}\>\log n
\>\simleq\>
{\cred\wt{\gamma}_n}^{-\frac{{\fred\sfd}}{{\colorro2}\beta}}\log n.
\eeas
{\cred In the last inequality, the following estimate was used: for large $n$, 
\beas 
\wt{\gamma}_n^{-\frac{{\fred\sfd}}{{\colorro2}\beta}\frac{{\fred\varep}}{1+{\fred\varep}}}
&\leq&
\big\{T(\log n)^{-3}\big\}^{\frac{{\fred\varep}}{1+{\fred\varep}}}
\quad(\text{by }(\ref{202604131857a}))
\nn\\&\leq&
\big\{n(\log n)^{-3}\big\}^{\frac{{\fred\varep}}{1+{\fred\varep}}}
\quad(h\to0)
\nn\\&\leq&
\big\{n(\log n)^{-3}\big\}^{{\fred\varep}}
\nn\\&\leq& 
2e.
\eeas

}

We consider the coefficient appearing in Inequality (8) of Theorem 2 of Merlev\`ede et al. \cite{merlevede2009bernstein}: 
\beas 
\frac{x^2}{v^2n+M^2+xM(\log n)^2}
\eeas
in their notation. 
In the present case, we set 
\beas 
n\leftarrow {\colorro T}, \quad
x\leftarrow T, \quad M\leftarrow \sup_j\|V^n_j\|_\infty\simleq d_n^{-\sfd}={\cred\wt{\gamma}_n}^{-\frac{\sfd}{{\colorro2}\beta}}
\eeas
to obtain 
\beas 
\frac{T^2}{v^2T+M^2+TM(\log T)^2}
&\geqsim&
\min\bigg\{
T{\cred\wt{\gamma}_n}^{\frac{{\fred\sfd}}{{\colorro2}\beta}}(\log n)^{-1},\>
\big(T{\cred\wt{\gamma}_n}^{\frac{\sfd}{{\colorro2}\beta}}\big)^2,\>
T{\cred\wt{\gamma}_n}^{\frac{\sfd}{{\colorro2}\beta}}(\log T)^{-2}
\bigg\}. 
\eeas
%
By {\cred(\ref{202604131857a})}, in particular, 
$
T{\cred\wt{\gamma}_n}^{\frac{\sfd}{{\colorro2}\beta}} \to \infty
$, 
and hence we have
\bea\label{202604131858}
\frac{T^2}{v^2T+M^2+TM(\log T)^2}
&\geqsim&
\min\bigg\{
T{\cred\wt{\gamma}_n}^{\frac{{\fred\sfd}}{{\colorro2}\beta}}(\log n)^{-1},\>
T{\cred\wt{\gamma}_n}^{\frac{\sfd}{{\colorro2}\beta}}(\log T)^{-2}
\bigg\}
\nn\\&\simgeq&
T{\cred\wt{\gamma}_n}^{\frac{{\fred\sfd}}{{\colorro2}\beta}}(\log n)^{-2}
\eea
{\cred since $n\geq T=nh$ for large $n$ due to $h\to0$.} %
Condition {\cred(\ref{202604131857a})} is requesting that the right-hand side of the (\ref{202604131858}) is growing faster than $\log n$, i.e., 
\beas
T{\cred\wt{\gamma}_n}^{\frac{{\fred\sfd}}{{\colorro2}\beta}}(\log n)^{-2} &\gg& \log n
\eeas
as $n\to\infty$. 

The probability ${\colorro Q_{1,n}(a) }$ of (\ref{202607011605}) is of order 
$
h^{-1}\exp\big(-\Csbk T{\cred\wt{\gamma}_n}^{\frac{{\fred\sfd}}{\beta}}(\log n)^{-2}\big)
$
for sufficiently large $n$, for some positive constant $\Csbk$, 
which follows from the large deviation inequality for $\alpha$-mixing processes 
of Theorem 2 of Merlev\`ede et al. \cite{merlevede2009bernstein}. 
{\cred 
Note that the estimate is uniform in $a\in\calx$. 
Therefore, Condition (\ref{202604131857a}) ensures that 
\bea\label{202608071441}
d_n^{-\sfd}\sup_{a\in{\cred\check{\calx}}}Q_{1,n}(a)=O(n^{-L})
\eea 
as $n\to\infty$, for any $L>0$, 
{\cred because 
$h^{-1}=n/T\ll n$ by $T\to\infty$, and 
\bea\label{202608071435}
d_n^{-\sfd}=\wt{\gamma}_n^{-\frac{\sfd}{2\beta}}\leq T(\log n)^{-3}\leq n  
\eea
for large $n$, by (\ref{202604131857a}).
(\ref{202608071435}) also gives $d_n^{2\beta}\geq n^{-2\beta/\sfd}$ for large $n$. 
}
}
{\cred
From (\ref{202607011407}) and (\ref{202608071441}), we obtain the estimate }
\bea\label{202607011624}
\bbR_n\big((\ol{X}_\tjm)_{j\in\bbI_n}\big) 
&\leq& 
\Csbj
\big[
\bbR_n\big((X_\tjm)_{j\in\bbI_n}\big) 
+d_n^{2\beta}
\big]
\quad(n\in\bbN)
\eea
for some constant $\Csbj$. 
Since 
$\bbR_n\big((\ol{X}_\tjm)_{j\in\bbI_n}\big)$ is compatible with $\ol{\bfR}_n^p$, and 
$\bbR_n\big((X_\tjm)_{j\in\bbI_n}\big)$ with $\bfR_n^p$, 
due to (\ref{202505050127}), we obtain Inequality (\ref{202509240317}) {\cred of (a)} from (\ref{202607011624}). 

{\cred 
To show (b), we set 
\beas 
\wt{\gamma}_n=T^{-2\beta/\sfd}(\log n)^{6\beta/\sfd}
\bigg[\call_n\wedge\big\{T(\log n)^{-3}\big\}^{\frac{\beta}{\sfd}}\bigg].
\eeas 
Then $\lim_{n\to\infty}\wt{\gamma}_n=0$ and 
the condition (\ref{202604131857a}) is satisfied. 
Therefore, (b) follows from (a). 

Set $\wt{\gamma}_n=\gamma_n$. Then (\ref{202604131857a}) is satisfied with (\ref{202604131857}), 
and hence (c) is proved by (a). 
}
\qed
\halflineskip
\halflineskip

\subsection{Estimation of $\ol{\bfR}_n$ by $\ol{\bfR}_n^p$}
We give a result similar to Lemma \ref{202602140214}. 
\begin{lemma}\label{202605140839}
There exists a constant $\Csat$ such that 
\bea\label{202602140207}
\ol{\bfR}_n
&\leq&
\Csat
{\cred
\big(
\ol{\bfR}_n^p
+n^{-1}+h_n^2\big).
}
\eea
\end{lemma}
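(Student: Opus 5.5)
We follow the route of Lemma \ref{202602140214}, now on the independent copy $\ol{Z}$ and with the inequality in the opposite direction. By definition,
\[
\ol{\bfR}_n-\ol{\bfR}_n^p=\frac{1}{2n}E\bigg[\sum_{j\in\bbI_n}\big(\wh{S}_n(\ol{X}_\tjm)^{-1}-S^*(\ol{X}_\tjm)^{-1}\big)\big[h^{-1}(\Delta_j\ol{Y})^{\otimes2}-S^*(\ol{X}_\tjm)\big]\onecalw(\ol{X}_\tjm)\bigg].
\]
Define $\ol{\varpi}_j$ from $\ol{X}$ exactly as $\varpi_j$ is defined from $X$. On $\cap_j\{\ol{\varpi}_j=\tj\}$, Lemma \ref{202602132307} (a), applied to $\ol{Z}$, decomposes $h^{-1}(\Delta_j\ol{Y})^{\otimes2}-S^*(\ol{X}_\tjm)$ into $\ol{M}^n_j(\tj\wedge\ol{\varpi}_j)+\dot{\ol{r}}^n_j$. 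The complement has probability $O(n^{-L})$ by (\ref{202607031437}). On that complement we bound the integrand using $[A1]$ (iv), the boundedness of $S^{-1}$ on $\bbS$ and Hölder's inequality, which gives a contribution $O(n^{-1})$.

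The key simplification compared with Lemma \ref{202602140214} concerns the martingale term. Since $\wh{S}_n$ is $\sigma[Z]$-measurable and $\ol{Z}$ is independent of $Z$, we can condition on $Z$, i.e. freeze $\wh{S}_n=S$ deterministic. Then each summand
\[
\big(S(\ol{X}_\tjm)^{-1}-S^*(\ol{X}_\tjm)^{-1}\big)\big[\ol{M}^n_j(\tj\wedge\ol{\varpi}_j)\big]\onecalw(\ol{X}_\tjm)
\]
has zero expectation. The reasons are that $\ol{M}^n_j(\cdot\wedge\ol{\varpi}_j)$ is a martingale started at $\tjm$ (this is why $\tj-t$ rather than $u-t$ appears in (\ref{202602130220})), the stopped version is a true martingale because the integrands are $L^p$-bounded on the stopping event by $[A1]$ (ii)--(iii), and the prefactor is $\calf_\tjm$-measurable and bounded.

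This leaves a technical point: the expectation is of the martingale term times $1_{\cap_j\{\ol{\varpi}_j=\tj\}}$, not of the stopped martingale itself. We handle it as in (\ref{202602140054}). We replace the indicator by $1$ at cost $\|\ol{M}^n_j(\tj\wedge\ol{\varpi}_j)\|_2\,P[\cup_j\{\ol{\varpi}_j<\tj\}]^{1/2}=O(n^{-1})$, using the $L^2$ bound from $[A1]$ (ii). No covering number or $\lambda_n$ enters, because there is no dependence between $\wh{S}_n$ and $\ol{Z}$. This is why the bound has only $n^{-1}$ and not $\gamma_n$.

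For the remainder, we argue as in (\ref{202605151523}). We use $|S^{-1}-S^{*-1}|\le C|S-S^*|$, then Cauchy--Schwarz, (\ref{202606301728}), and the compatibility (\ref{202505050127}) applied to $\ol{X}$. This gives
\[
\bigg|\frac{1}{2n}E\sum_j(\cdots)[\dot{\ol{r}}^n_j]\onecalw\bigg|\simleq(\ol{\bfR}_n^p)^{1/2}h\le \ol{\bfR}_n^p+h^2 .
\]
Combining the three pieces yields $\ol{\bfR}_n\le\ol{\bfR}_n^p+C\big((\ol{\bfR}_n^p)^{1/2}h+n^{-1}\big)$, and hence the claim by $2ab\le a^2+b^2$.

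I expect the main obstacle to be the careful justification of the localization. The issue is that $\partial S^*$ is only controlled on $\check{\calx}$, so we must check that the stopped process $\ol{M}^n_j$ is a genuine martingale with uniformly bounded second moments under the non-stationary $\ol{X}$. We must also check that exchanging the global indicator with the stopped martingale costs only $O(n^{-1})$. Both should follow from $[A1]$ (ii) together with (\ref{202606291521}), as used in (\ref{202607031437}).
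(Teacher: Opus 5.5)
Your proof is correct and follows the paper's route. The paper also localizes on $\cap_j\{\ol{\varpi}_j=\tj\}$ to write $\ol{\bfR}_n-\ol{\bfR}_n^p=E[\ol{\bbJ}_n]+E[\ol{r}_n]+O(n^{-1})$, gets $E[\ol{\bbJ}_n]=0$ from the independence of $\wh{S}_n$ and $\ol{Z}$ (so no covering or $\lambda_n$ term appears), and bounds $E[|\ol{r}_n|]\simleq(\ol{\bfR}_n^p)^{1/2}h\leq\ol{\bfR}_n^p+h^2$ via Cauchy--Schwarz, (\ref{202606301728}) and compatibility. You simply spell out in more detail why the stopped process is a true martingale and why removing the global indicator costs only $O(n^{-1})$.
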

\proof
In the same way as the proof of Lemma \ref{202602140214}, 
we consider the decomposition 
\bea\label{202602140054g}
\ol{\bfR}_n-\ol{\bfR}_n^p
&=& 
E[\ol{\bbJ}_n]+{\colorro E[\ol{r}_n]}
{\cred\>+O(n^{-1})},
\eea
which is the generalized version of (\ref{202602140054}). 
Here we used estimates similar to those provided by Lemma \ref{202602132307}, 
but $\ol{\bbJ}_n$ is defined as 
\bea\label{202602120111g}
\ol{\bbJ}_n
\yeq
\frac{1}{2n} \sum_{j\in\bbI_n}\big(\wh{S}_n(\ol{X}_\tjm)^{-1}-S^*(\ol{X}_\tjm)^{-1}\big)
 \big[\ol{M}^n_j(\tj{\colorro\wedge\ol{\varpi}_j})\big]
\onecalw(\ol{X}_\tjm), 
\eea
which is the generalized version of $\bbJ_n$ of (\ref{202602120111}), 
with 
\bea\label{202602130220g}
\ol{M}^n_j(u)
&=&
h^{-1}\bigg\{\bigg(\int_\tjm^u\sigma^*(\ol{X}_t)d\ol{w}_t\bigg)^{\otimes2}-\int_\tjm^u\sigma^*(\ol{X}_t)^{\circledast2}dt\bigg\}
\nn\\&&
+2b^*(\ol{X}_\tjm)\wt{\otimes}\int_\tjm^u\sigma^*(\ol{X}_t)d\ol{w}_t
+h^{-1}\int_\tjm^u(\tj-t)\partial S^*(\ol{X}_t){\colorro [{\cred\sigma}_s^{\ol{X}}d\ol{w}_s^{\ol{X}}]},
\eea
{\cred 
where $\ol{w}$ and $\ol{w}^{\ol{X}}$ are Wiener processes driving $(\ol{X},\ol{Y})$, and $\sigma^{\ol{X}}$ and $\ol{\varpi}_j$ 
are corresponding to $\sigma^X$ and ${\varpi}_j$, respectively. 
}
{\colorro 
The random variable $\ol{r}_n$ is also defined in a similar fashion as 
\beas
\ol{r}_n
&=&
\frac{1}{2n} \sum_{j\in\bbI_n}\big({\cred\wh{S}_n}(\ol{X}_\tjm)^{-1}-S^*(\ol{X}_\tjm)^{-1}\big)
 \big[\ol{\dot{r}}^n_j1_{\{\ol{\varpi}_j=\tj\}}\big]
\onecalw(\ol{X}_\tjm).
\eeas
with the counterpart $\ol{\dot{r}}^n_j$ of $\dot{r}^n_j$. 
}
Note that the estimates provided by Lemma \ref{202602132307} are unchanged for $(\ol{w},{\colorro\ol{w}^X,}\ol{X},\ol{Y})$ 
since this {\cred quartet} is an independent copy of $(w,{\colorro w^X,}X,Y)$.

{\cred 
We have $E[\ol{\bbJ}_n]=0$ by the independency of the random variables. 
On the other hand, 
by the same estimate of $\ol{\dot{r}}^n_j1_{\{\ol{\varpi}_j=\tj\}}$ as (\ref{202606301728}) for $\dot{r}^n_j1_{\{\ol{\varpi}_j=\tj\}}$, we obtain 
\beas 
E\big[|\ol{r}_n|\big]
&\simleq&
\big(E\big[|\wh{S}_n-S^*|_n^2\big]\big)^{1/2}\bigg(E\bigg[n^{-1}\sum_{j\in\bbI_n}\big|\ol{\dot{r}}^n_j1_{\{\ol{\varpi}_j=\tj\}}\big|^2\bigg]\bigg)^{1/2}
\nn\\&\simleq&
\big(\ol{\bfR}_n^p\big)^{1/2}h
\nn\\&\leq&
\ol{\bfR}_n^p+h^2. 
\eeas
Consequently, it follows from (\ref{202602140054g}) that 
\beas
\ol{\bfR}_n-\ol{\bfR}_n^p
&\simleq& 
\ol{\bfR}_n^p+h^2+O(n^{-1})
\eeas
This proves the inequality (\ref{202602140207}). 
}
%
\qed
\halflineskip

\subsection{Estimation of $E\big[\Phi_n(S,Z)-\Phi_n(S^*,Z)\big]$}

{\cred 

Recall that $\ol{U}_n(S)$ is defined in (\ref{202608081238}). 

\begin{lemma}\label{202607131330}
{\fred There} exists a constant $\Csbb$ such that 
\bea\label{202607131322}
E\big[\Phi_n(S,Z)-\Phi_n(S^*,Z)\big]
&\leq&
\Csbb\big(
E\big[\ol{U}_n(S)\big]+h^2+n^{-1}\big)
\eea
for all $S\in\bbS$ and ${\fred n}\in\bbN$. 
\end{lemma}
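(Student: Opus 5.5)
We reduce the optional risk of a deterministic $S\in\bbS$ to its predictable counterpart. Expanding the definition of $\Phi_n$,
\begin{equation*}
E\big[\Phi_n(S,Z)-\Phi_n(S^*,Z)\big]
=E\big[\ol{U}_n(S)\big]
+\frac{1}{2n}\sum_{j\in\bbI_n}E\Big[\big(S(X_\tjm)^{-1}-S^*(X_\tjm)^{-1}\big)\big[h^{-1}(\Delta_jY)^{\otimes2}-S^*(X_\tjm)\big]\onecalw(X_\tjm)\Big],
\end{equation*}
since the $\log\det$ terms and the $S^*$ part are exactly those of $U(X_\tjm,S)$. Write $A_j=\big(S(X_\tjm)^{-1}-S^*(X_\tjm)^{-1}\big)\onecalw(X_\tjm)$. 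Then $A_j$ is $\calf_\tjm$-measurable, bounded uniformly in $S\in\bbS$ by (\ref{202505050251}), and satisfies $|A_j|\simleq|S(X_\tjm)-S^*(X_\tjm)|\onecalw(X_\tjm)$. It remains to bound the expectation of the $j$-th cross term.

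For each $j$, split according to the event $\{\varpi_j=\tj\}$. On its complement, use H\"older's inequality with $[A1]$ (iv), which gives boundedness in $L^{p_0/2}$ of $h^{-1}|\Delta_jY|^2$ with $p_0>2$, together with the localization estimate (\ref{202607031437}), which makes $P[\varpi_j<\tj]=O(n^{-L})$. This contributes $O(n^{-1})$ after averaging. On $\{\varpi_j=\tj\}$, Lemma \ref{202602132307} (a) gives $h^{-1}(\Delta_jY)^{\otimes2}-S^*(X_\tjm)=M^n_j(\tj)+\dot r^n_j$.

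\emph{Martingale part.} Here $1_{\{\varpi_j=\tj\}}M^n_j(\tj)$ differs from $M^n_j(\tj\wedge\varpi_j)$ only on $\{\varpi_j<\tj\}$, which again contributes $O(n^{-L})$ by H\"older and $[A1]$ (ii). The stopped process $u\mapsto M^n_j(u\wedge\varpi_j)$ is a martingale on $[\tjm,\tj]$ starting at $0$: the first bracket is $\big(\int\sigma^*dw\big)^{\otimes2}$ minus its compensator, and the other two are stochastic integrals with integrands bounded before $\varpi_j$ and moments controlled by $[A1]$ (ii). Hence $E\big[A_j[M^n_j(\tj\wedge\varpi_j)]\big]=0$ by conditioning on $\calf_\tjm$.

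\emph{Remainder.} By Cauchy--Schwarz and (\ref{202606301728}), then Young's inequality,
\begin{equation*}
\Big|\frac{1}{2n}\sum_j E\big[A_j[\dot r^n_j1_{\{\varpi_j=\tj\}}]\big]\Big|
\simleq h\Big(E\Big[n^{-1}\sum_j|S(X_\tjm)-S^*(X_\tjm)|^2\onecalw(X_\tjm)\Big]\Big)^{1/2}
\simleq E\big[\ol{U}_n(S)\big]+h^2.
\end{equation*}
The last step uses the compatibility (\ref{202505050127}), namely $U(x,S)\asymp|S(x)-S^*(x)|^2\onecalw(x)$. Summing the three contributions yields (\ref{202607131322}).

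The main obstacle is the careful localization. The Taylor expansions behind $\dot r^n_j$ are valid only while $X$ stays in $\check{\calx}_0$, and the constants must be uniform in $S\in\bbS$ and $j$. This requires checking that the stopped $M^n_j$ is a true, not merely local, martingale, using the moment bounds of $[A1]$ (ii) for $\sigma^X$ on $\check{\calx}$ together with the continuity of $\sigma^*$ and $\partial S^*$ on $\ol{\check{\calx}_0}$.
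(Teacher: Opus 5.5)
Your proposal is correct and follows the paper's proof essentially step by step. You use the same expansion into $E[\ol{U}_n(S)]$ plus a cross term, an $O(n^{-L})$ bound off the localization event via $[A1]$ (iv) and (\ref{202607031437}), zero mean for the stopped martingale part, and Cauchy--Schwarz/Young with (\ref{202606301728}) and compatibility for the $\dot r^n_j$ term. The only difference is that you localize with $\varpi_j$ rather than the paper's $\tau_j$; the paper's $\tau_j$ also caps the integrands, which makes the true-martingale property immediate, so with $\varpi_j$ you must invoke $[A1]$ (ii) for square-integrability, as you do.
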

\proof
Let $S\in\bbS$. Then 
\bea\label{202607131319a} &&
E\big[\Phi_n(S,Z)-\Phi_n(S^*,Z)\big]-E\big[\ol{U}_n(S)\big]
\nn\\&=&
E\bigg[\frac{1}{2n}\sum_{j\in\bbI_n}
\big(S(X_\tjm)^{-1}-S^*(X_\tjm)^{-1}\big)
[h^{-1}(\Delta_jY)^{\otimes2}-S^*(X_\tjm)]\calw(X_\tjm)\bigg]
\nn\\&=&
\bbK_{1,n}(S)+\bbK_{2,n}(S),
\eea
where 
\beas 
\bbK_{1,n}(S)
&=& 
E\bigg[\frac{1}{2n}\sum_{j\in\bbI_n}
\big(S(X_\tjm)^{-1}-S^*(X_\tjm)^{-1}\big)
[M^n_j(\tj\wedge\tau_j)+\dot{r}^n_j]\calw(X_\tjm)1_{\{\tau_j=\tj\}}\bigg]
\eeas
and 
\beas 
\bbK_{2,n}(S)
&=& 
E\bigg[\frac{1}{2n}\sum_{j\in\bbI_n}
\big(S(X_\tjm)^{-1}-S^*(X_\tjm)^{-1}\big)
[h^{-1}(\Delta_jY)^{\otimes2}-S^*(X_\tjm)]\calw(X_\tjm)1_{\{\tau_j<\tj\}}\bigg]. 
\eeas
We used Lemma \ref{202602132307} (a) for $\bbK_{1,n}$. 
By (\ref{202607031437}) and $[A1]$ (iv), we have  
\bea\label{202607131319b}
\sup_{S\in\bbS}\big|\bbK_{2,n}(S)\big| &=& O(n^{-L})
\eea
as $n\to\infty$, for any $L>0$. 
Similarly, we have 
\beas 
\bbK_{1,n}(S)
&=& 
E\bigg[\frac{1}{2n}\sum_{j\in\bbI_n}
\big(S(X_\tjm)^{-1}-S^*(X_\tjm)^{-1}\big)
[M^n_j(\tj\wedge\tau_j)+\dot{r}^n_j1_{\{\tau_j=\tj\}}]\calw(X_\tjm)\bigg]
+\ol{O}(n^{-L})
\nn\\&=& 
E\bigg[\frac{1}{2n}\sum_{j\in\bbI_n}
\big(S(X_\tjm)^{-1}-S^*(X_\tjm)^{-1}\big)
[\dot{r}^n_j1_{\{\tau_j=\tj\}}]\calw(X_\tjm)\bigg]
+\ol{O}(n^{-L}), 
\eeas
where $\ol{O}$ indicates the order is uniform in $S\in\bbS$. 
Thus, by (\ref{202606301728}), there exists onstants $\Csba$ and $\Csbl$ such that 
\bea\label{202607131319c}
\big|\bbK_{1,n}(S)\big|
&\leq&
2\Csba \big(E\big[|S-S^*|_n^2\big]\big)^{1/2}h+\ol{O}(n^{-L})
\nn\\&\leq&
\Csba \big(E\big[|S-S^*|_n^2\big]+h^2\big)+\ol{O}(n^{-L})
\nn\\&\leq&
\Csbl \big(E\big[\ol{U}_n(S)\big]+h^2\big)+\ol{O}(n^{-L})
\eea
for all $S\in\bbS$ and $n\in\bbN$. 
From (\ref{202607131319a}), (\ref{202607131319b}), (\ref{202607131319c}) and the compatibility, 
we obtain the inequality (\ref{202607131322}). 
\qed
\halflineskip
}

\subsection{Proof of {\cred Theorems \ref{202602140323} 
and Propositions \ref{202607131357}}}\label{202606261834}

{\it Proof of Theorem \ref{202602140323}.}
{\cred Combine Lemmas \ref{202602140253} and \ref{202607131330} as well as Lemma \ref{202509220400} (c) 
to obtain the inequality (\ref{202602140245a}) of Theorem \ref{202602140323} for $\ol{\bfR}_n^p$. 
Then, the inequality (\ref{202602140245a}) for $\ol{\bfR}_n$
is obtained from the inequality for $\ol{\bfR}_n^p$
with the aid of Lemma  \ref{202605140839}. 
}
\qed\halflineskip

{\cred 
\noindent
{\it Proof of Proposition \ref{202607131357}.}
Combine Lemmas \ref{202602140253} and \ref{202607131330} as well as Lemma Lemma \ref{202509220400} (b) 
to show (\ref{202602140245a_flat}) of Proposition \ref{202607131357} for $\ol{\bfR}_n^p$. 
Then we obtain (\ref{202602140245a_flat}) for $\ol{\bfR}_n$
from that for $\ol{\bfR}_n^p$
with the aid of Lemma  \ref{202605140839}.  
\qed\halflineskip

}

\section{Application to Deep Learning}\label{202607151446}
\subsection{An upper bound of the generalization error}\label{202607151446a}

{\cred 
In this section, we use deep learning to obtain the initial estimator $\wh{S}^\DL_n$. 
For ${\bf v}=(v_1,...,v_d)^\star$, 
we define the shifted ReLU activation function $\sigma_v:\bbR^d\to\bbR^d$ as 
\beas 
\sigma_v(x) &=& \big((x_1-v_1)_+,...,(x_d-v_d)_+\big)^\star
\qquad(x=(x_1,...,x_d)^\star\in\bbR^d)
\eeas
where $u_+=\max\{u,0\}$ for $u\in\bbR$. 
Define the map $f\big(\cdot;(W_L,...,W_1,W_0),({\bf v}_L,...,{\bf v}_1)\big):\bbR^{{\fred\sfp(0)}}\to\bbR^{{\fred\sfp(L+1)}}$ by 
\bea\label{202412251346}
f\big(x;(W_L,...,W_1,W_0),({\bf v}_L,...,{\bf v}_1)\big)
&=&
W_L\sigma_{{\bf v}_L}W_{L-1}\sigma_{{\bf v}_{L-1}}\cdots W_1\sigma_{{\bf v}_1}W_0x
\quad(x\in\bbR^{\sfp_0}). 
\eea
for weight matrices $W_i\in\bbR^{{\fred\sfp(i+1)}}\otimes\bbR^{{\fred\sfp(i)}}$ ($i=0,1,...,L$) and 
shift vectors ${\bf v}_i\in\bbR^{{\fred\sfp(i)}}$ ($i=1,...,L$). 
%

Let $\calx=[0,1]^\sfd$. 
We denote by $\cald$ the set of functions $f$ that of the form (\ref{202412251346}) 
for some $L$, $(W_L,...,W_1,W_0)$ and $({\bf v}_L,...,{\bf v}_1)$. 
We consider the leaning machine ${\mathfrak F}_n$ specified by 
\beas 
{\mathfrak F}_n
&=& 
\bigg\{f\in\cald\text{ of the form }(\ref{202412251346});\>
\max_{\ell=0,...,L_n, j=1,...,L_n}\big(\|W_\ell\|_\infty\vee\|{\bf v}_j\|_\infty\big)\leq1,
\nn\\&&
\sum_{\ell=0}^{L_n}\|W_\ell\|_0+\sum_{j=1}^{L_n}\|{\bf v}_j\|_0\leq s_n,\>\|f\|_\infty\leq \check{K}
\bigg\}
\eeas
for some $\check{K}>0$ and 
\bea\label{202607141144}
s_n,\  L_n, 
{\fred \sfp_n(0)=\sfd}, 
\sfp_{n}(1),....\sfp_n(L_n) \in \bbN, 
{\fred \sfp_n(L_n+1)=\sfd^2}.
\eea
{\fred Here $\sfd$ is the dimension of the covariate process $X$, and 
the efficient dimension is $\sfd(\sfd+1)/2$ for $\sfp_n(L_n+1)$ by symmetry. 
We write $\sfp_n(i)$ for $\sfp(i)$ to specify the dependency of $\sfp(i)$ on $n$.} 

The $\infty$-norm $\|\cdot\|_\infty$ denotes the maximum-entry norm of the object and %
the $0$-norm $\|\cdot\|_0$ denotes the number of non-zero entries of the object. 
The class ${\mathfrak F}_n$ is used by Schmidt-Hieber \cite{schmidt2020nonparametric} 
as a deep neural network with ReLU activation function 
by appropriately controlling the numbers in (\ref{202607141144}).  

For a rectangle $D$ in an Euclidean space and a number $K$, 
let 
\beas
B^\beta(D,K)
&=& 
\big\{f\in C(D);\>
\|f\|_{C_b^{\ell,\beta-\ell}}(\text{Int}(D))\leq K\big\}. 
\eeas
As Schmidt-Hieber \cite{schmidt2020nonparametric}, we assume that each component of the true function $S^*$ admits 
a representation as 
\beas 
S^* &=& {\tt S}_q\circ\cdots\circ{\tt S}_0
\eeas
with some ${\tt S}_i=({\tt S}_{i,j})_{j\in[d_{i+1}]}:[a_i,b_i]^{d_i}\to[a_{i+1},b_{i+1}]^{d_{i+1}}$ 
satisfying ${\tt S}_{i,j}\in B^{\beta_i}([a_i,b_i]^{\sfq_i},K)$ 
for all $j\in[d_{i+1}]$, with some $\sfq_i\in\bbN$, $a_i$ and $b_i$ such that  $|a_i|, |b_i|\leq K$. 
Here $d_0=\sfd$ and $d_q=\sfd^2$. 

{\cblue We assume that all $\beta_i\geq2$. 
Then the effective smoothness index in Schmidt-Hieber \cite{schmidt2020nonparametric} 
is 
$\beta^*_i\equiv\beta_i\prod_{j=i+1}^q(\beta_j\wedge1)=\beta_i$, and the key convergence rate becomes 
\beas 
\phi_n &=& \max_{i=0,...,q}n^{-\frac{2\beta_i}{2\beta_i+\sfq_i}}.
\eeas
Let $\beta=\min_{i=0,...,q}\beta_i$. 
}

To reduce the generalization error, we need to control the size of ${\mathfrak F}_n$ 
so as it becomes rich but not too large. 
As Schmidt-Hieber \cite{schmidt2020nonparametric}, we impose the following conditions:
\bea\label{202607141413}&&
\check{K}\geq\max\{K,1\}, \quad
\sum_{i=0}^q\log_2 4(\sfq_i\vee\beta_i)\log_2n\leq L_n\simleq n\phi_n, \quad
\nn\\&&
n\phi_n\simleq\min\{\sfp_n(1),...,\sfp_n(L_n)\}, \quad
s_n\asymp n\phi_n\log n. 
\eea
as $n\to\infty$. 

By Inequality (26) of Schmidt-Hieber \cite{schmidt2020nonparametric} and the compatibility, we obtain 
\bea\label{202607141319}
\inf_{S\in\mathfrak{F}_n}{\cred E\big[\ol{U}_n(S) \big]}
&\simleq&
\phi_n. 
\eea
On the other hand, we have 
an estimate of the covering number as 
\bea\label{202412251659}
\log\caln_n
&\leq&
{\cblue \sfd^2}
(s_n+1)\log\bigg[2\delta_n^{-1}(L_n+1)\prod_{\ell=0}^{L_n+1}(\sfp_n(\ell)+1)\bigg]
\eea
due to Lemma 5 of Schmidt-Hieber \cite{schmidt2020nonparametric}. 

Similarly to Theorem 1 of Schmidt-Hieber \cite{schmidt2020nonparametric},  we obtain 
a bound of the generalization error. 

\begin{theorem}\label{202607141406}
Suppose that Conditions $[A1]$-$[A3]$ and 
the conditions in (\ref{202607141413}) are satisfied. 
Moreover, suppose that there exists a constant $\Csbe$ such that 
\bea\label{202607141509}
\Delta_n+\>\wh{\Delta}_n\leq \Csbe\big(\phi_nL_n(\log n)^2+h^2\big)
\eea
for $n\in\bbN$. 
Then, there exists a constant $\Csbf$ such that 
\bea\label{202607141459}
\ol{\bfR}_n \vee \ol{\bfR}_n^p 
&\leq&
\Csbf \big(\phi_nL_n(\log n)^2+h^2\big)
\eea
for $n\in\bbN$. 
\end{theorem}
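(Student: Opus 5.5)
The plan is to specialize Theorem \ref{202602140323} to the ReLU class ${\mathfrak F}_n$ and check that every term on the right-hand side of (\ref{202602140245a}) is $\simleq \phi_nL_n(\log n)^2+h^2$. The terms $\Delta_n+\wh{\Delta}_n$ are covered directly by the hypothesis (\ref{202607141509}). The approximation term $\inf_{S\in\mathfrak F_n}E[\ol{U}_n(S)]$ is $\simleq\phi_n$ by (\ref{202607141319}), and $\phi_n\le\phi_nL_n(\log n)^2$ because $L_n\gtrsim\log n$ by (\ref{202607141413}). The term $h_n^2$ passes through unchanged. I will choose $\delta_n=n^{-1}$. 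Then $\delta_n\log n=n^{-1}\log n$, which is negligible because $\phi_n\ge n^{-2\beta_i/(2\beta_i+\sfq_i)}\gg n^{-1}\log n$. Before invoking the theorem I will also check that $[A3]$ is compatible with this choice of $\delta_n$ and with the covering bound below. The condition (\ref{202602140148}) holds since $\log\caln_n\gtrsim s_n\gg\log\log n$, and (\ref{202602140215}) follows from the entropy bound obtained in the next step.

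Everything then rests on the entropy term $n^{-1}(\log n)\log\caln_n$. Since $s_n\asymp n\phi_n\log n$, the required bound $n^{-1}(\log n)\log\caln_n\simleq\phi_nL_n(\log n)^2$ is equivalent to
\[
\log\caln_n\;\simleq\; s_n L_n ,
\]
that is, the logarithmic factor in (\ref{202412251659}) must be $O(L_n)$ rather than $O(L_n\log n)$. I will bound the three pieces of $\log\big[2\delta_n^{-1}(L_n+1)\prod_{\ell}(\sfp_n(\ell)+1)\big]$ separately:
\begin{itemize}
\item $\log(2\delta_n^{-1})\simleq\log n\simleq L_n$, again using the lower bound on $L_n$ in (\ref{202607141413});
\item $\log(L_n+1)\le L_n$;
\item the width product, handled below.
\end{itemize}

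The width product is the main obstacle. A crude bound gives $\sum_\ell\log(\sfp_n(\ell)+1)\approx L_n\log(\max_\ell\sfp_n(\ell))$, and since the widths are at least of order $n\phi_n$ this is of order $L_n\log n$, one logarithm too many. The factor $\prod_\ell(\sfp_n(\ell)+1)$ in Lemma 5 of Schmidt-Hieber enters only as a Lipschitz constant of the parameter-to-function map in sup-norm. I will therefore redo that Lipschitz estimate for networks with at most $s_n$ nonzero parameters, all of modulus $\le1$. A parameter perturbation of size $\eta$ in layer $\ell$ is propagated by the remaining layers. With unit-bounded entries and the output clipped by $\|f\|_\infty\le\check K$, I aim to show that this propagation costs a factor $e^{cL_n}$ times a quantity polynomial in the active (nonzero) parameter count, rather than $\prod_\ell(\sfp_n(\ell)+1)$. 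The second step is to choose the positions of the nonzero entries: there are at most $\binom{L_n\max_\ell\sfp_n(\ell)^2}{s_n}$ choices. I will only count active neurons, of which there are at most $s_n$, so the combinatorial factor contributes $s_n\log s_n\simleq s_n\log n\simleq s_nL_n$. Together these give $\log\caln_n\simleq s_n(L_n+\log n)\simleq s_nL_n$, as required.

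If the refined Lipschitz estimate gives $e^{cL_n}$, the proof closes. In that case Theorem \ref{202602140323} yields $\ol{\bfR}_n\vee\ol{\bfR}_n^p\simleq\phi_nL_n(\log n)^2+h^2$, and since $[A3]$, in particular (\ref{202604131857}) with $\gamma_n\simleq\phi_nL_n(\log n)^2$, is assumed in the statement, nothing further needs checking. If instead the estimate only gives $\prod_\ell(\sfp_n(\ell)+1)$, the argument loses one factor of $\log n$. Closing that gap is then the point where the real work lies.
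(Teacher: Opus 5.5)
Your reduction is the paper's own. Its proof inserts $\delta_n=1/n$, the pruning reduction $\sfp_n(\ell)\lesssim n$, (\ref{202607141319}) and (\ref{202412251659}) into Theorem \ref{202602140323}, and your handling of $\Delta_n+\wh{\Delta}_n$, the approximation term, $\delta_n\log n$ and $h^2$ matches it.

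Your bookkeeping of the entropy term is also correct, and the paper's proof simply asserts the conclusion without addressing it. With these inputs, (\ref{202412251659}) gives $\log\caln_n\lesssim s_nL_n\log n\asymp n\phi_nL_n(\log n)^2$. Hence $n^{-1}(\log n)\log\caln_n\lesssim\phi_nL_n(\log n)^3$, which is one logarithm more than (\ref{202607141459}).

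Everything in your proposal therefore hinges on the refined Lipschitz estimate, and that step fails. The constraint $\|W_\ell\|_\infty\le1$ bounds entries only. So the $\ell^\infty\to\ell^\infty$ norm of a layer is its largest row $\ell^1$-norm, which can be as large as the row sparsity. The constraint $\|f\|_\infty\le\check K$ gives no control of hidden layers, because of cancellation. Concretely, take a small network realizing a constant admissible matrix. In parallel with it, place two identical branches, each a chain of $L_n$ layers of $k\times k$ all-ones blocks with zero shifts fed by $x_1$. Let their last neurons enter one diagonal output entry with weights $+1$ and $-1$. The realized function is the constant matrix. The branches cost about $2L_nk^2$ nonzero parameters, so $k\asymp(s_n/L_n)^{1/2}$ is allowed; this is a positive power of $n$ when $L_n\asymp\log n$. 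Lowering one early weight of one branch by $\eta$ moves the output by $\eta\,k^{L_n-O(1)}x_1$, and $k^{L_n}=\exp(cL_n\log n)$. Hence no propagation factor $e^{cL_n}\,\mathrm{poly}(s_n)$ exists. A parameter grid needs mesh about $\delta_n\exp(-cL_n\log n)$, and you recover exactly $s_nL_n\log n$.

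Note also that the logarithm you are trying to remove is already present in Schmidt-Hieber's regression rate $\phi_nL\log^2 n$ \cite{schmidt2020nonparametric}, where the entropy enters only as $n^{-1}\log\caln_n$. Removing it would sharpen his Theorem 1, so a routine refinement of his Lemma 5 cannot be expected to deliver it.

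The mismatch with (\ref{202607141459}) comes instead from the extra $\log n$ in the term $n^{-1}(\log n)\log\caln_n$ of Theorem \ref{202602140323}. That factor originates in the truncations built into $\tau_j$, e.g.\ of $h^{-1/2}\int\sigma^*(X_s)dw_s$ at level $(\log n)^{1/2}$. These feed the almost-sure quadratic-variation bound (\ref{202604101642}) used in Lemma \ref{202602130518}.

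As it stands, your argument, like the paper's substitution, yields (\ref{202607141459}) only with $\phi_nL_n(\log n)^3$ in place of $\phi_nL_n(\log n)^2$, and correspondingly $\phi_n(\log n)^4$ in (\ref{202607141504}). The more promising place to recover the missing logarithm is Lemma \ref{202602130518} rather than the entropy of the network class. For instance, a Bernstein-type martingale inequality based on conditional variances could replace the truncated almost-sure bound.
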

\proof 
Under the sparsity with $s_n$, 
any initial estimator $\wh{S}_n^0$ can find a counterpart estimator in the architecture satisfying 
$\sfp_n(1),....,\sfp_n(L_n)\leq s_n$. 
So we may assume that $\sfp_n(1),....,\sfp_n(L_n)\simleq n$. 
In view of (\ref{202607141319}) and (\ref{202412251659}) for $\delta_n=1/n$, 
we obtain (\ref{202607141459}) from 
{\fred Theorem \ref{202602140323}}.
\qed
\halflineskip

According to the second condition of (\ref{202607141413}), 
we can choose an architecture satisfying 
\beas 
\sum_{i=0}^q\log_2 4(\sfq_i\vee\beta_i)\log_2n\approx L_n.
\eeas
Then, for a constant $\Csbg$, we obtain 
\bea\label{202607141504}
\ol{\bfR}_n \vee \ol{\bfR}_n^p 
&\leq&
\Csbg \big(\phi_n(\log n)^3+h^2\big)
\eea
for $n\in\bbN$, 
under Conditions $[A1]$-$[A3]$ and (\ref{202607141509}). 
}

\subsection{Lower bound
}\label{202607151446b}
A lower bound will be {\fred presented} for the model (\ref{202509191103}) with a {\cred stationary} covariate process $X$ independent of $w$, 
that is, $Y$ is a doubly stochastic process. 
In this situation, we may assume that $X$ is $\calf_0$-measurable. 
Given $\calf_0$, the random variables $\{\Delta_jY\}_{j\in\bbI_n}$ are independent and 
\beas
\Delta_jY&\sim& N_\sfm\bigg(\int_\tjm^\tj b(X_t)dt,\>\int_\tjm^\tj S(X_t)dt\bigg).
\eeas
Therefore, the $\calf_0$-conditional probability density of $(\Delta_jY)_{j\in\bbI_n}$ is 
\beas 
p_n^S(z_1,...,z_n)
&=& 
\prod_{j\in\bbI_n}\phi\bigg(z_j;\int_\tjm^\tj b(X_t)dt,\>\int_\tjm^\tj S(X_t)dt\bigg). 
\eeas
$E^{P^S}$ denotes {\fred the expectation with respect to }%
the probability distribution $P^X(dx)P^{(\Delta_jY)_{j\in\bbI_n}}(\cdot|x)$ for $Y$ corresponding to $S$. 
The Kullback-Leibler divergence is 
\beas 
\text{KL}(P^S,P^{S_0})
&=&
E^{P^S}\bigg[\log\frac{p_n^S({\fred\Delta_1Y,...,\Delta_nY})}{p_n^{S_0}({\fred\Delta_1Y,...,\Delta_nY})}\bigg]
\nn\\&=&
-\half\sum_{j\in\bbI_n}E^{P^S}\bigg[\hspace{-5pt}\bigg[\log\frac{\det\int_\tjm^\tj S(X_t)dt}{\det\int_\tjm^\tj S_0(X_t)dt}
\nn\\&&
+\bigg(\int_\tjm^\tj S(X_t)dt\bigg)^{-1}\bigg[\bigg({\fred\Delta_jY}-\int_\tjm^\tj b(X_t)dt\bigg)^{\otimes2}\bigg]
\nn\\&&
-\bigg(\int_\tjm^\tj S_0(X_t)dt\bigg)^{-1}\bigg[\bigg({\fred\Delta_jY}-\int_\tjm^\tj b(X_t)dt\bigg)^{\otimes2}\bigg]\bigg]\hspace{-5pt}\bigg]
\nn\\&=&
-\half\sum_{j\in\bbI_n}E\bigg[\hspace{-5pt}\bigg[\log\frac{\det\int_\tjm^\tj S(X_t)dt}{\det\int_\tjm^\tj S_0(X_t)dt}
\nn\\&&
+\bigg\{\bigg(\int_\tjm^\tj S(X_t)dt\bigg)^{-1}-\bigg(\int_\tjm^\tj S_0(X_t)dt\bigg)^{-1}\bigg\}
\bigg[\int_\tjm^\tj S(X_t)dt\bigg]\bigg]\hspace{-5pt}\bigg].
\eeas
We assume that the support $\text{supp}(X_0)$ is 
in $\calw=[0,1]^\sfd$ and that 
the law of $X_0$ has density that is  bounded from above and below. 
Then, there exists a positive constant $\Csi$ such that 
\beas 
\text{KL}(P^S,P^{S_0})
&=&
\half nE\bigg[\hspace{-5pt}\bigg[
\bigg\{\bigg(\int_0^h S_0(X_t)dt\bigg)^{-1}-\bigg(\int_0^h S(X_t)dt\bigg)^{-1}\bigg\}
\bigg[\int_0^h S(X_t)dt\bigg]
\nn\\&&\hspace{50pt}
-\log\frac{\det\int_0^h S(X_t)dt}{\det\int_0^h S_0(X_t)dt}
\bigg]\hspace{-5pt}\bigg]
\nn\\&\leq&
\Csi nE\bigg[
\bigg|h^{-1}\int_0^h S_0(X_t)dt-h^{-1}\int_0^h S(X_t)dt\bigg|^2\bigg]
\nn\\&\leq&
\Csi nE\big[|S_0(X_0)-S(X_0)|^2\big]
\nn\\&=&
\Csi nE\big[|S_0(X_0)-S(X_0)|^2\onecalw(X_0)\big]
\eeas
by Jensen's inequality. 
Now the situation is the same as the proof of Theorem 3 of {\fred Schmidt-Hieber \cite{schmidt2020nonparametric}}. 
Thus, we obtain the mini-max bound. Namely, in his terminology, 
\beas 
\inf_{\wt{S}_n}\sup_{S\in\calg(q,{\bf d},{\bf t},{\bm \beta},K)}
{\fred\>\ol{\bfR}_n^p({\fred\wt{S}}_n,S)}
&\geq&
\csj\phi_n,     
\eeas
where $\wt{S}_n$ denotes any estimator of $S$ and 
$\csj$ is some positive constant. 
{\fred 
Regarding the notation, $\calg(q,{\bf d},{\bf t},{\bm \beta},K)$ is specifically the class adopted by Schmidt-Hieber \cite{schmidt2020nonparametric} to prove the minimax bound.}

\appendix
\section{Compatibility}\label{202604101629}
The condition (\ref{202505050251}) implies 
\bea\label{202504260010}
0 &<& \inf_{{\colorro x\in\calx, S\in\bbS}}\lambda_{\min}\big(S^*(x)^{1/2}S(x)^{-1}S^*(x)^{1/2}\big)
\nn\\
&\leq&
\sup_{{\colorro x\in\calx, S\in\bbS}}\lambda_{\max} \big(S^*(x)^{1/2}S(x)^{-1}S^*(x)^{1/2}\big)<\infty. 
\eea

Denote by $\lambda_i$ the $i$-th eigenvalue of the symmetric matrix ${\tt M}:=S^*(x)^{1/2}S(x)^{-1}S^*(x)^{1/2}$ 
for $i=1,...,\sfm$.  
By the condition (\ref{202504260010}), there exist constants $\lambda_-\in(0,1)$ and $\lambda_+\in(1,\infty)$ such that 
\beas 
0\><\>\lambda_-\yleq\min_{i=1,...,\sfm}\lambda_i\yleq\max_{i=1,...,\sfm}\lambda_i\yleq \lambda_+
\eeas
for all $S\in\bbS$ and $x\in\calw$. 
For the constants $\lambda_-$ and $\lambda_+$, 
there exist positive numbers $c_-$ and $c_+$ such that 
\beas 
c_-(y-1)^2\yleq {\colorro\half\big(}y-1-\log y{\colorro\big)}\yleq c_+(y-1)^2\quad(y\in[\lambda_-,\lambda_+]). 
\eeas

Since 
\beas 
\lambda_+^{-2}\sum_{i=1}^\sfm(\lambda_i-1)^2
\yleq
\sum_{i=1}^\sfm(\lambda_i^{-1}-1)^2
\yleq
\lambda_-^{-2}\sum_{i=1}^\sfm(\lambda_i-1)^2,
\eeas
$\sum_{i=1}^\sfm(\lambda_i-1)^2=\text{Tr}\big\{\big({\tt M}-I_\sfm)^2\big\}$ 
and $\sum_{i=1}^\sfm(\lambda_i^{-1}-1)^2=\text{Tr}\big\{\big({\tt M}^{-1}-I_\sfm)^2\big\}=|{\tt M}^{-1}-I_\sfm|^2$ 
for the $\sfm$-dimensional identity matrix $I_\sfm$, 
and 
\beas 
U (x,S)&=&
\half\bigg\{
\big(S(x)^{-1}-S^*(x)^{-1}\big)[S^*(x)]+\log\frac{\det S(x)}{\det S^*(x)}\bigg\}\onecalw(x)
\nn\\&=& 
\half\bigg\{
\text{Tr}\big(S(x)^{-1}S^*(x)\big)-\sfm-\log\det\big(S(x)^{-1}S^*(x)\big)\bigg\}\onecalw(x)
\nn\\&=& 
\half
\sum_{i=1}^\sfm\big(\lambda_i-1-\log\lambda_i\big)\onecalw(x), 
\eeas
we have the compatibility on $\calx$, that is, 
there exists a positive constant $C_*\geq1$ such that 
{\colorro
\bea\label{202505050127} 
C_*^{-1}|S(x)-S^*(x)|^2\onecalw(x)
\yleq
U (x,S)
\yleq
C_*|S(x)-S^*(x)|^2\onecalw(x)
\eea
}
for all $x\in\calx$ and $S\in\bbS$. 

In fact, 
\beas 
U (x,S)
&\leq&
c_+\lambda_+^2|{\tt M}^{-1}-I_\sfm|^2{\colorro\onecalw(x)}
\nn\\&=&
c_+\lambda_+^2|S^*(x)^{-1/2}(S(x)-S^*(x))S^*(x)^{-1/2}|^2{\colorro\onecalw(x)}
\nn\\&\leq&
c_+\lambda_+^2|S^*(x)^{-1/2}|^{\colorro4}|S(x)-S^*(x)|^2{\colorro\onecalw(x)}
\nn\\&\leq&
c_+\lambda_+^2{\colorro\bigg(}\frac{\sfm}{\inf_{x\in{\colorro\calx}}\lambda_{\min}(S^*(x))}{\colorro\bigg)^2}|S(x)-S^*(x)|^2{\colorro\onecalw(x)}
\eeas
due to 
\beas 
|S^*(x)^{-1/2}|^2 
&=& 
\text{tr}\big(S^*(x)^{-1}\big)
\yleq 
\frac{\sfm}{\inf_{x\in{\colorro\calx}}\lambda_{\min}(S^*(x))}, 
\eeas
and also 
\bea\label{202604061409}
U (x,S)
&\geq&
c_-\lambda_-^2|{\tt M}^{-1}-I_\sfm|^2{\colorro\onecalw(x)}
\nn\\&=&
c_-\lambda_-^2|S^*(x)^{-1/2}(S(x)-S^*(x))S^*(x)^{-1/2}|^2{\colorro\onecalw(x)}
\nn\\&\geq&
c_-\lambda_-^2\bigg({\colorro\sfm}\sup_{x\in{\colorro\calx}}\lambda_{\max}S^*(x)\bigg)^{-2}\>\big|S(x)-S^*(x)\big|^2{\colorro\onecalw(x)}.
\eea

\section{Extension of functions}
\begin{lemma}\label{202604121758}
Suppose the following conditions. 
\bd
\im[(i)] $D$ is an open set of $\bbR^\sfd$. 
\im[(ii)]  $W\in C_b^{\ell,\beta-\ell}(\bbR^\sfd)$, $\text{supp }W\subset\ol{D}$. 
\im[(iii)] $f\in C_b^{\ell,\beta-\ell}(D)$.
\ed
Define the function $g:\bbR^\sfd\to\bbR$ as 
\beas 
g(x) 
&=& 
\l\{\begin{array}{cc}
f(x)W(x)&(x\in D)\y
0&(x\in D^c).
\end{array}\r.
\eeas
Then $g\in C_b^{\ell,\beta-\ell}(\bbR^\sfd)$. 
\end{lemma}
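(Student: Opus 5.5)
The plan is to verify directly that $g$ is $\ell$ times continuously differentiable on $\bbR^\sfd$ and that each term in $\|g\|_{C_b^{\ell,\beta-\ell}(\bbR^\sfd)}$ of (\ref{202603031601}) is finite. The key input is that $W$ vanishes to order $\beta$ at $\partial D$, which kills the possible irregularity of $f$ near the boundary.

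\emph{Step 1: the boundary behaviour of $W$.} Since $\text{supp }W\subset\ol{D}$, $W$ and all its derivatives $\partial^kW$, $k\le\ell$, vanish on the open set $(\ol{D})^c$, and by continuity on its closure. I would use the boundary points $z\in\partial D$ that lie in $\ol{(\ol{D})^c}$. The degenerate case of boundary points interior to $\ol{D}$ (a slit) has to be excluded, or handled by requiring $\partial^kW(z)=0$ for $k\le\ell$ on all of $\partial D$. I expect this to be implicit in the intended use, where $D=\calx^o$ with $\ol{\calx^o}=\calx$ and $W=\calw^{1/2}$. At such $z$, Taylor's formula of order $\ell-k$ for $\partial^kW$ with integral remainder, together with the $(\beta-\ell)$-H\"older continuity of $\partial^\ell W$ (the same computation as in Theorem \ref{202510111529a}(a)), gives
\[
|\partial^kW(x)|\le C\|W\|_{C_b^{\ell,\beta-\ell}}\,|x-z|^{\beta-k}\qquad(k=0,\dots,\ell,\ x\in\bbR^\sfd).
\]

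\emph{Step 2: differentiability and the sup-norm terms.} On $D$, $g=fW$ is $C^\ell$, and Leibniz' rule gives $\partial^kg=\sum_{i\le k}\binom{k}{i}\partial^if\,\partial^{k-i}W$, which is bounded by $\|f\|\|W\|$. On $(\ol{D})^c$, $g\equiv0$. At a boundary point $z$, I would show inductively that $\partial^kg(z)$ exists and equals $0$. By Step 1, $|\partial^{k}g(x)|\le C|x-z|^{\beta-k}$ for $x\in D$, and $\beta-k>0$ for $k\le\ell$; hence the difference quotients of $\partial^{k-1}g$ at $z$ tend to $0$ and $\partial^kg$ is continuous at $z$.

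\emph{Step 3: the H\"older seminorm of $\partial^\ell g$, which is the main obstacle.} Take $x\ne y$. If $x,y\in D$, Leibniz plus the bounds on $f$ and $W$ and the elementary product estimate
\[
|ab-a'b'|\le|a||b-b'|+|b'||a-a'|
\]
give
\[
|\partial^\ell g(x)-\partial^\ell g(y)|\le C|x-y|^{\beta-\ell}
\]
when $|x-y|\le1$. For the terms with $i<\ell$, the Lipschitz bound from the next derivative is used. If $|x-y|>1$, boundedness suffices; this works since the seminorm of $f$ on $D$ does not require the segment $[x,y]$ to stay in $D$. If $x\in D$ and $y\notin D$, the segment $[x,y]$ meets $\partial D$ at some $z$ with $|x-z|\le|x-y|$. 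Then $\partial^\ell g(y)=0$ and Step 1 with $k\le\ell$ gives
\[
|\partial^\ell g(x)|\le C\sum_{i}|\partial^{\ell-i}W(x)|\le C|x-z|^{\beta-\ell}\le C|x-y|^{\beta-\ell}.
\]
The care needed is exactly here: the point $z$ must be a boundary point where the vanishing of Step 1 holds, which is where the regularity of $D$ assumed above enters. Both outside gives $0$. Summing the three steps bounds $\|g\|_{C_b^{\ell,\beta-\ell}(\bbR^\sfd)}$ by $C\|f\|_{C_b^{\ell,\beta-\ell}(D)}\|W\|_{C_b^{\ell,\beta-\ell}(\bbR^\sfd)}$, which proves the lemma.
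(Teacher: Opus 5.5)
The paper states this lemma without proof, so your argument has to stand on its own.

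\textbf{The slit caveat.} Your caveat is essential, not cosmetic: as stated, the lemma is false. Take $D=(-1,1)^2\setminus(\{0\}\times[-\frac12,\frac12])$, $f\equiv1$, and $W\in C_c^\infty((-1,1)^2)$ with $W(0)=1$. Then $g=W1_D$ is discontinuous at $0$. The missing hypothesis is $D=\mathrm{Int}\,\overline{D}$, which puts every point of $\partial D$ in the closure of $(\overline{D})^c$. It holds in the application, because $\mathcal{X}^o$ is the interior of a closed set. Under this hypothesis, your Steps~1 and~2 and the mixed case of Step~3 are correct.

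\textbf{The gap.} The gap is the case $x,y\in D$, $|x-y|\le1$, in Step~3. For the Leibniz terms with $i<\ell$ you bound $|\partial^if(x)-\partial^if(y)|$ by $\sup_D|\partial^{i+1}f|\,|x-y|$. The mean value theorem gives this only if $[x,y]\subset D$, and $D$ is not assumed convex. The norm of $C_b^{\ell,\beta-\ell}(D)$ controls the H\"older quotient of $\partial^\ell f$ over all pairs in $D$, but only the suprema of the lower derivatives. Those lower derivatives need not be Euclidean-Lipschitz, for instance near an inward cusp. Take $D=(-1,1)^2\setminus\{x_1\ge0,\ |x_2|\le x_1^2\}$, which satisfies $D=\mathrm{Int}\,\overline D$. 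Let $f=\eta(\theta)|x|^{3/2}$, where $\theta\in(0,2\pi)$ is the polar angle and $\eta$ is smooth with $\eta=1$ on $(0,\pi/2]$ and $\eta=0$ on $[3\pi/2,2\pi)$. Then $f\in C_b^{1,1/4}(D)$, yet $|f(x)-f(y)|/|x-y|\sim s^{-1/2}$ for $x=(s,2s^2)$, $y=(s,-2s^2)$.

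\textbf{The repair.} It uses only your own tools. Steps~1 and~2 give $|\partial^\ell g(u)|\le C\,\mathrm{dist}(u,D^c)^{\beta-\ell}$ whenever $\mathrm{dist}(u,D^c)\le1$. Off $D$ both sides vanish, and for $u\in D$ a nearest point of $D^c$ lies in $\partial D$. If $[x,y]\not\subset D$, then $\mathrm{dist}(x,D^c)$ and $\mathrm{dist}(y,D^c)$ are both at most $|x-y|$, hence
\[
|\partial^\ell g(x)-\partial^\ell g(y)|\le|\partial^\ell g(x)|+|\partial^\ell g(y)|\le 2C|x-y|^{\beta-\ell}.
\]
This also covers your mixed case. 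You need the product estimate only when $[x,y]\subset D$, and there the mean value theorem is legitimate. With this split, and the added hypothesis $D=\mathrm{Int}\,\overline{D}$, your proof is complete.
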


\bibliographystyle{spmpsci}      
\bibliography{bibtex-20260714}   

\end{document}

%% file: nakamacro300823-300916+.tex
\def\koko{{\coloroy{koko}}}
\def\bd{\begin{description}}
\def\ed{\end{description}}

\def\D2{\bbD_{2,\infty-}}

\def\tj{{t_j}}
\def\tjm{{t_{j-1}}}

\def\bb{\bar{B}}

\def\A{{\bf A}}

\def\D{{\bf D}}

\def\M{{\bf M}}

\def\V{{\bf V}}

\def\cala{{\cal A}}
\def\calb{{\cal B}}
\def\calc{{\cal C}}
\def\cald{{\cal D}}
\def\cale{{\cal E}}
\def\calf{{\cal F}}
\def\calg{{\cal G}}

\def\call{{\cal L}}
\def\calm{{\cal M}}
\def\caln{{\cal N}}
\def\calo{{\cal O}}

\def\calr{{\cal R}}
\def\cals{{\cal S}}

\def\calu{{\cal U}}

\def\calw{{\cal W}}
\def\calx{{\cal X}}

\def\yeq{\>=\>}
\def\yleq{\>\leq\>}
\def\ygeq{\>\geq\>}

\def\sfk{{\sf k}}
\def\sfm{{\sf m}}

\def\sfd{{\sf d}}
\def\sfp{{\sf p}}
\def\sfr{{\sf r}}

\def\simleq{\ \raisebox{-.7ex}{$\stackrel{{\textstyle <}}{\sim}$}\ }
\def\geqsim{\ \raisebox{-.7ex}{$\stackrel{{\textstyle <}}{\sim}$}\ }
\def\simgeq{\ \raisebox{-.7ex}{$\stackrel{{\textstyle >}}{\sim}$}\ }
\def\geqsim{\ \raisebox{-.7ex}{$\stackrel{{\textstyle >}}{\sim}$}\ }
\def\ep{\epsilon}
\def\half{\frac{1}{2}}

\def\up{\uparrow}
\def\down{\downarrow}

\def\y{\vspace*{3mm}\\}
\def\halflineskip{\vspace*{3mm}}
\def\nn{\nonumber}
\def\be{\begin{equation}}
\def\ee{\end{equation}}
\def\bea{\begin{eqnarray}}
\def\eea{\end{eqnarray}}
\def\beas{\begin{eqnarray*}}
\def\eeas{\end{eqnarray*}}
\def\bi{\begin{itemize}}
\def\ei{\end{itemize}}
\def\im{\item}
\def\bd{\begin{description}}
\def\ed{\end{description}}
\def\l{\left}
\def\r{\right}

\def\dots{\stackrel{\circ}{S}}
\def\dotc{\stackrel{\circ}{C}}

\newcommand{\bbA}{{\mathbb A}}
\newcommand{\bbB}{{\mathbb B}}
\newcommand{\bbC}{{\mathbb C}}
\newcommand{\bbD}{{\mathbb D}}

\newcommand{\bbF}{{\mathbb F}}

\newcommand{\bbI}{{\mathbb I}}
\newcommand{\bbJ}{{\mathbb J}}
\newcommand{\bbK}{{\mathbb K}}
\newcommand{\bbL}{{\mathbb L}}
\newcommand{\bbM}{{\mathbb M}}
\newcommand{\bbN}{{\mathbb N}}

\newcommand{\bbR}{{\mathbb R}}
\newcommand{\bbS}{{\mathbb S}}
\newcommand{\bbT}{{\mathbb T}}

\newcommand{\bbZ}{{\mathbb Z}}